\documentclass{article}
\usepackage{amsmath}
\usepackage{amssymb}
\usepackage{amsfonts}
\usepackage{graphicx}
\usepackage[utf8]{inputenc} 
\usepackage[english]{babel}
\usepackage{dsfont,hyperref}
\usepackage{stackrel,stmaryrd}
\usepackage{url}
\usepackage{vmargin}
\newcommand{\vertiii}[1]{%
	{\left\vert\kern-0.25ex\left\vert\kern-0.25ex\left\vert
		#1
		\right\vert\kern-0.25ex\right\vert\kern-0.25ex\right\vert}}

\setmarginsrb{2cm}{2cm}{2cm}{1cm}{0cm}{0cm}{0cm}{0cm}

\DeclareMathOperator*{\esssup}{ess\,sup}

\newenvironment{class}[1][(2020) Mathematics Subject Classification]{\textbf{#1.} }{}
\newenvironment{MC}[1][Key words and phrases]{\textbf{#1.} }{}
\newtheorem{theorem}{Theorem}

\newtheorem{corollary}[theorem]{Corollary}

\newtheorem{definition}[theorem]{Definition}
\newtheorem{example}[theorem]{Example}

\newtheorem{lemma}[theorem]{Lemma}

\newtheorem{proposition}[theorem]{Proposition}
\newtheorem{remark}[theorem]{Remark}

\newenvironment{proof}[1][Proof]{\noindent\textbf{#1.} }{\ \rule{0.5em}{0.5em}}

\begin{document}
	
	\title{Generalized reflected BSDEs with irregular obstacles driven by RCLL increasing processes on general filtered space}
	
	\author{
		\textbf{Badr ELMANSOURI}$^a$\footnote{Corresponding author} \\
		\small $^a$Cadi Ayyad University (UCA), \\
		\small National School of Applied Sciences of Marrakech (ENSA-M),\\ 
		\small Avenue Abdelkrim Khattabi, BP 575, Guéliz, Marrakech, 40000, Morocco.\\
		\small Email: \url{b.elmansouri@uca.ac.ma}  \\
		\\
		\textbf{Youssef OUKNINE}$^{b,c}$\\
		\small $^b$Cadi Ayyad University (UCA), Faculty of Sciences Semlalia (FSSM),\\ 
		\small Department of Mathematics, B.P. 2390, Marrakesh, 40000, Morocco.\\
		\small $^c$Mohammed VI Polytechnic University, Africa Business School,\\
		\small Avenue Mohammed Ben Abdellah Regragui,\\
		\small Madinat Al Irfane, BP 6380, Rabat, Morocco.\\
		\small Emails: \url{ouknine@uca.ac.ma} \& \url{youssef.ouknine@um6p.ma} 
	}	
	
	\maketitle
	
	\begin{abstract}
		We study generalized backward stochastic differential equations (GBSDEs) and generalized reflected backward stochastic differential equations (GRBSDEs) on a general filtered probability space satisfying the usual conditions, without assuming that the underlying filtration is quasi-left-continuous. The equations are driven by a prescribed predictable, bounded, nondecreasing RCLL process \(A\), which acts as a possibly discontinuous stochastic clock, which we call a driver. We first establish a priori estimates, stability, existence, and uniqueness results for GBSDEs whose generator is Lipschitz continuous with respect to the state variable. Since \(A\) may have jumps, the analysis is carried out in weighted spaces defined through the stochastic exponential \(\mathcal{E}(\beta A)\). We then investigate GRBSDEs with an optional regulated lower obstacle. When the generator is independent of the state variable, we develop two complementary approaches. The first relies on a Snell-envelope representation and optimal stopping arguments, while the second is based on a modified penalization procedure adapted to the discontinuities of the right jumps of the obstacle. The general Lipschitz case is subsequently obtained through a fixed-point argument in an appropriate weighted Banach space. 
	\end{abstract}
	
	\begin{MC}
		 Generalized reflected BSDEs; regulated obstacle; Snell envelope;\\
		 penalization method; general filtration.
	\end{MC}
	
	\begin{class}
		60H05; 60H20; 60H30.
	\end{class}
	
	\section{Introduction} \label{Intro}
	The theory of backward stochastic differential equations (BSDEs) has been extensively developed over the past decades, owing to its wide range of applications. These include mathematical finance, particularly the pricing and hedging of European contingent claims under the no-arbitrage principle
	\cite{BarrieuElKaroui2005,ElKarouiPengQuenez1997,RougeElKaroui2000},
	stochastic control and differential games
	\cite{HamadeneLepeltier1995,HamadeneLepeltierPeng1997,KaratzasLi2012},
	and the probabilistic representation of solutions to partial differential equations
	\cite{Pardoux1999,PardouxPeng1992}.
	For further developments and applications, we refer the reader, among others, to
	\cite{buckdahn2008stochastic,Carmona2016,Crepey2013BSDE,PapapantoleonPossamaiSaplaouras2023,Perninge2024,QuenezSulem2013,QuenezSulem2014,Touzi2013}.
	
	BSDEs were first introduced by Bismut \cite{bismut1973conjugate} in the linear setting as adjoint equations arising from stochastic versions of Pontryagin's maximum principle in control theory. They were subsequently studied in a nonlinear framework by Pardoux and Peng \cite{PardouxPeng1990}. Let $T\in(0,+\infty)$ be a fixed deterministic terminal time, which may be interpreted as the maturity of a financial contract, and let $(\Omega,\mathcal{F},\mathbb{P})$ be a complete probability space. Given a terminal condition $\xi_T$ and a generator
	\[
	f:\Omega\times[0,T]\times\mathbb{R}\times\mathbb{R}
	\longrightarrow \mathbb{R},
	\]
	a solution of the corresponding BSDE is a pair of stochastic processes
	$(Y,Z)=(Y_t,Z_t)_{t\leq T}$ satisfying
	\begin{equation}\label{intro1}
		Y_t
		=
		\xi_T
		+\int_t^T f(s,Y_s,Z_s)\,ds
		-\int_t^T Z_s\,dB_s,
		\qquad t\in[0,T],
	\end{equation}
	where $B$ is a standard Brownian motion defined on
	$(\Omega,\mathcal{F},\mathbb{P})$, and $(Y,Z)$ is adapted to the augmented natural filtration
	$\mathbb{F}=(\mathcal{F}_t)_{t\leq T}$ generated by $B$.
	
	Motivated by applications such as the pricing and hedging of American options, the classical BSDE framework was extended to reflected backward stochastic differential equations (RBSDEs). In this setting, the state process $Y$ is constrained to remain above a given stochastic process
	$\xi=(\xi_t)_{t\leq T}$, called the lower obstacle or barrier. In financial applications, $\xi$ may represent the payoff process of an American option in a given market model, such as the Black--Scholes model. A classical RBSDE takes the form
	\begin{equation}\label{intro2}
		\left\{
		\begin{aligned}
			\textnormal{(i)}\quad
			&Y_t
			=
			\xi_T
			+\int_t^T f(s,Y_s,Z_s)\,ds
			+(K_T-K_t)
			-\int_t^T Z_s\,dB_s,
			\\
			\textnormal{(ii)}\quad
			&Y_t
			\geq \xi_t,
			\qquad 0\leq t\leq T,\quad \textnormal{a.s.},
			\\
			\textnormal{(iii)}\quad
			&\int_0^T (Y_s-\xi_s)\,dK_s
			=0,
			\qquad \textnormal{a.s.}
		\end{aligned}
		\right.
	\end{equation}
	The reflecting process $K$ is a continuous, nondecreasing process that acts only when the constraint is active. More precisely, the Skorokhod minimality condition \eqref{intro2}--\textnormal{(iii)} means that the Stieltjes measure $dK$ is carried by the contact set
	\[
	\{(\omega,t):Y_t(\omega)=\xi_t(\omega)\}.
	\]
	Thus, $K$ increases only when $Y$ reaches the obstacle and would otherwise tend to move below it, thereby ensuring that the constraint in
	\eqref{intro2}--\textnormal{(ii)} is preserved.
	
	This notion was introduced by El Karoui et al.
	\cite{ElKarouiKapoudjianPardouxPengQuenez1997}. The authors established that, when the terminal condition $\xi_T$ is square-integrable, the generator $f$ is uniformly Lipschitz continuous, and the obstacle $\xi$ is continuous, the RBSDE \eqref{intro2} admits a unique solution. The importance of equations of the form \eqref{intro2} in optimal stopping, stochastic control, and mathematical finance has motivated numerous subsequent extensions. We refer, for instance, to
	\cite{ElKarouiPardouxQuenez1997,EsmaeeliImkeller2018,fuhrman2017reflected,Liang2015,PengXu2010}
	and the references therein.
	
	Several extensions of the classical RBSDE \eqref{intro2} have been developed in the literature. A first line of research consists in weakening the assumptions imposed in
	\cite{ElKarouiKapoudjianPardouxPengQuenez1997}, while retaining the same underlying filtered probability space
	$(\Omega,\mathcal{F},\mathbb{F},\mathbb{P})$. In this direction, Hamadène and Lepeltier
	\cite{HamadeneLepeltier2000} considered obstacles that are right-continuous and left-upper semicontinuous and investigated the connection with zero-sum stochastic games. Matoussi \cite{Matoussi1997} studied the well-posedness of \eqref{intro2} when the generator $f$ is continuous and satisfies a linear-growth condition. Hamadène \cite{Hamadene2002} examined RBSDEs with right-continuous and left-limited (RCLL) obstacles and related the resulting equations to stochastic mixed control problems.
	
	A second line of research concerns the nature of the underlying filtration. Beyond the classical Brownian framework, Hamadène and Ouknine
	\cite{HamadeneOuknine2003} studied RBSDEs in a filtration generated by a Brownian motion and an independent Poisson process. In their setting, the obstacle is RCLL and its jumps are restricted to the totally inaccessible jump times generated by the Poisson component. This framework was subsequently extended in
	\cite{Essaky2008,HamadeneOuknine2016}
	to RCLL obstacles whose jumps may be either predictable or totally inaccessible. Further related contributions include
	\cite{ElOtmani2006,ElOtmani2009,EssakyHassani2011,EssakyHassani2013,EssakyHassaniOuknine2013,RenElOtmani2010}.
	
	More recently, Elmansouri and El Otmani
	\cite{ElmansouriElOtmani2026}
	studied RBSDEs in a more general filtration $\mathbb{F}$ supporting a square-integrable RCLL martingale $N$. In this framework, the reflected equation takes the form
	\begin{equation}\label{intro3}
		\left\{
		\begin{aligned}
			\textnormal{(i)}\quad
			&Y_t
			=
			\xi_T
			+\int_t^T f(s,Y_s,Z_s)\,d\langle N\rangle_s
			+(K_T-K_t)
			-\int_t^T Z_s\,dN_s
			-\int_t^T dR_s,
			\\
			\textnormal{(ii)}\quad
			&Y_t
			\geq \xi_t,
			\qquad 0\leq t\leq T,\quad \textnormal{a.s.},
			\\
			\textnormal{(iii)}\quad
			&\int_0^T
			(Y_{s-}-\xi_{s-})\,dK_s
			=0,
			\qquad \textnormal{a.s.}
		\end{aligned}
		\right.
	\end{equation}
	Here, $\langle N\rangle$ denotes the predictable quadratic variation, or angle-bracket process, of $N$\footnote{Equivalently, $\langle M\rangle$ is the dual predictable projection of the quadratic variation process $[N]$.}, while $R$ is a square-integrable martingale strongly orthogonal to $N$. The additional martingale $R$ arises naturally from the Galtchouk--Kunita--Watanabe decomposition; see, for instance,
	\cite{AnselStricker1993}.
	
	Under the assumption that the filtration $\mathbb{F}$ is quasi-left-continuous, martingales cannot have jumps at predictable stopping times; see, for example,
	\cite[Chapter V, Theorem 36]{he2019semimartingale}. In particular, the predictable quadratic variation process $\langle N\rangle$ is continuous; see
	\cite[p.~186]{he2019semimartingale}. Under this assumption, the authors established the existence and uniqueness of a solution
	$(Y,Z,K,R)$ to \eqref{intro3}. They also applied their results to the pricing of American options in a financial market driven by the Azéma martingale under asymmetric information. This framework therefore extends several previous results obtained in Brownian and Brownian--Poisson settings, whose natural filtrations are quasi-left-continuous, including
	\cite{ElKarouiKapoudjianPardouxPengQuenez1997,Essaky2008,HamadeneLepeltier2000,HamadeneOuknine2016,Matoussi1997,NieRutkowski2021,NieRutkowski2022} among others.
	
	A further important extension of the theory concerns obstacles that are
	not necessarily right-continuous. In a Brownian--Poisson framework,
	Grigorova et al. \cite{Miryana} introduced RBSDEs with a lower obstacle
	$\xi$ that is right-upper semicontinuous but not necessarily
	right-continuous. More precisely, $\xi$ is right-upper semicontinuous
	along stopping times if, for every stopping time $\tau$ and every
	nonincreasing sequence of stopping times
	$(\tau_n)_{n\in\mathbb{N}}$ such that
	$\tau_n\downarrow\tau$ almost surely, one has
	\[
	\xi_\tau
	\geq
	\limsup_{n\rightarrow+\infty}\xi_{\tau_n},
	\qquad \text{a.s.}
	\]
	
	In this setting, the reflecting mechanism involves two nondecreasing
	processes. In addition to the predictable right-continuous process
	controlling the continuous part and the left jumps of the reflection,
	one must introduce a right-continuous, adapted, nondecreasing, purely
	discontinuous process $C$ that accounts for the right jumps of the
	solution. More precisely, the reflected equation contains the term
	$C_{T-}-C_{t-}$, and its jumps satisfy
	\[
	\Delta C_t
	=
	Y_t-Y_{t+}.
	\]
	This additional process ensures that the constraint $Y\geq\xi$ remains
	satisfied when the obstacle is not right-continuous. The authors proved
	existence and uniqueness of the solution and characterized the value
	process of a nonlinear optimal stopping problem in terms of the first
	component of the associated RBSDE. Further contributions concerning
	RBSDEs with obstacles that are not necessarily right-continuous and
	their connections with optimal stopping and stochastic control include
	\cite{AkdimHaddadiOuknine2020,ArharasOuknine2024,BaadiOuknine2017,BaadiOuknine2018,BouhadouHilbertOuknine2022,BouhadouOuknine2021,BouhadouHilbertOuknine2023,
		Elmansouri2025,ElmansouriElOtmani2026OptimalControl,grigorova2020optimal,klimsiak2019reflected,Klimsiak2012,Klimsiak2015,
		KobylanskiQuenez2012,Marzougue2021,MarzougueElOtmani2019}.
	
	In the present work, we study generalized backward stochastic
	differential equations (GBSDEs) and generalized reflected backward
	stochastic differential equations (GRBSDEs) on a general filtered
	probability space
	\[
	(\Omega,\mathcal{F},\mathbb{F},\mathbb{P}),
	\qquad
	\mathbb{F}=(\mathcal{F}_t)_{0\leq t\leq T},
	\]
	where the filtration is fixed but otherwise arbitrary. In particular,
	we do not assume that $\mathbb{F}$ is quasi-left-continuous. We only
	require that it satisfy the usual conditions of right-continuity and
	completeness.
	
	Let $A=(A_t)_{0\leq t\leq T}$ be a given predictable, RCLL,
	nondecreasing process, which plays the role of a stochastic clock, and
	let $\xi_T$ be a terminal condition. Given a generator
	$f(\omega,t,y)$, we first consider the GBSDE associated with the data
	$(\xi_T,f,A)$:
	\begin{equation}\label{intro4}
		Y_t
		=
		\xi_T
		+\int_{t}^T f(s,Y_{s-})\,dA_s
		-\left(M_T-M_t\right),
		\qquad t\in[0,T].
	\end{equation}
	Here, $Y$ is an $\mathbb{F}$-adapted RCLL and $M$ is an RCLL square-integrable
	$\mathbb{F}$-martingale.
	
	The formulation \eqref{intro4} is intrinsic to the underlying
	filtration and does not require the prior specification of a driving
	martingale enjoying a martingale representation property. In
	particular, since the generator depends only on the state variable
	$y$, the martingale component may be treated directly as an unknown
	square-integrable martingale.
	
	Nevertheless, suppose that the filtration $\mathbb{F}$ has the strong
	predictable representation property with respect to a given
	square-integrable martingale $N$; see, for instance,
	\cite[Chapter XIII]{he2019semimartingale}. This means that every
	square-integrable $\mathbb{F}$-martingale $\mathsf{M}$ admits a
	representation of the form
	\[
	\mathsf{M}_t
	=
	\mathsf{M}_0+\int_0^t Z_s\,dN_s,
	\qquad t\in[0,T],
	\]
	for some $\mathbb{F}$-predictable process $Z$ satisfying
	\[
	\mathbb{E}\left[
	\int_0^T |Z_s|^2\,d\langle N\rangle_s
	\right]
	<+\infty.
	\]
	In this case, the martingale component of \eqref{intro4} may be
	represented as an integral with respect to $N$, and the equation becomes
	\begin{equation}\label{intro5}
		Y_t
		=
		\xi_T
		+\int_{t}^T f(s,Y_{s-})\,dA_s
		-\int_{t}^T Z_s\,dN_s,
		\qquad t\in[0,T].
	\end{equation}
	
	Among the works dealing with BSDEs beyond the
	quasi-left-continuous setting and with a discontinuous stochastic
	driver, we mention Bandini \cite{bandini2015existence}. The author
	considered a BSDE driven by an integer-valued random measure
	$\mu(dt,dx)$ on $[0,T]\times E$, where $E$ is a Lusin space and
	$\mathbb{F}$ is the natural filtration of $\mu$\footnote{That is, the
		smallest right-continuous filtration with respect to which $\mu$ is
		optional. Under the assumptions imposed in
		\cite{bandini2015existence}, this filtration satisfies an appropriate
		martingale representation theorem.}. The compensator of $\mu$ is
	assumed to admit the disintegration
	\[
	\nu(dt,dx)
	=
	\phi_t(dx)\,dA_t,
	\]
	where $A$ is a predictable, RCLL, nondecreasing process satisfying
	$A_0=0$. The corresponding BSDE takes the form
	\begin{equation}\label{intro6}
		\begin{aligned}
			Y_t
			={}&
			\xi_T
			+\int_{t}^T
			f\bigl(s,Y_{s-},Z_s(\cdot)\bigr)\,dA_s
			\\
			&-
			\int_{t}^T \int_E
			Z_s(x)\,(\mu-\nu)(ds,dx),
			\qquad t\in[0,T].
		\end{aligned}
	\end{equation}
	
	To establish existence and uniqueness for \eqref{intro6}, Bandini
	imposed an additional technical condition relating the jumps of $A$ to
	the Lipschitz constant of the generator with respect to the state
	variable $y$. More precisely, if $K$ denotes this Lipschitz constant with respect to $y$, it is assumed that there exists $\varepsilon\in(0,1)$ such that
	\begin{equation}\label{intro7}
		2K^2
		\left|\Delta_-A_t\right|^2
		\leq
		1-\varepsilon,
		\qquad
		\mathbb{P}\text{-a.s.},\quad
		t\in[0,T],
	\end{equation}
	where
	\[
	\Delta_-A_t:=A_t-A_{t-}
	\]
	denotes the jump of $A$ at time $t$, with the convention
	$\Delta_-A_0=0$. In that framework, one also has
	$\Delta_-A_t\leq1$. Conditions of this type also appear, in different
	forms and levels of generality, in
	\cite{CohenElliott2012,confortola2016backward,
		possamai2024reflections}.
	
	The relevance of such jump conditions is illustrated by the
	counterexamples in
	\cite[Section 4.3]{confortola2016backward}, which show that a BSDE with
	a discontinuous integrator may fail to admit a solution or may admit
	more than one solution when the corresponding jump-wise solvability
	condition is violated. These difficulties are related to the explicit
	dependence of the generator $f$ on the jump component value $Z_{t}$. 
	For other relevant contributions to BSDEs in general filtrations and related decomposition results, we refer to
	\cite{BouchardPossamaiTan2016,CarboneFerrarioSantacroce2008,CeciCretarolaRusso2014,PapapantoleonPossamaiSaplaouras2018}.

	The study of GBSDE of the form \eqref{intro4} is directly motivated by credit-risk models using progressive enlargements of filtrations. In this setting, one begins with a reference filtration $\mathbb{F}=(\mathcal{F}_t)_{t\leq T}$, representing the information generated by the underlying market factors, and then enlarges it by a larger filtration $\mathbb{G}=(\mathcal{G}_t)_{t\leq T}$, by progressively adding the observation of one or several random times, usually interpreted as default times (see  \cite{JeanblancLi2020} for more study on default times). In \cite{kharroubi2014progressive}, BSDEs with random marked jumps, driven by the enlarged filtration $\mathbb{G}$, are related to a recursive system of BSDEs in the reference filtration $\mathbb{F}$. Their approach is based on a decomposition of $\mathbb{G}$-predictable and $\mathbb{G}$-optional processes into families of $\mathbb{F}$-adapted processes indexed by the successive default times and their marks. Under the density hypothesis imposed there, the compensator of the marked point process is absolutely continuous with respect to the Lebesgue measure and has the form $ \nu(dt,de)=\lambda_t(e)\,de\,dt.$ Thus the effect of the random times enters in the reduced equations through finite-variation terms generated by the compensator. A related reduction procedure is developed in \cite{crepey2016counterparty} in the context of counterparty and funding risk. The valuation problem is first set up as a BSDE in the full filtration $\mathbb{G}$ over a random horizon given by the first default time of the bank or its counterparty. Under suitable assumptions , we reduce the original equation to a BSDE in the smaller filtration $\mathbb{F}$, on the deterministic horizon $T$. The reduced driver captures the essential components of counterparty-risk valuation, such as the default intensity, the close-out exposure, recovery payments, collateral and non-linear funding costs.The reduction is especially important when the market exposure and the default risk are correlated as it is the case for wrong-way risk and gap-risk models. These constructions suggest to replace the deterministic drift $dt$ by a more general predictable stochastic driver $dA_t$. Indeed, it is well known that the compensator is not necessarily absolutely continuous with respect to the Lebesgue measure in a general marked point process model. It may instead permit a disintegration of the form
$
		\nu(dt,dx)=\phi_t(dx)\,dA_t,
$
	where $A$ is a predictable, nondecreasing, RCLL process. After the
	equation is projected or reduced to the reference filtration, the terms arising
	from the compensator are then naturally integrated with respect to $dA_t$. This
	leads precisely to a BSDE of the form \eqref{intro4}. 
	
	To clarify this point, let $\tau$ be a random time that is not necessarily a $\mathbb{F}$-stopping time and let $\mathbb{G}$ be the progressive enlargement of $\mathbb{F}$ with $\tau$. We consider a reduced BSDE in the reference filtration $\mathbb{F}$ containing both an ordinary time component and a term driven by a $\mathbb{F}$-predictable cumulative hazard process $\Gamma$:
	\begin{equation}\label{reduced-bsde-two-clocks}
		Y_t
		=
		\xi_T
		+
		\int_t^T g\bigl(s,Y_{s-}\bigr)\,ds
		+
		\int_t^T h\bigl(s,Y_{s-}\bigr)\,d\Gamma_s
		-
		\bigl(M_T-M_t\bigr),
		\qquad t\in[0,T].
	\end{equation}
	Here, $\xi_T$ is $\mathcal{F}_T$-measurable, $M$ is an
	$\mathbb{F}$-martingale, and $\Gamma$ is an $\mathbb{F}$-predictable,
	nondecreasing finite-variation process with $\Gamma_0=0$.
	
	Equation \eqref{reduced-bsde-two-clocks} can be seen as a generalization of the intensity-based frameworks studied in \cite{crepey2016counterparty,kharroubi2014progressive}. In these environments, the compensators corresponding to the relevant default times or marked point processes are absolutely continuous with respect to Lebesgue measure. So for a single default time the form of the cumulative hazard process is
	\[
	\Gamma_t=\int_0^t \lambda_s\,ds,
	\]
	where $\lambda$ denotes the corresponding intensity process, and hence
	$
	d\Gamma_t=\lambda_t\,dt.
	$
	The formulation in \eqref{reduced-bsde-two-clocks} allows $(\Gamma_t)_{t \leq T}$ to have a
	more general predictable finite-variation structure, including a singular
	continuous component or predictable jumps. We now introduce the common increasing driver
	\[
	A_t=t+\Gamma_t,
	\qquad t\in[0,T].
	\]
	Since both the Lebesgue measure $dt$ and the Stieltjes measure $d\Gamma_t$ are
	absolutely continuous with respect to $dA_t$, there exist nonnegative
	$\mathbb{F}$-predictable processes $a=(a_t)_{t\in[0,T]}$ and
	$b=(b_t)_{t\in[0,T]}$ such that
	\[
	dt=a_t\,dA_t,
	\qquad
	d\Gamma_t=b_t\,dA_t.
	\]
	Equivalently, $a$ and $b$ are the Radon--Nikodym densities
	$
	a_t=\frac{dt}{dA_t},$ and $
	b_t=\frac{d\Gamma_t}{dA_t}.
	$
	Moreover, since
	$
	dA_t=dt+d\Gamma_t,
	$
	we have
	$
	a_t+b_t=1,~ dA_t\text{-a.e.}
	$	Consequently, equation \eqref{reduced-bsde-two-clocks} can be rewritten as
	\[
	Y_t
	=
	\xi_T
	+
	\int_t^T
	\left[
	a_s g\bigl(s,Y_{s-}\bigr)
	+
	b_s h\bigl(s,Y_{s-}\bigr)
	\right]dA_s
	-
	\bigl(M_T-M_t\bigr).
	\]
	Defining
	\[
	f(s,y)
	=
	a_s g(s,y)
	+
	b_s h(s,y),
	\]
	we recover equation \eqref{intro4}, whose well-posedness is established in
	Theorem~\ref{GBSDE-general_thm} below.
	
	This common-driver formulation offers a handy approach to address, all in one equation, both the usual time evolution and the finite-variation effects brought about by the default mechanism. When $\Gamma\equiv0$, we revert to the classical BSDE \eqref{intro1}, where $A_t=t$ and $M_t=\int_{0}^{t}Z_s dB_s$ in the natural filtration of the Brownian motion $B$. In the context of absolutely continuous intensity,
	$$
	\Gamma_t=\int_0^t\lambda_s\,ds,
	$$
	and therefore
     $$
	A_t=t+\int_0^t\lambda_s\,ds,
	\qquad
	dA_t=(1+\lambda_t)\,dt.
	$$
	In this case, one may take
	$$
	a_t=\frac{1}{1+\lambda_t},
	\qquad
	b_t=\frac{\lambda_t}{1+\lambda_t},
	$$
	so that
	$$
	f(t,y)
	=
	\frac{g(t,y)+\lambda_t h(t,y)}
	{1+\lambda_t}.
	$$
	
	Let us also point out that, in a market subject to default, such as the
	one considered in \cite{DumitrescuGrigorovaQuenezSulem2018}, the linear
	pricing of a European contingent claim can be reduced to the study of a
	linear BSDE with a default jump whose generator depends only on the
	state variable $y$. This formulation is discussed in
	\cite[Subsection~5.2.1]{Elmansouri2026BSDEs}; see also
	\cite[Proposition~5.6]{Elmansouri2026BSDEs}. This provides further
	motivation for studying GBSDEs of the form \eqref{intro5} directly in
	the progressive enlargement $\mathbb{G}$ of the reference filtration
	$\mathbb{F}$ with a default time $\tau$. For further contributions on
	the financial applications of BSDEs and RBSDEs with default, we refer the reader
	to
	\cite{AksamitLiRutkowski2023,BlanchetScallietEyraudLoiselRoyerCarenzi2010,DumitrescuQuenezSulem2018,ElmansouriElOtmani2026,
		EyraudLoiselRoyerCarenzi2010,GrigorovaQuenezSulem2020,
		LiangLyonsQian2011}
	and the references therein.
	
	The second main objective of this paper is to study the generalized
	reflected backward stochastic differential equation (GRBSDE)
	associated with the data $(\xi,f,A)$ and given by
	\begin{equation}\label{intro8}
		\left\{
		\begin{aligned}
			\textnormal{(i)}\quad
			&Y_t
			=
			\xi_T
			+\int_{t}^T f(s,Y_{s-})\,dA_s
			+(K_T-K_t)
			-(M_T-M_t),
			\\
			\textnormal{(ii)}\quad
			&Y_t
			\geq \xi_t,
			\qquad 0\leq t\leq T,\quad \textnormal{a.s.},
			\\
			\textnormal{(iii)}\quad
			&\int_0^T
			\left(Y_{s-}-\xi_{s-}\right)\,dK_s^{\ast}
			+
			\sum_{0\leq s<T}
			\left(Y_s-\xi_s\right)\Delta_+K_s
			=0,
			\qquad \textnormal{a.s.}
		\end{aligned}
		\right.
	\end{equation}
	Here, $\xi$ is the lower obstacle, while $K$ is the reflecting process
	satisfying the Skorokhod minimality condition
	\eqref{intro8}--\textnormal{(iii)}. This condition ensures that the
	constraint \eqref{intro8}--\textnormal{(ii)} is maintained and that
	$K$ acts only when the solution is in contact with the obstacle.
	
	Throughout the paper, $A=(A_t)_{0\leq t\leq T}$ is a prescribed
	predictable, RCLL, nondecreasing process, whereas $\xi$ is an
	$\mathbb{F}$-optional regulated process. Recall that a regulated
	process has finite left and right limits at every point of $[0,T]$;
	see Definition \ref{regulated process} below. This irregularity of the
	obstacle naturally leads to regulated state and reflecting processes
	$Y$ and $K$. In particular, the right jumps of $K$ control the right
	jumps of $Y$ through the relation
	\[
	\Delta_+Y_t=-\Delta_+K_t,
	\qquad 0\leq t<T.
	\]
	
	The GRBSDE \eqref{intro8} contains, at the level of its intrinsic
	martingale formulation, several previously studied models as special
	cases. Indeed, when $A$ is continuous and $\xi$ is RCLL, the
	right-jump component of the reflecting process vanishes, that is,
	$\Delta_+K=0$; see, for instance,
	\cite[Remark~3.3]{Miryana}. Moreover, by taking
	\[
	A=\langle N\rangle
	\]
	and identifying the martingale component in \eqref{intro8} with
	\[
	\int_0^\cdot Z_s\,dN_s+R,
	\]
	one recovers a formulation of the type \eqref{intro3} when the
	generator is independent of the martingale integrand. Under
	quasi-left-continuity of the filtration and continuity of
	$\langle N\rangle$, the corresponding framework may also accommodate
	generators depending on the $z$-variable.
	
	Among the closest related contributions, we mention
	\cite{foresta2021optimal} and \cite{li2024penalization}, where more
	general forms of reflected equations are considered, but under the
	assumption that the increasing stochastic driver $A$ is continuous. In
	\cite{foresta2021optimal}, the author studies equations driven by a
	continuous increasing process associated with a point process, in a
	filtration generated by a marked point process on a Borel space and a
	Brownian motion. More recently, Li et al.
	\cite{li2024penalization} considered generalized reflected BSDEs on a
	general filtered probability space, again assuming that the process
	$A$ is continuous. Although continuity eliminates several technical
	difficulties caused by the jumps of the stochastic driver, the resulting
	problem remains highly nontrivial. In the present work, we allow $A$ to
	be a general predictable RCLL nondecreasing process.
	
	Our proof of existence and uniqueness is divided into two main steps.
	We first consider a generator that is independent of the state
	variable $y$. In this case, we develop two complementary approaches.
	The first relies on the Snell envelope and the theory of optimal
	stopping, whereas the second is based on a modified penalization
	procedure that explicitly takes into account the right jumps of the
	obstacle. Our approach therefore differs from that of
	\cite{li2024penalization}, where a classical penalization scheme is
	used in connection with an associated optimal stopping problem.
	
	As in \cite{li2024penalization}, we assume that the stochastic driver
	$A$ is bounded; see Assumption \textbf{(H3)} and Example
	\ref{exemple}, where several relevant situations satisfying this
	condition are presented. The arguments may also be adapted to certain
	square-integrable, but not necessarily bounded, processes $A$, provided
	that additional structural assumptions are imposed on the generator.
	This includes, in particular, generators of class \textnormal{(P)} in
	the sense of \cite[Definition~3.13]{li2024penalization}. We do not
	pursue this extension in the present paper.
	
	Let us explain why a direct application of the classical penalization
	method is problematic when the stochastic driver $A$ has jumps.
	Consider, for simplicity, an RCLL obstacle $\xi$ and a BSDE driven by
	an integer-valued random measure. If the generator also depends on a
	martingale integrand $z$, one is naturally led to consider a penalized
	equation of the form\footnote{As in the work by Fakhouri and Ouknine \cite{fakhouri20192}.}
	\begin{equation}\label{intro9}
		\begin{aligned}
			Y_t^n
			={}&
			\xi_T
			+\int_{(t,T]}
			\left\{
			f\bigl(s,Y_{s-}^n,Z_s^n(\cdot)\bigr)
			+n\bigl(Y_{s-}^n-\xi_{s-}\bigr)^-
			\right\}\,dA_s
			\\
			&-
			\int_{(t,T]}\int_E
			Z_s^n(x)\,(\mu-\nu)(ds,dx),
			\qquad n\geq1,\quad t\in[0,T].
		\end{aligned}
	\end{equation}
	Suppose that $f$ is Lipschitz continuous with respect to $y$, with
	Lipschitz constant $K$. The penalized generator in
	\eqref{intro9} then has a Lipschitz constant with respect to $y$ that
	is bounded by $K+n$. Consequently, applying a jump condition of the
	type used in
	\cite{bandini2015existence,confortola2016backward}
	would require the existence of $\varepsilon\in(0,1)$ such that
	\begin{equation}\label{intro10}
		2(K+n)^2
		\left|\Delta_-A_t\right|^2
		\leq
		1-\varepsilon,
		\qquad
		\mathbb{P}\text{-a.s.},\quad
		t\in[0,T].
	\end{equation}
	Equivalently,
	\[
	\left|\Delta_-A_t\right|
	\leq
	\frac{\sqrt{1-\varepsilon}}
	{\sqrt{2}\,(K+n)},
	\qquad
	\mathbb{P}\text{-a.s.},\quad
	t\in[0,T].
	\]
	If such an estimate were required for every $n\geq1$, then letting
	$n\rightarrow+\infty$ would yield
	\[
	\Delta_-A_t=0,
	\qquad
	\mathbb{P}\text{-a.s.},\quad t\in[0,T],
	\]
	and hence $A$ would necessarily be continuous. Such a conclusion is
	incompatible with the general framework considered here. For example,
	when $T>1$, one may take
	\[
	A_t:=\mathds{1}_{\{t\geq1\}},
	\qquad t\in[0,T],
	\]
	which is a predictable, bounded, nondecreasing, purely discontinuous
	RCLL process.
	
	This observation shows that the classical penalization scheme cannot
	be combined directly with an $n$-dependent Lipschitz condition of the
	form \eqref{intro10} when $A$ has nonzero jumps. It therefore motivates
	the modified approximation procedure developed in this paper. In the
	case of a generator independent of the state variable, we construct a
	penalization scheme adapted to the discontinuous stochastic driver and
	to the right jumps of the obstacle. Under the assumptions introduced
	below, this scheme avoids the $n$-dependent jump restriction appearing
	in \eqref{intro10}. The general Lipschitz case, in which the generator
	depends on $y$, is subsequently treated by means of a fixed-point
	argument in an appropriate Banach space.
	
	Another important feature of our analysis is the choice of the
	weighted spaces. Since $A$ is only RCLL, we work with weights defined
	through the stochastic exponential
	\[
	\mathcal{E}(\beta A),
	\]
	which is the solution of the forward equation
	\[
	d\mathcal{E}(\beta A)_t
	=
	\beta\mathcal{E}(\beta A)_{t-}\,dA_t,
	\qquad
	\mathcal{E}(\beta A)_0=1.
	\]
	When $A$ is continuous, this stochastic exponential reduces to the
	ordinary exponential $e^{\beta A}$. By contrast, when $A$ has jumps,
	the stochastic exponential incorporates the jump factors
	$1+\beta\Delta_-A$. This constitutes a significant difference from
	the framework of \cite{li2024penalization}, where the continuity of
	$A$ allows the authors to work with a classical exponential weight.
	
	The remainder of the paper is organized as follows. Section
	\ref{sec2} is devoted to the preliminary material, the main
	assumptions, and the auxiliary results needed throughout the paper. In
	Section \ref{sec3}, we study GBSDEs of the form \eqref{intro4} under a
	Lipschitz condition on the generator and suitable square-integrability
	assumptions on the data. Finally, Section \ref{sec4} is devoted to the
	well-posedness of the GRBSDE \eqref{intro8}. We first address the case
	of a generator independent of the state variable and establish a
	characterization in terms of a Snell envelope and an associated optimal
	stopping problem. We then introduce a modified penalization
	approximation, combining the results of Section \ref{sec3} with
	optimal stopping techniques. The general case of a generator depending
	on $y$ is obtained by applying a fixed-point argument in an appropriate
	weighted Banach space.

	\section{Preliminaries}\label{sec2}
	
	Let $T\in(0,+\infty)$ be a fixed deterministic time horizon. We work on
	a complete probability space $(\Omega,\mathcal{F},\mathbb{P})$ endowed
	with a filtration
	\[
	\mathbb{F}:=(\mathcal{F}_t)_{0\leq t\leq T}
	\]
	satisfying the usual conditions of right-continuity and completeness.
	Let $A$ be a predictable, RCLL, nondecreasing process such that
	$A_0=0$. We denote by $\mathcal{P}$ and $\mathbf{Prog}$ the predictable
	and progressive $\sigma$-fields on $\Omega\times[0,T]$, respectively.
	
	For $0\leq s<t\leq T$, the notation $\int_s^t$ is understood as
	$\int_{(s,t]}$, both for Stieltjes-type integrals and for It\^o
	integrals of predictable processes with respect to RCLL real-valued
	martingales.
	
	We denote by $\mathcal{T}_{[0,T]}$ the set of all
	$\mathbb{F}$-stopping times taking values in $[0,T]$. For
	$\eta_1,\eta_2\in\mathcal{T}_{[0,T]}$ such that
	$\eta_1\leq\eta_2$ almost surely, we set
	\[
	\mathcal{T}_{[\eta_1,\eta_2]}
	:=
	\left\{
	\tau\in\mathcal{T}_{[0,T]}:
	\eta_1\leq\tau\leq\eta_2
	\ \textnormal{a.s.}
	\right\}.
	\]
	
	For brevity, we suppress the dependence on $\omega$ of generators and
	other random coefficients whenever no confusion may arise. Unless
	otherwise stated, equalities and inequalities between stochastic
	processes are understood up to evanescence.
	
	We begin by recalling several definitions that will be used throughout
	the paper.
	
	\begin{definition}[Strong supermartingale]\label{o11}
		An $\mathbb{F}$-optional process $\mathsf{Y}$ is called a
		\emph{strong supermartingale} if $\mathsf{Y}_\tau$ is integrable for
		every $\tau\in\mathcal{T}_{[0,T]}$ and
		\[
		\mathsf{Y}_\eta
		\geq
		\mathbb{E}\left[
		\mathsf{Y}_\tau
		\,\middle|\,
		\mathcal{F}_\eta
		\right]
		\qquad\textnormal{a.s.}
		\]
		for every $\eta,\tau\in\mathcal{T}_{[0,T]}$ satisfying
		$\eta\leq\tau$ almost surely. Here,
		\[
		\mathcal{F}_\eta
		:=
		\left\{
		B\in\mathcal{F}:
		B\cap\{\eta\leq t\}\in\mathcal{F}_t
		\textnormal{ for every }t\in[0,T]
		\right\}.
		\]
	\end{definition}
	
	Definition \ref{o11} is taken from
	\cite[p.~407]{DellacherieMeyer1980}.
	
	\begin{definition}[Snell envelope]\label{o22}
		Let $\xi$ be an $\mathbb{F}$-optional process. An optional process
		$\mathsf{Y}$ is called the \emph{Snell envelope} of $\xi$ if the
		following conditions hold:
		\begin{enumerate}
			\item $\mathsf{Y}$ is a strong supermartingale;
			
			\item $\mathsf{Y}$ dominates $\xi$, that is,
			\[
			\mathsf{Y}\geq\xi;
			\]
			
			\item $\mathsf{Y}$ is the smallest strong supermartingale
			dominating $\xi$. More precisely, if
			$\widehat{\mathsf{Y}}$ is any strong supermartingale satisfying
			$\widehat{\mathsf{Y}}\geq\xi$, then
			\[
			\widehat{\mathsf{Y}}\geq\mathsf{Y}.
			\]
		\end{enumerate}
	\end{definition}
	
	Definition \ref{o22} is taken from
	\cite[p.~113]{ElKaroui1981}.
	
	\begin{definition}[Regulated processes]\label{regulated process}
		\begin{itemize}
			\item A process
			\[
			\mathcal{X}:\Omega\times[0,T]\longrightarrow\mathbb{R}
			\]
			is called a \emph{regulated process} if, for
			$\mathbb{P}$-almost every $\omega\in\Omega$, the mapping
			\[
			t\longmapsto\mathcal{X}_t(\omega)
			\]
			admits a finite right limit at every $t\in[0,T)$ and a finite
			left limit at every $t\in(0,T]$.
			
			\item Let $\mathcal{X}$ be a process with regulated trajectories.
			For $s\in(0,T]$, we define its left limit and left jump by
			\[
			\mathcal{X}_{s-}
			:=
			\lim_{u\uparrow s}\mathcal{X}_u,
			\qquad
			\Delta_-\mathcal{X}_s
			:=
			\mathcal{X}_s-\mathcal{X}_{s-},
			\]
			with the convention $\Delta_-\mathcal{X}_0=0$.
			
			For $s\in[0,T)$, we define its right limit and right jump by
			\[
			\mathcal{X}_{s+}
			:=
			\lim_{u\downarrow s}\mathcal{X}_u,
			\qquad
			\Delta_+\mathcal{X}_s
			:=
			\mathcal{X}_{s+}-\mathcal{X}_s,
			\]
			with the convention $\Delta_+\mathcal{X}_T=0$.
			
			The trajectories of a regulated process have at most countably
			many discontinuities; see
			\cite[Corollary~II.2.2]{DudleyNorvaisa2011}.
			
			\item Let
			\[
			\mathsf{K}:\Omega\times[0,T]\longrightarrow\mathbb{R}
			\]
			be a regulated process of finite variation. Then $\mathsf{K}$
			admits the decomposition
			\[
			\mathsf{K}
			=
			\mathsf{K}^c+\mathsf{K}^d+\mathsf{K}^g,
			\]
			where $\mathsf{K}^c$ is continuous, the càdlàg pure-jump process
			$\mathsf{K}^d$ is defined by
			\[
			\mathsf{K}^d_t
			:=
			\sum_{0<s\leq t}\Delta_-\mathsf{K}_s,
			\qquad t\in[0,T],
			\]
			and the càglàd pure-jump process $\mathsf{K}^g$ is defined by
			\[
			\mathsf{K}^g_t
			:=
			\sum_{0\leq s<t}\Delta_+\mathsf{K}_s,
			\qquad t\in[0,T].
			\]
			The above sums converge absolutely along each trajectory since
			$\mathsf{K}$ has finite variation.
			
			In particular,
			\[
			\mathsf{K}_t
			=
			\mathsf{K}^{\ast}_t
			+
			\sum_{0\leq s<t}\Delta_+\mathsf{K}_s,
			\qquad t\in[0,T],
			\]
			where
			\[
			\mathsf{K}^{\ast}
			:=
			\mathsf{K}-\mathsf{K}^g
			=
			\mathsf{K}^c+\mathsf{K}^d
			\]
			is the càdlàg, or right-continuous, part of $\mathsf{K}$.
			The process $\mathsf{K}^g$ is the left-continuous pure-jump part
			of $\mathsf{K}$.
		\end{itemize}
	\end{definition}
	
	Definition \ref{regulated process} is adapted from
	\cite[Definition~1.4]{Galchuk1982}.
	
	\begin{remark}
		Let $\xi=\left(\xi_t\right)_{t\leq T}$ be an $\mathbb{F}$-optional process with regulated trajectories. Then, by Theorem 1.14 in \cite{gal1981optional}, there exists a sequence of stopping times $\left\{\sigma_n\right\}_{n\in\mathbb{N}}$ whose graphs cover all right-jump times of $\xi$. More precisely,
		\begin{equation*}
			\left\{(\omega,t)\in\Omega\times[0,T):\Delta_{+}\xi_t(\omega)\neq 0\right\}
			=
			\bigcup_{n\in\mathbb{N}}\llbracket \theta_n\rrbracket,
		\end{equation*}
		where
		$
		\llbracket \sigma_n\rrbracket
		=
		\left\{(\omega,t)\in\Omega\times[0,T]:\sigma_n(\omega)=t\right\}.
		$
		\label{existence}
	\end{remark}
	
	For a given RCLL semimartingale $\mathcal{X}$, we denote by
	$\mathcal{E}(\mathcal{X}):=\left(\mathcal{E}(\mathcal{X})_t\right)_{t \leq T}$
	the Doléans--Dade stochastic exponential of $\mathcal{X}$.
	
	Let $\beta \geq 0$. By Theorem 37 in \cite[p.~84]{Protter2004}, 
	$\mathcal{E}(\beta A)$ is the unique right-continuous adapted process satisfying
	$$
	\mathcal{E}(\beta A)_t
	=
	1+\beta \int_{0}^{t}\mathcal{E}(\beta A)_{s-}\,dA_s,
	\quad t \in [0,T], \quad \text{a.s.}
	$$
	Since $A$ is of finite variation, the stochastic exponential admits the following closed-form representation:
	$$
	\mathcal{E}(\beta A)_t
	=
	e^{\beta A_t}
	\prod_{0<s \leq t}
	\left(1+\beta\Delta A_s\right)e^{-\beta\Delta A_s},
	\quad t \in [0,T], \quad \text{a.s.}
	$$
	
	\begin{remark}\label{rmq4}
		\begin{itemize}
		\item[(i)] Since $\beta \Delta A_t \geq 0$ for all $t \in [0,T]$, a.s., and for every $\beta \geq 0$, we have
		$\mathcal{E}(\beta A)_t \geq 1$ for all $t \in [0,T]$, a.s. Moreover, using
		$$
		\Delta \mathcal{E}(\beta A)_{t}
		=
		\beta \mathcal{E}(\beta A)_{t-}\Delta A_t,
		$$
		we obtain
		$$
		\mathcal{E}(\beta A)_{t-}
		\leq
		\mathcal{E}(\beta A)_{t},
		\quad t \in [0,T], \quad \text{a.s.}
		$$
		Thus, $\mathcal{E}(\beta A)$ is an RCLL predictable\footnote{A consequence of  Proposition 3.5 in \cite[p. 28]{JacodShiryaev2013}.} increasing process. In particular, it is locally bounded\footnote{Let
			$\tau_n:=\inf\{t\in(0,T]: \mathcal{E}(\beta A)_t\ge n\}$. Since $\mathcal{E}(\beta A)$ is increasing, we have
			$\mathcal{E}(\beta A)_t\le n$ on $[0,\tau_n)$, and $\tau_n \nearrow T$. Hence $\mathcal{E}(\beta A)$ is locally bounded. Consequently, for any RCLL optional increasing process $\Gamma$, we have
			$\int_{0}^{\tau_n}\mathcal{E}(\beta A)_s\,d\Gamma_s \le n\int_{0}^{\tau_n} d\Gamma_s$, up to the usual convention at the terminal jump.}. Furthermore, applying the change-of-variables formula, see Theorem 31 in \cite[p.~78]{Protter2004}, yields
		\begin{equation}\label{promis}
			\begin{split}
				\frac{1}{\mathcal{E}(\beta A)_t}
				=
				1
				-\beta\int_{0}^{t}
				\frac{1}{\mathcal{E}(\beta A)_{s-}}\,dA_s
				+\sum_{0<s\leq t}
				\left\{
				\frac{1}{\mathcal{E}(\beta A)_{s}}
				-\frac{1}{\mathcal{E}(\beta A)_{s-}}
				+\beta \frac{\Delta A_s}{\mathcal{E}(\beta A)_{s-}}
				\right\}.
			\end{split}
		\end{equation}
		Since
		$$
		\mathcal{E}(\beta A)_{s}
		=
		\mathcal{E}(\beta A)_{s-}\left(1+\beta \Delta A_s\right),
		$$
		or equivalently
		\begin{equation}\label{matla}
		\mathcal{E}(\beta A)_{s-}
		=
		\frac{\mathcal{E}(\beta A)_{s}}{1+\beta \Delta A_s},
		\end{equation}
		we get
		$$
		\frac{1}{\mathcal{E}(\beta A)_{s}}
		-\frac{1}{\mathcal{E}(\beta A)_{s-}}
		+\beta \frac{\Delta A_s}{\mathcal{E}(\beta A)_{s-}}
		=
		\beta^2
		\frac{1}{\mathcal{E}(\beta A)_{s}}
		\left(\Delta A_s\right)^2.
		$$
		Moreover,
		$$
		\beta
		\frac{1}{\mathcal{E}(\beta A)_{s-}}\,dA_s
		=
		\beta
		\frac{1}{\mathcal{E}(\beta A)_{s}}\,dA_s
		+
		\beta^2
		\frac{1}{\mathcal{E}(\beta A)_{s}}
		\left(\Delta A_s\right)^2.
		$$
		Substituting these identities into \eqref{promis}, we obtain
		\begin{equation}\label{code}
			\int_{0}^{t}
			\frac{1}{\mathcal{E}(\beta A)_{s}} \, dA_s
			=
			\frac{1}{\beta}
			\left(
			1-\frac{1}{\mathcal{E}(\beta A)_t}
			\right)
			\leq
			\frac{1}{\beta},
			\quad t \in [0,T], \quad \text{a.s.}
		\end{equation}
		

\item[(ii)]  Let $V$ be an RCLL predictable process with integrable variation. Then, for any stopping time $\tau \in \mathcal{T}_{0,T}$, we have $V_\tau \in \mathcal{F}_{\tau-}$, see, for instance, Corollary 3.23 in \cite[p. 91]{he2019semimartingale}; in particular, $\Delta_- V_\tau$ is $\mathcal{F}_{\tau-}$-measurable, and hence $\mathbb{E}\left[\Delta_- V_\tau \mid \mathcal{F}_{\tau-}\right]=\Delta_- V_\tau$. Moreover, if $M$ is an RCLL martingale, then, for every predictable stopping time $\tau \in \mathcal{T}^{p}_{0,T}$, we have $\mathbb{E}\left[\Delta_- M_\tau \mid \mathcal{F}_{\tau-}\right]=0$, see, for instance, Chapter I, Lemma (1.21) in \cite{Jacod1979}. Therefore, since $\Delta_- V_\tau$ is $\mathcal{F}_{\tau-}$-measurable, it follows that $\mathbb{E}\left[\Delta_- V_\tau \Delta_- M_\tau \mid \mathcal{F}_{\tau-}\right]=\Delta_- V_\tau \mathbb{E}\left[\Delta_- M_\tau \mid \mathcal{F}_{\tau-}\right]=0$. Consequently, by summing over the predictable jump times of $V$, and assuming that the series $\sum_{0<s\leq T}\Delta_- V_s \Delta_- M_s$ is integrable, we obtain $\mathbb{E}\left[\sum_{0<s\leq T}\Delta_- V_s \Delta_- M_s\right]=0$.
		\end{itemize}
	\end{remark}

	\paragraph{Assumptions on the data $(\xi,f,A)$:}
We assume that the data $(\xi,f,A)$ of the RGBSDE \eqref{basic equation} satisfy the following conditions. Let $\beta>0$.
\begin{itemize}
	\item[\textbf{(H1)}] The obstacle $\xi=(\xi_t)_{t \leq T}$ is an  $\mathbb{R}$-valued, $\mathbb{F}$-optional, regulated process such that
	$$
	\mathbb{E}\left[\esssup_{\tau \in \mathcal{T}_{[0,T]}}| \mathcal{E}(\beta A)_\tau  \xi_\tau|^2\right]<+\infty.
	$$
	
	\item[\textbf{(H2)}] The generator $f : \Omega \times [0,T] \times \mathbb{R} \rightarrow \mathbb{R}$ satisfies:
	\begin{itemize}
		\item[(i)] The random field
		$
		(\omega,t,y)\longmapsto f(\omega,t,y)
		$
		is
		$
		\mathcal{P}\otimes\mathcal{B}(\mathbb{R})
		\big/
		\mathcal{B}(\mathbb{R})\text{-measurable}.
		$
		\footnote{This measurability assumption is in the spirit of the conditions imposed by Bandini \cite[Assumption~(i)]{bandini2015existence} and Confortola et al. \cite[Condition~(2.8)]{confortola2016backward}. In these works, two classes of equations are considered in the filtration generated by a point process, without assuming quasi-left-continuity, and the corresponding generators satisfy a similar measurability property; see also the discussion on pp.~1747--1748 of \cite{confortola2016backward}. In the classical framework, where the process appearing as the integrator in the Lebesgue--Stieltjes term is continuous and the underlying filtration is quasi-left-continuous, we refer to Pardoux \cite{Pardoux1997} and El Otmani \cite{ElOtmani2006}; see also \cite{elmansouri2023generalized,elmansouri2024generalized}.}
		
		\item[(ii)] There exist a positive constant $K>0$ such that
		$$
		\left|f(\omega,t,y)-f(\omega,t,y')\right| \leq K |y-y'|
		$$
		for all $(\omega,t) \in \Omega$, $y,y' \in \mathbb{R}^2$.
		\item[(iii)] $\mathbb{E}\left[\int_{0}^{T}\mathcal{E}(\beta A)_s |f(s,0)|^2 dA_s\right]<+\infty$.
		
	\end{itemize}
	
\item[\textbf{(H3)}] The process $A$ is bounded, namely there exists a deterministic constant $\mathfrak{C}_A>0$ such that $A_T\leq \mathfrak{C}_A$ {a.s.}

\end{itemize}

\begin{example}\label{exemple}
	There are many examples of RCLL predictable nondecreasing processes with bounded jumps, i.e satisfying assumption \textbf{\textnormal{\textbf{(H3)}}}.
	\begin{enumerate}
		\item A simple deterministic example is
		$$
		A_t:=\mathds{1}_{\{t\geq q\}}, \qquad t\in[0,T],
		$$
		with $q \in [0,T]$ a fixed deterministic time.
		
		\item Any deterministic nondecreasing
		RCLL function defines a predictable process. For instance, by fixing some $N \in \mathbb{N}^\ast$, one may take
		$$
		A_t=t+\mathds{1}_{\{t\geq q\}}, \qquad t\in[0,T],
		$$
		or
		$$
		A_t=C_t+\sum_{n= 1}^N a_n\mathds{1}_{\{t\geq t_n\}},
		\qquad t\in[0,T],
		$$
		where $C$ is a nonnegative deterministic continuous function, $0<t_1<t_2<\cdots\leq T$, and $0 \leq a_n \leq \Lambda$ for some $\Lambda>0$. In this case $A$ is RCLL, predictable,
		nondecreasing, and bounded by $\sup_{0\leq t\leq T} C_t+ N\Lambda$.

		\item Another standard class is given by
		$$
		A_t=\int_0^t C_s\,ds+\sum_{n= 1}^N \eta_n\mathds{1}_{\{t\geq \tau_n\}} ,
		\quad t\in[0,T],\quad N \in \mathbb{N}^\ast,
		$$
		where $C$ is a nonnegative bounded predictable process, $(\tau_n)_{n\geq1}$
		are predictable stopping times, and $\eta_n\geq 0$ are 
		$\mathcal F_{\tau_n-}$-measurable bounded random variables. Under this consideration, 
		the process $A$ is again RCLL, predictable, nondecreasing and 
		bounded.
	\end{enumerate}

	Finally, from assumption \textnormal{\textbf{(H3)}}, we derive that for any $\beta>0$
	\begin{equation}\label{essential condition-r1}
		\frac{1}{1+ \beta \mathfrak{C}_A }\leq \frac{1}{1+\beta \Delta_- A_s} \leq 1
	\end{equation}
\end{example}

	\begin{remark}\label{rmq2}
		\begin{enumerate}
	\item Let $Y$ be any given regulated process. 
	Using the left-continuity of the process $\left(\mathcal{E}(\beta A)_{t-} Y_{t-}\right)_{t \in [0,T]}$ and \eqref{matla} along with assumption \textbf{(H3)}, we get
	\begin{equation*}
		\begin{split}
			\sup_{s \in [0,T]}  \mathcal{E}(\beta A)_s\left|Y_{s-} \right|^2 \leq (1+\beta \mathfrak{C}_A) \sup_{s \in [0,T]}  \mathcal{E}(\beta A)_{s-}\left|Y_{s-} \right|^2
			& = (1+\beta \mathfrak{C}_A)\sup_{s \in \mathbb{Q} \cap [0,T]}  \mathcal{E}(\beta A)_{s-}\left|Y_{s-} \right|^2\\ 
			&\leq (1+\beta \mathfrak{C}_A)\esssup_{\tau \in \mathcal{T}_{[0,T]}} \mathcal{E}(\beta A)_\tau\left|Y_{\tau} \right|^2 \quad \mathbb{P}\textnormal{-a.s.}
		\end{split}
	\end{equation*}
	
	\item Using the closed formula for stochastic exponential (see, e.g., \cite[Ch II. Theorem 37 ]{Protter2004}) and  $A_t=A^c_t+\sum_{0 < s \leq t} \Delta_-A_s$ (see, e.g., \cite[p. 99]{he2019semimartingale}), we can write
	\begin{equation*}
		\begin{split}
			\mathcal{E}(\beta A)_t&=\exp\left\{\beta A_t\right\}\prod_{0 < s \leq t}\left(1+\beta \Delta_-A_s\right)\exp\left\{-\beta \Delta_-A_s\right\}\\
			&=\exp\left\{\beta A^c_t+\beta\sum_{0 < s \leq t}\Delta_-A_s\right\}\prod_{0 < s \leq t}\left(1+\beta \Delta_-A_s\right)\exp\left\{-\beta \Delta_-A_s\right\}\\
			&=\exp\left\{\beta A^c_t\right\}\prod_{0 < s \leq t}\left(1+\beta \Delta_-A_s\right)
		\end{split}
	\end{equation*}
Using 
$$
1+x \leq e^x,\quad \forall x \in \mathbb{R}^+,
$$
we get
$$
\prod_{0 < s \leq t}\left(1+\beta \Delta_-A_s\right) \leq \prod_{0 < s \leq t} \exp\left\{\Delta_-A_s\right\}=\exp\left\{-\beta \sum_{0 < s \leq t} \Delta_- A_s\right\}
$$
Therefore, from assumption \textbf{(H3)}, we derive
\begin{equation*}
	\mathcal{E}(\beta A)_t \leq \exp\left\{\beta A_t\right\} \leq e^{\beta \mathfrak{C}_A}\quad \text{ a.s.}
\end{equation*}
In particular,
\begin{equation}\label{bounded_exp_A}
	\sup_{0\leq t\leq T} 	\mathcal{E}(\beta A)_t \leq \exp\left\{\beta A_t\right\} \leq e^{\beta \mathfrak{C}_A},\quad \text{ a.s.}
\end{equation}
Henceforth, since in addition $\inf_{0\leq t\leq T} 	\mathcal{E}(\beta A)_t \geq 1$, we can deduce that, the norms $
\|\cdot\|_{\mathcal{S}^2_\beta}
$ and $\|\cdot\|_{\mathcal{S}^2}
$ are equivalent.
		\end{enumerate}
\end{remark}

\paragraph*{Terminology.}
The conditions \textbf{(H1)}, \textbf{(H2)} and \textbf{(H3)} will be collectively denoted by \textbf{(H)}. Thus, when we say that a triplet of data $(\xi,f,A)$ satisfies assumption \textbf{(H)}, this means that $\xi$ satisfies \textbf{(H1)}, the generator $f$ satisfies \textbf{(H2)}, and the increasing process $A$ (driver) satisfies \textbf{(H3)}.

In the case of a classical generalized BSDE, see \eqref{GBSDE} below, when we say that a triplet $(\xi_T,f,A)$ satisfies assumption \textbf{(H)}, the same terminology is used with $\xi_T$ as the terminal condition. More precisely, this means in particular that
\begin{description}
\item[(H1)$_T$] A terminal condition $\xi_T$ corresponds to
\begin{itemize}
	\item[(i)] $\xi_T$ is an $\mathcal{F}_T$-measurable random variable;
	\item[(ii)] $\mathbb{E}\left[|\mathcal{E}(\beta A)_T \xi_T|^2\right]<+\infty$.
\end{itemize}
\end{description}

\begin{remark}\label{rmq1}
	Let us first recall that, for any RCLL semimartingale $X$ and any right-continuous process $V$ of finite variation, the quadratic covariation between $X$ and $V$ is giving by $d[X,V]_s=\Delta_- V_s dX_s$ on $[0,T]$; see, for instance, Proposition 4.49 in \cite[p.~52]{JacodShiryaev2013}. 
	
	Moreover, if $Y$ is RCLL and adapted, then its left-limit process $Y_{-}$ is predictable. Since, by assumption \textbf{(H2)}-(i), the random field
	$$
	(\omega,t,y)\longmapsto f(\omega,t,y)
	$$
	is $\mathcal{P}\otimes\mathcal{B}(\mathbb{R})/\mathcal{B}(\mathbb{R})$-measurable, it follows that the process
	$$
	(\omega,t)\longmapsto f(\omega,t,Y_{t-}(\omega))
	$$
	is predictable. Indeed, define
	$$
	\Phi:\Omega\times[0,T]\longrightarrow \Omega\times[0,T]\times\mathbb{R}
	$$
	by
	$$
	\Phi(\omega,t)=\bigl(\omega,t,Y_{t-}(\omega)\bigr).
	$$
	For every $A\in\mathcal{P}$ and every $B\in\mathcal{B}(\mathbb{R})$, we have
	$$
	\Phi^{-1}(A\times B)
	=
	A\cap\{(\omega,t):Y_{t-}(\omega)\in B\}.
	$$
	Since $Y_{-}$ is predictable, the set
	$$
	\{(\omega,t):Y_{t-}(\omega)\in B\}
	$$
	belongs to $\mathcal{P}$. Hence
	$$
	\Phi^{-1}(A\times B)\in\mathcal{P}.
	$$
	Therefore, $\Phi$ is $\mathcal{P}/\bigl(\mathcal{P}\otimes\mathcal{B}(\mathbb{R})\bigr)$-measurable. Consequently,
	$$
	(\omega,t)\longmapsto f\bigl(\omega,t,Y_{t-}(\omega)\bigr)
	=
	f\circ\Phi(\omega,t)
	$$
	is $\mathcal{P}$-measurable. This proves the predictability of the process
	$f(\cdot,Y_{-})$.
\end{remark}

For the remainder of the paper, we introduce the following functional spaces. Let $\beta>0$.
\begin{itemize}
	\item $\mathcal{S}^{2}_{\beta}$: The space of one-dimensional $\mathbb{F}$-optional processes $(Y_t)_{t \leq T}$ with regulated trajectories such that
	$$
	\left\| Y\right\|^2_{\mathcal{S}^{2}_{\beta}}:=
	\mathbb{E}\left[ \esssup_{\eta \in \mathcal{T}_{[0,T]}} \mathcal{E}(\beta A)_\eta \left| Y_\eta \right|^2\right]  <+\infty,
	$$
	with the convention $\mathcal{S}^2:=\mathcal{S}^{2}_{0}$.

	\item Let $\mu_\beta$ be the measure defined on the predictable sigma-field $\mathcal{P}$ by
	$$
	\mu_\beta(B)
	:=
	\mathbb{E}\left[
	\int_{0}^T
	\mathcal{E}(\beta A)_s(\omega)\mathds{1}_{B}(\omega,s)\,dA_s(\omega)
	\right],
	\quad B\in\mathcal{P}.
	$$
	We then define
	$$
	\mathcal{S}^{2,A}_{\beta}
	:=
	\mathbb{L}^2\left(\Omega\times(0,T],\mathcal{P},\mu_\beta\right).
	$$
	Equivalently, $\mathcal{S}^{2,A}_{\beta}$ is the space of one-dimensional predictable processes $U=(U_t)_{0<t\leq T}$ such that
	$$
	\left\| U\right\|^2_{\mathcal{S}^{2,A}_{\beta}}
	:=
	\mathbb{E}\left[
	\int_{0}^T
	\mathcal{E}(\beta A)_s |U_s|^2\,dA_s
	\right]
	<+\infty.
	$$
	As usual in $\mathbb{L}^2$, two predictable processes $U$ and $\widetilde U$ are identified whenever
	$$
	\mathbb{E}\left[
	\int_{0}^T
	\mathcal{E}(\beta A)_s |U_s-\widetilde U_s|^2\,dA_s
	\right]
	=0.
	$$

	\item $\mathcal{M}^2_{\beta}$: the space of one-dimensional $\mathbb{F}$-martingales $(M_t)_{t \leq T}$ such that $M_0=0$ a.s. and
	$$
	\left\| M\right\|^2_{\mathcal{M}^2_{\beta}}:=
	\mathbb{E}\left[ \int_{0}^{T}\mathcal{E}(\beta A)_s d\left[M\right]_s \right] <+\infty,
	$$
	with the convention $\mathcal{M}^2_{}:=\mathcal{M}^2_{0}$.
	
	\item $\mathcal{K}^2$: The space of one-dimensional $\mathbb{F}$-optional non-decreasing processes $(K_t)_{t \leq T}$ with regulated trajectories such that
	$$
	\left\| K\right\|^2_{\mathcal{K}^2}:=\mathbb{E}\left| K_T \right|^2  <+\infty.
	$$
\end{itemize}

\begin{lemma}\label{ouma}
	Let $(Y,M)\in \mathcal{S}^2_\beta \times \mathcal{M}^2_\beta$. Then the stochastic integral
	$$
	\int_{0}^{\cdot}\mathcal{E}(\beta A)_sY_{s-}\,dM_s
	$$
	is a uniformly integrable martingale. Moreover, for every $\varepsilon>0$, there exists a universal constant $c>0$ such that
	\begin{equation}\label{BDG-res}
	\mathbb{E}\left[\sup_{0\leq t\leq T}\left|\int_{t}^{T}\mathcal{E}(\beta A)_sY_{s-}\,dM_s\right|\right]
	\leq
	\frac{\varepsilon}{2}\mathbb{E}\left[\sup_{0\leq t\leq T}\mathcal{E}(\beta A)_t|Y_t|^2\right]
	+
	\frac{c^2}{2\varepsilon}\mathbb{E}\left[\int_{0}^{T}\mathcal{E}(\beta A)_s\,d[M]_s\right].
   \end{equation}
\end{lemma}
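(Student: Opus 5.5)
The plan is to combine the Burkholder--Davis--Gundy inequality (for $p=1$) with a Cauchy--Schwarz/Young splitting. Put
\[
L_t:=\int_0^t \mathcal{E}(\beta A)_sY_{s-}\,dM_s .
\]
The integrand $H_s:=\mathcal{E}(\beta A)_sY_{s-}$ is predictable, because both $\mathcal{E}(\beta A)$ and $Y_{-}$ are predictable (Remark \ref{rmq4}(i) and Remark \ref{rmq1}). So $L$ is well defined as a local martingale once we know that $\int_0^T H_s^2\,d[M]_s<\infty$ a.s. The main work is to show that $\mathbb{E}\big[[L]_T^{1/2}\big]<\infty$. Davis' inequality then shows that $\sup_t|L_t|$ is integrable, which makes $L$ a uniformly integrable martingale.

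\emph{Step 1 (pathwise bound on the bracket).} We have
\[
[L]_T=\int_0^T \mathcal{E}(\beta A)_s^2|Y_{s-}|^2\,d[M]_s\le \Big(\sup_{s\le T}\mathcal{E}(\beta A)_s|Y_{s-}|^2\Big)\int_0^T\mathcal{E}(\beta A)_s\,d[M]_s .
\]
Remark \ref{rmq2}(1), which uses \textbf{(H3)} and \eqref{matla}, gives
\[
\sup_s\mathcal{E}(\beta A)_s|Y_{s-}|^2\le (1+\beta\mathfrak{C}_A)\esssup_{\tau}\mathcal{E}(\beta A)_\tau|Y_\tau|^2 ,
\]
and the right-hand side is integrable because $Y\in\mathcal{S}^2_\beta$. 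Hence
\[
[L]_T^{1/2}\le \Big((1+\beta\mathfrak{C}_A)\esssup_\tau\mathcal{E}(\beta A)_\tau|Y_\tau|^2\Big)^{1/2}\Big(\int_0^T\mathcal{E}(\beta A)_s\,d[M]_s\Big)^{1/2}.
\]
By Cauchy--Schwarz, this is integrable since $M\in\mathcal{M}^2_\beta$. In particular $[L]_T<\infty$ a.s., which also justifies the existence of the integral.

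\emph{Step 2 (BDG and the estimate).} For every $t$,
\[
\int_t^T H\,dM=L_T-L_t ,
\]
so $\sup_t|\int_t^T H\,dM|\le 2\sup_t|L_t|$. Davis' inequality gives $\mathbb{E}[\sup_t|L_t|]\le C_1\mathbb{E}[[L]_T^{1/2}]$. I then apply Young's inequality $ab\le \frac{\varepsilon}{2}a^2+\frac{1}{2\varepsilon}b^2$ to the product in Step 1, with $a$ the first factor and $b$ the $[M]$-factor multiplied by $2C_1$. This produces the right-hand side of \eqref{BDG-res}, with $c=2C_1$ absorbed appropriately.

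\emph{Main obstacle.} The delicate point is the form of the $Y$-term. The statement writes $\mathbb{E}[\sup_t\mathcal{E}(\beta A)_t|Y_t|^2]$, whereas the bound naturally produces $\sup_s\mathcal{E}(\beta A)_s|Y_{s-}|^2$, which carries the weight at $s$ and not at $s-$. To pass between the two I would proceed as follows.
\begin{itemize}
\item Observe that $\mathcal{E}(\beta A)_s|Y_{s-}|^2$ is a left limit of $\mathcal{E}(\beta A)_{u}|Y_u|^2$ up to the factor $\mathcal{E}(\beta A)_s/\mathcal{E}(\beta A)_{s-}=1+\beta\Delta_-A_s\le 1+\beta\mathfrak{C}_A$, as in Remark \ref{rmq2}.
\item Absorb this factor, together with the BDG constant, into the scaling of $\varepsilon$ and $c$. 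Since $\varepsilon$ is arbitrary, replacing it by $\varepsilon/(1+\beta\mathfrak{C}_A)$ and enlarging $c$ accordingly yields exactly the stated form.
\end{itemize}
The resulting constant then depends on $\beta\mathfrak{C}_A$. So "universal" should be read as independent of $Y$, $M$ and $\varepsilon$. Alternatively, one may interpret the supremum of $\mathcal{E}(\beta A)_t|Y_t|^2$ in the sense of the essential supremum over stopping times, which dominates the pathwise supremum of left limits for regulated optional $Y$ by the rational-times argument of Remark \ref{rmq2}(1).
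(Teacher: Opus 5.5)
Your proposal follows the paper's route: Davis' (BDG) inequality for $\int_0^\cdot\mathcal{E}(\beta A)_sY_{s-}\,dM_s$, then a Cauchy--Schwarz/Young splitting of $\bigl(\int_0^T\mathcal{E}(\beta A)_s^2|Y_{s-}|^2\,d[M]_s\bigr)^{1/2}$, then integrability of the supremum to get a uniformly integrable martingale. Your handling of the weight is more accurate than the paper's: the paper bounds $\mathcal{E}(\beta A)_s|Y_{s-}|^2$ directly by $\sup_t\mathcal{E}(\beta A)_t|Y_t|^2$ and so loses the factor $1+\beta\Delta_-A_s$; a single deterministic jump of $A$ at which $M$ also jumps shows that $c$ must depend on $\beta\mathfrak{C}_A$, so your first bullet (via Remark~\ref{rmq2}) is the right fix, whereas your closing esssup alternative does not remove that factor, because the essential supremum only dominates $\mathcal{E}(\beta A)_{s-}|Y_{s-}|^2$.
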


\begin{proof}
	By Remark \ref{rmq4}-(i), the process $\mathcal{E}(\beta A)$ is predictable and locally bounded. Since $Y$ is RCLL and adapted, the left-limit process $Y_-$ is predictable. Therefore,
	$$
	\int_{0}^{\cdot}\mathcal{E}(\beta A)_sY_{s-}\,dM_s
	$$
	is a well-defined local martingale. Applying the Burkholder--Davis--Gundy (B-D-G) inequality, see, for instance, Theorem 48 in \cite[p.~193]{Protter2004}, and using the inequality
	$$
	ab\leq \frac{\varepsilon}{2}a^2+\frac{1}{2\varepsilon}b^2,
	\quad \varepsilon>0,
	$$
	we obtain
	\begin{equation*}
	\begin{split}
		\mathbb{E}\left[\sup_{0\leq t\leq T}\left|\int_{t}^{T}\mathcal{E}(\beta A)_sY_{s-}\,dM_s\right|\right]
		&\leq
		c\,\mathbb{E}\left[
		\left(\int_{0}^{T}\mathcal{E}(\beta A)_s^2 |Y_{s-}|^2\,d[M]_s\right)^{\frac12}
		\right]  \\
		&\leq
		c\,\mathbb{E}\left[
		\sup_{0\leq t\leq T}\sqrt{\mathcal{E}(\beta A)_t}|Y_t|
		\left(\int_{0}^{T}\mathcal{E}(\beta A)_s\,d[M]_s\right)^{\frac12}
		\right]  \\
		&\leq
		\frac{\varepsilon}{2}\mathbb{E}\left[\sup_{0\leq t\leq T}\mathcal{E}(\beta A)_t|Y_t|^2\right]
		+
		\frac{c^2}{2\varepsilon}\mathbb{E}\left[\int_{0}^{T}\mathcal{E}(\beta A)_s\,d[M]_s\right]<+\infty.
	\end{split}
	\end{equation*}
	The right-hand side is finite because $(Y,M)\in \mathcal{S}^2_\beta \times \mathcal{M}^2_\beta$. Hence, by Theorem 51 in \cite[p.~38]{Protter2004}, the local martingale
	$
	\int_{0}^{\cdot}\mathcal{E}(\beta A)_sY_{s-}\,dM_s
	$
	 is therefore a uniformly integrable martingale. The announced estimate follows from the preceding inequalities.
\end{proof}

\begin{lemma}\label{ouma1}
	Let $(Y,M)\in \mathcal{S}^2_\beta \times \mathcal{M}^2_\beta$, and let $f$ be a generator satisfying condition \textbf{(H2)}. Then, for every $t\in[0,T]$, we have
	$$
	\mathbb{E}\left[
	\sum_{t<s\leq T}
	\mathcal{E}(\beta A)_s f(s,Y_{s-}) \Delta_- M_s \Delta_- A_s
	\right]
	=0.
	$$
\end{lemma}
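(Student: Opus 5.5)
The plan is to recast the jump sum as the terminal value of a stochastic integral against $M$ with a predictable, bounded-enough integrand, and then use that such integrals are uniformly integrable martingales. Set
\[
H_s:=\mathcal{E}(\beta A)_s\, f(s,Y_{s-})\,\Delta_-A_s .
\]
$H$ is predictable. Indeed, $\mathcal{E}(\beta A)$ is predictable by Remark \ref{rmq4}-(i), $f(\cdot,Y_-)$ is predictable by Remark \ref{rmq1}, and $\Delta_-A=A-A_-$ is predictable because $A$ is predictable and $A_-$ is left-continuous adapted. By Remark \ref{rmq1}, $d[M,A]_s=\Delta_-A_s\,dM_s$, so
\[
\sum_{t<s\leq T}\mathcal{E}(\beta A)_s f(s,Y_{s-})\Delta_-M_s\Delta_-A_s=\int_t^T \mathcal{E}(\beta A)_s f(s,Y_{s-})\,d[M,A]_s=\int_t^T H_s\,dM_s .
\]
(Equivalently, $[M,A]=\sum \Delta_-M\Delta_-A$ since $A$ has finite variation.) The claim then reduces to showing that $\int_0^\cdot H\,dM$ is a uniformly integrable martingale. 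That gives $\mathbb{E}[\int_t^T H\,dM]=0$ for every deterministic $t$. The alternative route through Remark \ref{rmq4}-(ii), summing over predictable jump times of $A$, gives the same conclusion, but the integral formulation handles the summation more cleanly.

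For uniform integrability I would imitate Lemma \ref{ouma}: use the B-D-G inequality and bound
\[
\mathbb{E}\Big[\Big(\int_0^T H_s^2\,d[M]_s\Big)^{1/2}\Big].
\]
By \textbf{(H2)}-(ii), $|f(s,Y_{s-})|\le |f(s,0)|+K|Y_{s-}|$. By \textbf{(H3)}, $\Delta_-A_s\le \mathfrak{C}_A$, and by \eqref{bounded_exp_A}, $\mathcal{E}(\beta A)\le e^{\beta\mathfrak{C}_A}$. The $Y$-part is then controlled exactly as in Lemma \ref{ouma}, using Remark \ref{rmq2}-1 together with $\sup_s\mathcal{E}(\beta A)_s|Y_{s-}|^2\in L^1$ and $M\in\mathcal{M}^2_\beta$.

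The main obstacle is the $f(s,0)$ part. Hypothesis \textbf{(H2)}-(iii) only controls $f(\cdot,0)$ in $L^2(dA)$, not pointwise, and $d[M]$ need not be related to $dA$. The key observation is that the integrand carries the factor $\Delta_-A_s$, so only the atoms of $dA$ matter. At a jump time $s$,
\[
|f(s,0)|^2(\Delta_-A_s)^2\le \mathfrak{C}_A\,|f(s,0)|^2\Delta_-A_s\le \mathfrak{C}_A\int_0^T|f(u,0)|^2\,dA_u .
\]
Hence $\sup_s |f(s,0)|\Delta_-A_s\le \big(\mathfrak{C}_A\int_0^T|f(u,0)|^2dA_u\big)^{1/2}=:\Theta$, and $\Theta\in L^2$ by \textbf{(H2)}-(iii) together with $\mathcal{E}(\beta A)\ge1$. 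Then
\[
\Big(\int_0^T H_s^2\,d[M]_s\Big)^{1/2}\le e^{\beta\mathfrak{C}_A}\Big(\Theta+K\mathfrak{C}_A\sup_s|Y_{s-}|\Big)[M]_T^{1/2},
\]
and Cauchy--Schwarz makes its expectation finite. Theorem 51 of \cite{Protter2004} then gives uniform integrability, and taking expectations finishes the proof.
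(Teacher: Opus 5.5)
Your proof is correct, but it takes a different route from the paper's. The paper works with the pathwise sum. It first shows $\mathbb{E}\bigl[\sum_{t<s\le T}|\mathcal{E}(\beta A)_s f(s,Y_{s-})\Delta_-M_s\Delta_-A_s|\bigr]<+\infty$ by the Kunita--Watanabe inequality. It then exhausts the jumps of $A$ by predictable stopping times $\tau_n$ and kills each term separately: $\mathcal{E}(\beta A)_{\tau_n}f(\tau_n,Y_{\tau_n-})\Delta_-A_{\tau_n}$ is $\mathcal{F}_{\tau_n-}$-measurable, while $\mathbb{E}[\Delta_-M_{\tau_n}\mid\mathcal{F}_{\tau_n-}]=0$. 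The absolute integrability is what allows exchanging $\mathbb{E}$ and $\sum_n$.

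You instead exploit the predictability of $A$ through $[M,A]=\Delta_-A\cdot M$ to rewrite the sum as $\int_t^T H_s\,dM_s$. You then replace the term-by-term conditioning by a single $\mathcal{H}^1$ bound via Davis/BDG. The key device is that only the atoms of $dA$ enter, so $\sup_s|f(s,0)|\Delta_-A_s\le\Theta\in\mathbb{L}^2$. This is the same inequality $(\Delta_-A_s)^2\le\mathfrak{C}_A\Delta_-A_s$ that the paper's Kunita--Watanabe step implicitly needs, since the integrator produced there is really $d[A]$ rather than $dA$.

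Your version yields more. It shows that $t\mapsto\sum_{s\le t}(\cdots)$ is a uniformly integrable martingale with a maximal estimate. That is exactly the form $-2\int\mathcal{E}(\beta A)_s\mathfrak{f}(s)\Delta_-A_s\,dM_s$ in which this term appears in the paper's It\^o formula for the reflected equations.

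The price is reliance on associativity of stochastic integrals, and on the agreement of the stochastic and Stieltjes integrals against the finite-variation process $[M,A]$. For the latter you should state explicitly that the sum converges absolutely a.s., for example by Cauchy--Schwarz:
\[
\sum_{s}|H_s\,\Delta_-M_s|\le\Bigl(\int_0^T\mathcal{E}(\beta A)_s^2\,|f(s,Y_{s-})|^2\,dA_s\Bigr)^{1/2}\bigl(\mathfrak{C}_A[M]_T\bigr)^{1/2}<+\infty\quad\text{a.s.}
\]
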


\begin{proof}
	We adopt the notation $d\|\alpha\|$ to denotes the total variation measure corresponding to the measure $d\alpha$.
	
	First, we prove that the jump sum is absolutely integrable. By Remark \ref{rmq1}-(ii), the Kunita--Watanabe inequality, see, e.g., Theorem 25 in \cite[p.~69]{Protter2004}, the fact that $(Y,M)\in \mathcal{S}^2_\beta \times \mathcal{M}^2_\beta$, and assumption \textbf{(H2)}-(iii), we obtain
	\begin{equation}\label{driver_part_Tonelli}
		\begin{split}
		&\mathbb{E}\left[
		\sum_{t<s\leq T}
		\left|
		\mathcal{E}(\beta A)_s f(s,Y_{s-}) \Delta_- M_s \Delta_- A_s
		\right|
		\right] \\
		&\leq
		\mathbb{E}\left[
		\int_0^T
		\mathcal{E}(\beta A)_s
		\left|f(s,Y_{s-})\right|
		\,d\|[M,A]\|_s
		\right] \\
		&\leq
		\mathbb{E}\left[
		\left(
		\int_0^T
		\mathcal{E}(\beta A)_s
		\left|f(s,Y_{s-})\right|^2\,dA_s
		\right)^{\frac12}
		\left(
		\int_0^T
		\mathcal{E}(\beta A)_s\,d[M]_s
		\right)^{\frac12}
		\right] \\
		&\leq
		\left(
		\mathbb{E}\left[
		\int_0^T
		\mathcal{E}(\beta A)_s
		\left|f(s,Y_{s-})\right|^2\,dA_s
		\right]
		\right)^{\frac12}
		\left(
		\mathbb{E}\left[
		\int_0^T
		\mathcal{E}(\beta A)_s\,d[M]_s
		\right]
		\right)^{\frac12} \\
		&\leq
		\frac12
		\mathbb{E}\left[
		\int_0^T
		\mathcal{E}(\beta A)_s
		\left|f(s,Y_{s-})\right|^2\,dA_s
		\right]
		+
		\frac12
		\mathbb{E}\left[
		\int_0^T
		\mathcal{E}(\beta A)_s\,d[M]_s
		\right] \\
		&\leq
		K^2
		\mathbb{E}\left[
		\int_0^T
		\mathcal{E}(\beta A)_s
		|Y_{s-}|^2\,dA_s
		\right]
		+
		\mathbb{E}\left[
		\int_0^T
		\mathcal{E}(\beta A)_s
		|f(s,0)|^2\,dA_s
		\right]
		+
		\frac12
		\mathbb{E}\left[
		\int_0^T
		\mathcal{E}(\beta A)_s\,d[M]_s
		\right]
		<+\infty.
		\end{split}
	\end{equation}
%
%
	Thus, the jump sum is integrable, and we may interchange the expectation and the summation.
	
	Next, we may choose a sequence of predictable stopping times $\{\tau_n\}_{n\geq 1}\subset \mathcal{T}^p_{0,T}$ whose graphs exhaust the left jumps of the process $A$. Therefore,
	$$
	\begin{aligned}
		&\mathbb{E}\left[
		\sum_{t<s\leq T}
		\mathcal{E}(\beta A)_s f(s,Y_{s-}) \Delta_- M_s \Delta_- A_s
		\right] \\
		&=
		\sum_{n\geq 1}
		\mathbb{E}\left[
		\mathds{1}_{\{t<\tau_n\leq T\}}
		\mathcal{E}(\beta A)_{\tau_n}
		f(\tau_n,Y_{\tau_n-})
		\Delta_- M_{\tau_n} \Delta_- A_{\tau_n}
		\right].
	\end{aligned}
	$$
	For each $n\geq 1$, the random variable
	$$
	\mathds{1}_{\{t<\tau_n\leq T\}}
	\mathcal{E}(\beta A)_{\tau_n}
	f(\tau_n,Y_{\tau_n-})
	\Delta_- A_{\tau_n}
	$$
	is $\mathcal{F}_{\tau_n-}$-measurable. Since $\tau_n$ is predictable and $M$ is a martingale\footnote{This points are justified in Remark \ref{rmq4}-(ii).}, we have
	$$
	\mathbb{E}\left[\Delta_- M_{\tau_n}\mid \mathcal{F}_{\tau_n-}\right]=0.
	$$
	Consequently,
	$$
	\mathbb{E}\left[
	\mathds{1}_{\{t<\tau_n\leq T\}}
	\mathcal{E}(\beta A)_{\tau_n}
	f(\tau_n,Y_{\tau_n-})
	\Delta_- M_{\tau_n} \Delta_- A_{\tau_n}
	\right]
	=0.
	$$
	Summing over $n\geq 1$ gives
	$$
	\mathbb{E}\left[
	\sum_{t<s\leq T}
	\mathcal{E}(\beta A)_s f(s,Y_{s-}) \Delta_- M_s \Delta_- A_s
	\right]
	=0,
	\quad \forall t\in[0,T].
	$$
	This completes the proof.
\end{proof}

\paragraph*{Notation.}
Throughout this paper, $\mathfrak{C}$ denotes a positive constant whose value may change from one line to another. When needed, we write $\mathfrak{C}_\gamma$ to indicate that the constant depends on the parameter or collection of parameters $\gamma$.

	\section{Generalized BSDE}\label{sec3}
	Let $\xi_T$ be a terminal value of the lower obstacle $\xi$. 
	In this first part, our aim is to study the solvability of the following GBSDE associated with data $(\xi_T,f,A)$:
	\begin{equation}\label{GBSDE}
		Y_t=\xi_T+\int_{t}^{T}f(s,Y_{s-})dA_s-\left(M_T-M_t\right),\quad t \in [0,T].
	\end{equation}

\begin{remark}\label{Otage}
	By taking the conditional expectation with respect to $\mathcal F_t$ on both sides of \eqref{GBSDE}, and using the martingale property of $M$, we obtain
	\begin{equation}\label{GBSDE.write.Y}
		Y_t=
		\mathbb{E}\left[
		\xi_T+\int_{t}^Tf(s,Y_{s-})\,dA_s
		\mid \mathcal{F}_t
		\right],
		\quad t \in [0,T].
	\end{equation}
	Conversely, if a process $Y$ satisfies \eqref{GBSDE.write.Y}, then we may define
	\begin{equation*}
		M_t:=
		\mathbb{E}\left[
		\xi_T+\int_{0}^T f(s,Y_{s-})\,dA_s
		\mid \mathcal F_t
		\right],
		\quad t\in[0,T].
	\end{equation*}
	Then $M$ is a martingale, $Y_T=\xi_T$ and
	\begin{equation*}
		Y_t=-\int_{0}^T f(s,Y_{s-})\,dA_s+M_t.
	\end{equation*}
	Consequently,
	\begin{equation*}
		Y_t
		=
		\xi_T+\int_{t}^Tf(s,Y_{s-})\,dA_s
		-\left(M_T-M_t\right),
	\end{equation*}
	which shows that the pair $(Y,M)$ satisfies the GBSDE \eqref{GBSDE}.
	Thus, solving \eqref{GBSDE} is equivalent to finding a process $Y$ satisfying \eqref{GBSDE.write.Y}, with the martingale $M$ defined as above.
\end{remark}

	We now introduce the notion of a solution to the GBSDE
	\eqref{GBSDE} associated with the data \((\xi_T,f,A)\).
	\begin{definition}\label{def-GBSDE}
		A pair of processes \((Y,M)\) is said to be a solution to the
		GBSDE \eqref{GBSDE} associated with the data
		\((\xi_T,f,A)\) if
		\begin{itemize}
			\item $
			(Y,M)\in
			\mathcal{S}_{\beta}^{2}\times\mathcal{M}_{\beta}^{2}
			$ for some $\beta>0$;
			
			\item $\mathbb{P}\left(
			Y_t
			=
			\xi_T
			+\int_t^T f(s,Y_{s-})\,dA_s
			-\left(M_T-M_t\right),
			~ \textnormal{ for all } t\in[0,T]
			\right)=1.$
		\end{itemize}
		
	\end{definition}

\subsection{Special case: a generator independent of the \(y\)-variable}
We first consider the particular case in which the generator \(f\) does not
depend on the \(y\)-variable. More precisely, we assume that there exists a
process
\[
\mathfrak{f}:\Omega\times[0,T]\longrightarrow\mathbb{R}
\]
such that
\[
f(\omega,t,y)=\mathfrak{f}(\omega,t),
\qquad
(\omega,t,y)\in\Omega\times[0,T]\times\mathbb{R}.
\]
Equivalently, one may set
\[
\mathfrak{f}(\omega,t):=f(\omega,t,0),
\qquad
(\omega,t)\in\Omega\times[0,T].
\]

We further assume that
\[
\mathbb{E}\left[
\int_0^T
\mathcal{E}(\beta A)_s
|\mathfrak{f}(s)|^2\,dA_s
\right]
<+\infty.
\]

In this setting, we consider the following GBSDE:
\begin{equation}\label{GBSDE.Indp.y}
	Y_t
	=
	\xi_T
	+\int_t^T\mathfrak{f}(s)\,dA_s
	-\left(M_T-M_t\right),
	\qquad t\in[0,T].
\end{equation}

\subsubsection{A priori estimates and uniqueness}
%

\begin{proposition} \label{GBSDE-propo1}
	Let $(\xi_T,f,A)$ be a set of data satisfying assumption \textbf{(H)}. Let $(Y,M)$ be a solution of the GBSDE \eqref{GBSDE.Indp.y} associated with $(\xi_T,f,A)$. Then there exists a constant $\mathfrak{C}_{\beta}>0$ such that
	\begin{equation*}
		\begin{split}
			&\mathbb{E}\left[\sup_{0\leq t\leq T}\mathcal{E}(\beta A)_t |Y_t|^2\right]+\mathbb{E}\left[ \int_{0}^{T}\mathcal{E}(\beta A)_s |Y_{s-}|^2 dA_s\right] +\mathbb{E}\left[\int_{0}^{T}{\mathcal{E}(\beta A)_s}d[M]_s\right] \\ 
			& \leq \mathfrak{C}_{\beta} \left(  \mathbb{E}\left[\mathcal{E}(\beta A)_T |\xi_T|^2\right] +\mathbb{E}\left[\int_{0}^{T}\mathcal{E}(\beta A)_{s} |\mathfrak{f}(s)|^2 dA_s\right]\right).
		\end{split}
	\end{equation*}
\end{proposition}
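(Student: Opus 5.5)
We apply the integration-by-parts (change-of-variables) formula to the product $\mathcal{E}(\beta A)_t|Y_t|^2$ between $t$ and $T$. Since $\mathcal{E}(\beta A)$ is a predictable RCLL finite-variation process, $d\bigl(\mathcal{E}(\beta A)|Y|^2\bigr)_s=\mathcal{E}(\beta A)_{s-}\,d|Y|^2_s+|Y_{s-}|^2\,d\mathcal{E}(\beta A)_s+\Delta\mathcal{E}(\beta A)_s\,\Delta(|Y|^2)_s$. Regrouping the jump term with the first term turns $\mathcal{E}(\beta A)_{s-}$ into $\mathcal{E}(\beta A)_s$, so that
\[
\mathcal{E}(\beta A)_t|Y_t|^2+\beta\int_t^T\mathcal{E}(\beta A)_{s-}|Y_{s-}|^2dA_s+\int_t^T\mathcal{E}(\beta A)_s\,d[Y]_s
=\mathcal{E}(\beta A)_T|\xi_T|^2+2\int_t^T\mathcal{E}(\beta A)_sY_{s-}\mathfrak{f}(s)\,dA_s-2\int_t^T\mathcal{E}(\beta A)_sY_{s-}\,dM_s .
\]
Here $dY_s=-\mathfrak{f}(s)dA_s+dM_s$ and $[Y]=[M]-2[M,\textstyle\int\mathfrak{f}dA]+[\textstyle\int\mathfrak{f}dA]$. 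By Remark \ref{rmq1}, $d[M,\int\mathfrak f dA]_s=\mathfrak f(s)\Delta_-A_s\,dM_s$, and the term $\sum\mathcal{E}(\beta A)_s\mathfrak f(s)\Delta_-M_s\Delta_-A_s$ has zero expectation by Lemma \ref{ouma1} applied with the $y$-independent generator. The term $\sum\mathcal{E}(\beta A)_s|\mathfrak f(s)\Delta_-A_s|^2$ is nonnegative and can be dropped from the left. Using \eqref{matla} together with \eqref{essential condition-r1}, we get $\mathcal{E}(\beta A)_{s-}\geq (1+\beta\mathfrak{C}_A)^{-1}\mathcal{E}(\beta A)_s$. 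This converts the $dA$ term on the left into $\frac{\beta}{1+\beta\mathfrak C_A}\int\mathcal{E}(\beta A)_s|Y_{s-}|^2dA_s$.

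The cross term is handled by Young's inequality: $2|Y_{s-}\mathfrak f(s)|\le\delta|Y_{s-}|^2+\delta^{-1}|\mathfrak f(s)|^2$. We choose $\delta=\frac{\beta}{2(1+\beta\mathfrak C_A)}$, so that half the $dA$ term is absorbed. Taking expectations at $t=0$, the stochastic integral vanishes since it is a uniformly integrable martingale by Lemma \ref{ouma}. This bounds $\mathbb E\int\mathcal{E}(\beta A)_s|Y_{s-}|^2dA_s$ and $\mathbb E\int\mathcal{E}(\beta A)_s d[M]_s$ by $\mathfrak C_\beta$ times the data norms. One technical point needs care: to handle the term $\mathbb E\int\mathcal E d[M]$, I would keep $[Y]$ on the left and then bound $[M]\le 2[Y]+2[\int\mathfrak f dA]$. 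The last bracket equals $\sum|\mathfrak f\Delta_-A|^2\le\mathfrak C_A\int|\mathfrak f|^2dA$ by (H3), which is controlled by the data. This avoids needing Lemma \ref{ouma1} beyond its stated form.

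For the supremum, we return to the identity, take $\sup_t$, and then expectations. The $dA$ cross term is bounded by the already controlled quantities. The martingale term is handled by \eqref{BDG-res} with $\varepsilon$ small, say $\varepsilon=1$ after doubling, so that the term $\frac{\varepsilon}{2}\mathbb E\sup\mathcal E|Y|^2$ is absorbed into the left side. This absorption requires a priori finiteness, which holds since $Y\in\mathcal S^2_\beta$. The remaining $\mathbb E\int\mathcal E d[M]$ is already bounded. Combining these gives the estimate.

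The main obstacle is the bookkeeping of the jump terms: distinguishing $\mathcal{E}(\beta A)_{s-}$ from $\mathcal{E}(\beta A)_s$ and treating the common jumps of $M$ and $A$ at predictable times. I expect this to be resolved by (H3) through \eqref{essential condition-r1} and by Lemma \ref{ouma1}/Remark \ref{rmq4}-(ii).
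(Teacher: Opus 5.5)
Your proposal is correct and follows the paper's argument: the product formula for $\mathcal{E}(\beta A)|Y|^2$ with $\mathcal{E}(\beta A)_{s-}\geq(1+\beta\mathfrak{C}_A)^{-1}\mathcal{E}(\beta A)_s$ from \eqref{matla}, Young's inequality with $\delta=\beta/(2(1+\beta\mathfrak{C}_A))$, Lemma \ref{ouma} for the expectation step, and \eqref{BDG-res} with absorption for the supremum. In the supremum step, the factor $2$ in front of the martingale term makes the coefficient to be absorbed equal to $\varepsilon$ itself, so you need $\varepsilon<1$ (e.g.\ $\varepsilon=1/2$, as in the paper), not $\varepsilon=1$.

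The one point where you depart from the paper is worth keeping. You keep $\int_t^T\mathcal{E}(\beta A)_s\,d[Y]_s\geq 0$ on the left, then recover the martingale term from $[M]\leq 2[Y]+2[\int_0^\cdot\mathfrak{f}(s)\,dA_s]$ together with $\sum_{s}\mathcal{E}(\beta A)_s|\mathfrak{f}(s)|^2|\Delta_-A_s|^2\leq\mathfrak{C}_A\int_0^T\mathcal{E}(\beta A)_s|\mathfrak{f}(s)|^2\,dA_s$. This is a small but genuine simplification: the cross-jump sum $\sum\mathcal{E}(\beta A)_s\mathfrak{f}(s)\Delta_-A_s\Delta_-M_s$ never appears. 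The paper, by contrast, must kill that sum in expectation with Lemma \ref{ouma1} and then bound it again via Kunita--Watanabe in the supremum step.
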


	\begin{proof}
		 From It\^o's formula applied to the process $\mathcal{E}(\beta A)_t |Y_t|^2$ along with the use of claim from Remark \ref{rmq1} and \eqref{matla} it follows that
		 \begin{equation}\label{Ito1}
		 	\begin{split}
		 		d\left(\mathcal{E}(\beta A)_s |Y_s|^2\right)&=\mathcal{E}(\beta A)_{s-}d|Y_{s}|^2+|Y_{s-}|^2 d\mathcal{E}(\beta A)_s+\Delta \mathcal{E}(\beta A)_{s-} d|Y_{s}|^2\\
		 		&=\mathcal{E}(\beta A)_{s}d|Y_{s}|^2+|Y_{s-}|^2 d\mathcal{E}(\beta A)_s\\
		 		&=2\mathcal{E}(\beta A)_{s}Y_{s-}dY_s+\mathcal{E}(\beta A)_{s}d[Y]^c_s+\mathcal{E}(\beta A)_{s}\left(\Delta_- Y_s\right)^2+\beta \frac{\mathcal{E}(\beta A)_s}{1+\beta \Delta A_s} |Y_{s-}|^2 dA_s
		 	\end{split}
		 \end{equation}
		 From the GBSDE \eqref{GBSDE.Indp.y} and using Theorem 13 in \cite[p. 60]{Protter2004}, we have
		 $$
		 [Y]^c_s=[M]^c_s
		 $$
		 and
		 \begin{equation}\label{jump of Y}
		 	\Delta_- Y_s=-\mathfrak{f}(s)\Delta_- A_s+\Delta_- M_s
		 \end{equation}
		 Thus
		 \begin{equation}\label{jump of Y square}
		  \left(\Delta_- Y_s\right)^2=|\mathfrak{f}(s)|^2 |\Delta_- A_s|^2-2\mathfrak{f}(s)\Delta_- A_s \Delta_-M_s +|\Delta_- M_s|^2
		 \end{equation}
		 Therefore, we get
		 \begin{equation}\label{Plug1}
		 	\begin{split}
		 		\mathcal{E}(\beta A)_{s}d[Y]_s&=\mathcal{E}(\beta A)_{s}d[Y]^c_s+\mathcal{E}(\beta A)_{s}\left(\Delta_- Y_s\right)^2\\
		 		&=\mathcal{E}(\beta A)_{s}|\mathfrak{f}(s)|^2 |\Delta_- A_s|^2-2\mathcal{E}(\beta A)_{s}\mathfrak{f}(s)\Delta_- A_s \Delta_-M_s +\mathcal{E}(\beta A)_{s}d [M]_s
		 	\end{split}
		 \end{equation}
		 Plugging \eqref{Plug1} into \eqref{Ito1} and using \eqref{essential condition-r1}, we find
		 \begin{equation}\label{Ito-r1}
		 	\begin{split}
		 	&\mathcal{E}(\beta A)_t |Y_t|^2+\frac{\beta}{1+\beta \mathfrak{C}_A }\int_{t}^{T}\mathcal{E}(\beta A)_s |Y_{s-}|^2 dA_s+\int_{t}^{T}{\mathcal{E}(\beta A)_s}d[M]_s\\
		 	&\leq	\mathcal{E}(\beta A)_T |\xi_T|^2+2\int_{t}^{T}\mathcal{E}(\beta A)_{s}Y_{s-} \mathfrak{f}(s)dA_s-2\int_{t}^{T}\mathcal{E}(\beta A)_{s}Y_{s-}dM_s\\
		 	&\qquad-\sum_{t < s \leq T} \mathcal{E}(\beta A)_s|\mathfrak{f}(s)|^2 |\Delta_- A_s|^2+2\sum_{t < s \leq T} \mathcal{E}(\beta A)_s \mathfrak{f}(s)\Delta_- A_s \Delta_-M_s\\
		 	& \leq \mathcal{E}(\beta A)_T |\xi_T|^2+2\int_{t}^{T}\mathcal{E}(\beta A)_{s}Y_{s-} \mathfrak{f}(s)dA_s-2\int_{t}^{T}\mathcal{E}(\beta A)_{s}Y_{s-}dM_s
		 	+\sum_{t < s \leq T} \mathcal{E}(\beta A)_s \mathfrak{f}(s)\Delta_- A_s \Delta_-M_s.
		 	\end{split}
		 \end{equation}
		 From the basic inequality $2 ab \leq \epsilon a^2 +\frac{1}{\epsilon} b^2$, $\forall \epsilon >0$, we get the following inequality 
		 \begin{equation}\label{Ito-r2}
		 	2Y_{s-} \mathfrak{f}(s) \leq  \dfrac{\beta}{2(1+\beta \mathfrak{C}_A)}|Y_{s-}|^2+\dfrac{2(1+\beta \mathfrak{C}_A)}{\beta} |\mathfrak{f}(s)|^2 .
		 \end{equation}
	 Plugging \eqref{Ito-r2} into \eqref{Ito-r1}, we get
	 \begin{equation}\label{Ito-r3}
	 	\begin{split}
	 		&\mathcal{E}(\beta A)_t |Y_t|^2+\frac{\beta}{2 \left(1+\beta \mathfrak{C}_A\right) }\int_{t}^{T}\mathcal{E}(\beta A)_s |Y_{s-}|^2 dA_s+\int_{t}^{T}{\mathcal{E}(\beta A)_s}d[M]_s\\
	 		& \leq \mathcal{E}(\beta A)_T |\xi_T|^2+\int_{t}^{T}\mathcal{E}(\beta A)_{s} |\mathfrak{f}(s)|^2 dA_s-2\int_{t}^{T}\mathcal{E}(\beta A)_{s}Y_{s-}dM_s\\
	 		&\qquad+\sum_{t < s \leq T} \mathcal{E}(\beta A)_s \mathfrak{f}(s)\Delta_- A_s \Delta_-M_s.\\
	 	\end{split}
	 \end{equation}
	 Taking expectation on both sides of \eqref{Ito-r3} along with the application of Lemmas \ref{ouma} and \ref{ouma1}, we derive
	 	 \begin{equation}\label{Ito-r4}
	 	\begin{split}
	 		&\sup_{0\leq t\leq T} \mathbb{E}\left[  \mathcal{E}(\beta A)_t |Y_t|^2\right] +\frac{\beta}{2 \left(1+\beta \mathfrak{C}_A\right) }\mathbb{E}\left[ \int_{0}^{T}\mathcal{E}(\beta A)_s |Y_{s-}|^2 dA_s\right] +\mathbb{E}\left[\int_{0}^{T}{\mathcal{E}(\beta A)_s}d[M]_s\right] \\
	 		& \leq \mathbb{E}\left[\mathcal{E}(\beta A)_T |\xi_T|^2\right] +\mathbb{E}\left[\int_{0}^{T}\mathcal{E}(\beta A)_{s} |\mathfrak{f}(s)|^2 dA_s\right].
	 	\end{split}
	 \end{equation}
	
	Consequently, we derive the existence of a constant $\mathfrak{C}_{\beta}>0$ such that 
	 \begin{equation}\label{Ito-r5}
		\begin{split}
			&\sup_{0\leq t\leq T} \mathbb{E}\left[  \mathcal{E}(\beta A)_t |Y_t|^2\right] +\mathbb{E}\left[ \int_{0}^{T}\mathcal{E}(\beta A)_s |Y_{s-}|^2 dA_s\right] +\mathbb{E}\left[\int_{0}^{T}{\mathcal{E}(\beta A)_s}d[M]_s\right] \\
			& \leq \mathfrak{C}_{\beta,\mathfrak{C}_{A}} \left(  \mathbb{E}\left[\mathcal{E}(\beta A)_T |\xi_T|^2\right] +\mathbb{E}\left[\int_{0}^{T}\mathcal{E}(\beta A)_{s} |\mathfrak{f}(s)|^2 dA_s\right]\right).
		\end{split}
	\end{equation}
		 Returning now to \eqref{Ito-r3} and applying \eqref{BDG-res} with $\varepsilon=\frac{1}{2}$, see Lemma \ref{ouma}, we obtain
		  \begin{equation}\label{Ito-r6}
		 	\begin{split}
		 		\frac{1}{2} \mathbb{E}\left[\sup_{0\leq t\leq T}\mathcal{E}(\beta A)_t |Y_t|^2\right] 
		 		& \leq \mathbb{E}\left[\mathcal{E}(\beta A)_T |\xi_T|^2\right] +2\mathbb{E}\left[\int_{0}^{T}\mathcal{E}(\beta A)_{s} |\mathfrak{f}(s)|^2 dA_s\right] +2c^2\mathbb{E}\left[\int_{0}^{T}\mathcal{E}(\beta A)_s\,d[M]_s\right] \\
		 		&\qquad+\mathbb{E}\left[\sum_{t < s \leq T} \mathcal{E}(\beta A)_s |\mathfrak{f}(s)| \Delta_- A_s |\Delta_-M_s|\right].\\
		 	\end{split}
		 \end{equation}
		For the last term on the right-hand side of \eqref{Ito-r6}, we apply the same argument as in Lemma \ref{ouma1}, based on the Kunita--Watanabe inequality. Combining this with the estimate obtained in \eqref{Ito-r5}, we deduce that there exists a constant $\mathfrak{C}_{\beta}>0$ such that
		 \begin{equation*}
		 	\begin{split}
		 		\mathbb{E}\left[\sup_{0\leq t\leq T}\mathcal{E}(\beta A)_t |Y_t|^2\right] 
		 		& \leq \mathfrak{C}_{\beta,\mathfrak{C}_{A}} \left(  \mathbb{E}\left[\mathcal{E}(\beta A)_T |\xi_T|^2\right] +\mathbb{E}\left[\int_{0}^{T}\mathcal{E}(\beta A)_{s} |\mathfrak{f}(s)|^2 dA_s\right]\right).
		 	\end{split}
		 \end{equation*}
	 Completing the proof.
	 
	\end{proof}

Following the argument used in Proposition \ref{GBSDE-propo1}, we obtain the following more general a priori estimate.

\begin{proposition}\label{GBSDE-propo2}
	Let $(\xi^1_T,\mathfrak{f}^1(\cdot),A)$ and $(\xi^2_T,\mathfrak{f}^2(\cdot),A)$ be two sets of data satisfying assumption \textbf{(H)}. Let $(Y^1,M^1)$ and $(Y^2,M^2)$ be two solutions of the GBSDE \eqref{GBSDE} associated with $(\xi^1_T,\mathfrak{f}^1(\cdot),A)$ and $(\xi^2_T,\mathfrak{f}^2(\cdot),A)$, respectively. Then there exists a constant $\mathfrak{C}_{\beta}>0$ such that
	\begin{equation*}
		\begin{split}
			&\mathbb{E}\left[\sup_{0\leq t\leq T}\mathcal{E}(\beta A)_t |Y^1_t-Y^2_t|^2\right]
			+\mathbb{E}\left[\int_{0}^{T}\mathcal{E}(\beta A)_s |Y^1_{s-}-Y^2_{s-}|^2\,dA_s\right] 			
			+\mathbb{E}\left[\int_{0}^{T}\mathcal{E}(\beta A)_s\,d[M^1-M^2]_s\right] \\
			&\leq
			\mathfrak{C}_{\beta,\mathfrak{C}_{A}}
			\left(
			\mathbb{E}\left[\mathcal{E}(\beta A)_T |\xi^1_T-\xi^2_T|^2\right]
			+
			\mathbb{E}\left[\int_{0}^{T}\mathcal{E}(\beta A)_s
			|\mathfrak{f}^1(s)-\mathfrak{f}^2(s)|^2\,dA_s\right]
			\right).
		\end{split}
	\end{equation*}
\end{proposition}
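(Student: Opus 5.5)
The plan is to reduce to Proposition~\ref{GBSDE-propo1} by linearity. Set
\[
\delta Y:=Y^1-Y^2,\qquad \delta M:=M^1-M^2,\qquad \delta\xi_T:=\xi^1_T-\xi^2_T,\qquad \delta\mathfrak{f}:=\mathfrak{f}^1-\mathfrak{f}^2 .
\]
Subtracting the two equations \eqref{GBSDE.Indp.y} gives, almost surely for all $t\in[0,T]$,
\[
\delta Y_t=\delta\xi_T+\int_t^T\delta\mathfrak{f}(s)\,dA_s-\left(\delta M_T-\delta M_t\right).
\]
So $(\delta Y,\delta M)$ solves a GBSDE of the same type with data $(\delta\xi_T,\delta\mathfrak{f},A)$.

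The next step is to verify that these data satisfy \textbf{(H)}. Measurability of $\delta\mathfrak{f}$ is inherited from $\mathfrak{f}^1,\mathfrak{f}^2$. For integrability we use $|a-b|^2\le 2|a|^2+2|b|^2$, which gives $\mathbb{E}[\mathcal{E}(\beta A)_T|\delta\xi_T|^2]<+\infty$ and $\mathbb{E}[\int_0^T\mathcal{E}(\beta A)_s|\delta\mathfrak{f}(s)|^2\,dA_s]<+\infty$. Hypothesis \textbf{(H3)} concerns only $A$, which is common to both equations. Moreover $\mathcal{S}^2_\beta$ and $\mathcal{M}^2_\beta$ are vector spaces, so $(\delta Y,\delta M)\in\mathcal{S}^2_\beta\times\mathcal{M}^2_\beta$ and $\delta M_0=0$. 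Hence $(\delta Y,\delta M)$ is a solution in the sense of Definition~\ref{def-GBSDE}.

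Applying Proposition~\ref{GBSDE-propo1} to $(\delta Y,\delta M)$ then yields the stated estimate directly, with the same constant $\mathfrak{C}_{\beta,\mathfrak{C}_A}$. This uses $[\delta M]=[M^1-M^2]$ and $\delta Y_{s-}=Y^1_{s-}-Y^2_{s-}$.

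There is no real obstacle; the only point needing care is that Proposition~\ref{GBSDE-propo1} uses nothing beyond linearity of the equation and these integrability properties. In particular, Lemmas~\ref{ouma} and \ref{ouma1} apply to $(\delta Y,\delta M)$, and Lemma~\ref{ouma1} holds for the $y$-independent generator $\delta\mathfrak{f}$ since it satisfies (H2)(iii). Alternatively, one can redo the Itô computation \eqref{Ito1}--\eqref{Ito-r6} verbatim for $\mathcal{E}(\beta A)_t|\delta Y_t|^2$, replacing $\xi_T$ by $\delta\xi_T$, $\mathfrak{f}$ by $\delta\mathfrak{f}$ and $M$ by $\delta M$.
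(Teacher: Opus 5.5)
Your proposal is correct and is essentially the paper's proof: the paper likewise subtracts the two equations to obtain a GBSDE for $(\bar Y,\bar M)$ with data $(\bar\xi_T,\bar{\mathfrak f},A)$, then repeats the argument of Proposition~\ref{GBSDE-propo1}. Your explicit check that the difference data satisfy \textbf{(H)} and that $(\delta Y,\delta M)\in\mathcal{S}^2_\beta\times\mathcal{M}^2_\beta$ is a detail the paper leaves implicit; it lets you invoke Proposition~\ref{GBSDE-propo1} directly rather than re-running its argument.
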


\begin{proof}
	For any quantity $\mathfrak{R}\in\{\xi_T,\mathfrak{f},Y,M\}$, we set
	$$
	\bar{\mathfrak{R}}:=\mathfrak{R}^1-\mathfrak{R}^2.
	$$
	Then, by subtracting the two GBSDEs, the process $\bar{Y}$ satisfies
	\begin{equation*}
		\bar{Y}_t
		=
		\bar{\xi}_T
		+
		\int_t^T
		\bar{\mathfrak{f}}(s)
		\,dA_s
		-
		\left(\bar{M}_T-\bar{M}_t\right),
		\quad t\in[0,T].
	\end{equation*}
	We then follow the proof of Proposition \ref{GBSDE-propo1}. 
	
	Replacing the corresponding estimate in the proof of Proposition \ref{GBSDE-propo1} by the above inequality and repeating the same arguments yield the desired estimate.
\end{proof}

As a consequence of Proposition \ref{GBSDE-propo2}, we obtain the following uniqueness result.
\begin{corollary}\label{GBSDE-uniq}
	Assume that the data $(\xi_T,\mathfrak{f}(\cdot),A)$ satisfy \textbf{(H)}. Then there exists at most one pair of processes $(Y,M)\in \mathcal{S}^2_\beta \times \mathcal{M}^2_\beta$, solving the GBSDE \eqref{GBSDE.Indp.y} associated with $(\xi_T,\mathfrak{f}(\cdot),A)$.
\end{corollary}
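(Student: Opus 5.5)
We apply Proposition~\ref{GBSDE-propo2} with identical data on both sides. Let $(Y^1,M^1)$ and $(Y^2,M^2)$ in $\mathcal{S}^2_\beta\times\mathcal{M}^2_\beta$ be two solutions of \eqref{GBSDE.Indp.y} associated with the same triplet $(\xi_T,\mathfrak{f}(\cdot),A)$. Taking $\xi^1_T=\xi^2_T=\xi_T$ and $\mathfrak{f}^1=\mathfrak{f}^2=\mathfrak{f}$, the right-hand side of the estimate in Proposition~\ref{GBSDE-propo2} vanishes. Hence
\[
\mathbb{E}\left[\sup_{0\leq t\leq T}\mathcal{E}(\beta A)_t|Y^1_t-Y^2_t|^2\right]
+\mathbb{E}\left[\int_0^T\mathcal{E}(\beta A)_s\,d[M^1-M^2]_s\right]=0 .
\]

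From the first term: since $\mathcal{E}(\beta A)_t\geq 1$ by Remark~\ref{rmq4}-(i), we get $\sup_{t}|Y^1_t-Y^2_t|=0$ almost surely. So $Y^1$ and $Y^2$ are indistinguishable.

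From the second term: again using $\mathcal{E}(\beta A)\geq1$, we get $\mathbb{E}[[M^1-M^2]_T]=0$. Since $M^1-M^2$ is a square-integrable martingale null at zero, the isometry $\mathbb{E}[(M^1_t-M^2_t)^2]=\mathbb{E}[[M^1-M^2]_t]=0$ gives $M^1_t=M^2_t$ almost surely for each $t$. Right-continuity, or Doob's inequality $\mathbb{E}[\sup_t|M^1_t-M^2_t|^2]\leq 4\,\mathbb{E}[[M^1-M^2]_T]$, then upgrades this to indistinguishability.

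Alternatively, once $Y^1=Y^2$ is known, the martingale parts can be identified directly from the equation: $M^1_t-M^2_t=Y^1_t-Y^2_t-(Y^1_0-Y^2_0)=0$.

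The only point that needs care is to confirm that the hypotheses of Proposition~\ref{GBSDE-propo2} are met. Both solutions must lie in $\mathcal{S}^2_\beta\times\mathcal{M}^2_\beta$, which is part of the assumption, so that Lemmas~\ref{ouma} and \ref{ouma1} apply to the difference. There is no genuine obstacle beyond this check.
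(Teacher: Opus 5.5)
Your proposal is correct and follows the paper's own route: the paper states this corollary as a direct consequence of Proposition~\ref{GBSDE-propo2} with identical data, so the right-hand side vanishes and, since $\mathcal{E}(\beta A)\geq 1$, both $Y^1-Y^2$ and $M^1-M^2$ vanish up to indistinguishability. After Theorem~\ref{GBSDE-special_thm}, the paper also remarks that the a priori estimate can be bypassed entirely, since the difference of two solutions satisfies $\overline{Y}_t=\mathbb{E}[\overline{Y}_T\mid\mathcal{F}_t]=0$.
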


It is worth mentioning that the uniqueness statement in Corollary \ref{GBSDE-uniq} can also be recovered through a different argument, namely by means of a comparison principle. This method relies on a direct examination of the trajectories of the process $Y$ and requires an additional monotonicity assumption on the generator $f$. The proof is inspired by the strategy developed in Proposition 3.6 of \cite{li2024penalization} in the particular case where the driver $A$ is continuous.

\begin{proposition}\label{GBSDE-propo3}
	Let $\xi^1_T$ and $\xi^2_T$ be two terminal conditions\footnote{In the sense of condition \textbf{(H1)$_T$} above. In the present comparison result, only the measurability condition \textbf{(H1)$_T$}-(i) is needed; no integrability assumption is used in the proof.}, and let
	$\mathfrak{f}^1,\mathfrak{f}^2:\Omega\times[0,T]\rightarrow\mathbb{R}$
	be two generators satisfying condition \textbf{(H2)}-(i). Assume that the GBSDE \eqref{GBSDE} admits two solutions $(Y^1,M^1)$ and $(Y^2,M^2)$ associated with the data $(\xi^1_T,\mathfrak{f}^1(\cdot),A)$ and $(\xi^2_T,\mathfrak{f}^2(\cdot),A)$, respectively. Suppose that
	\begin{itemize}
		\item[(i)] $
		\xi^1_T\geq \xi^2_T$, $\mathbb{P}\text{-a.s.}$
		
		\item[(ii)] $\mathfrak{f}^1(\omega,t)\geq \mathfrak{f}^2(\omega,t)$ for every $(\omega,t,y)\in\Omega\times[0,T]\times\mathbb{R}$;
	\end{itemize}
	Then
	$$
	Y^1_t\geq Y^2_t,
	\quad \forall t\in[0,T],
	\quad \mathbb{P}\text{-a.s.}
	$$
\end{proposition}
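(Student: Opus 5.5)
Since the generators do not depend on $y$, the difference $\bar Y:=Y^1-Y^2$ solves a linear equation with a nonnegative forcing term. No It\^o formula or integrability argument is needed: we compare paths, in the spirit of Proposition~3.6 of \cite{li2024penalization}. Set $\bar\xi_T:=\xi^1_T-\xi^2_T\ge 0$, $\bar{\mathfrak f}:=\mathfrak f^1-\mathfrak f^2\ge 0$ and $\bar M:=M^1-M^2$. Subtracting the two equations gives, for all $t\in[0,T]$ a.s.,
$$
\bar Y_t=\bar\xi_T+\int_t^T\bar{\mathfrak f}(s)\,dA_s-(\bar M_T-\bar M_t).
$$
Hence the process
$$
\bar Y_t+\int_0^t\bar{\mathfrak f}(s)\,dA_s=\bar\xi_T+\int_0^T\bar{\mathfrak f}(s)\,dA_s-\bar M_T+\bar M_t
$$
is, up to the $\mathcal F_T$-measurable constant, the local martingale $\bar M$ (a difference of two martingales). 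Because the hypotheses include no integrability, I will argue by localization rather than by taking conditional expectations directly.

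First, fix $t$ and a reducing (localizing) sequence $\sigma_n\uparrow T$ stationary at $T$ for $\bar M$. On $[t,\sigma_n]$,
$$
\bar Y_t=\bar Y_{\sigma_n}+\int_t^{\sigma_n}\bar{\mathfrak f}\,dA-(\bar M_{\sigma_n}-\bar M_t)\ \ge\ \bar Y_{\sigma_n}-(\bar M_{\sigma_n}-\bar M_t),
$$
using $\bar{\mathfrak f}\ge0$ and $dA\ge0$. Since $M^1$ and $M^2$ are genuine martingales in the solution sense, I will in fact just take conditional expectations with respect to $\mathcal F_t$, as in Remark~\ref{Otage}. This gives
$$
\bar Y_t=\mathbb E\Bigl[\bar\xi_T+\int_t^T\bar{\mathfrak f}(s)\,dA_s\,\Big|\,\mathcal F_t\Bigr]\ge0\quad\text{a.s.}
$$
The conditional expectation is justified because $Y^i_t$ is integrable and $\bar M_T-\bar M_t$ has zero conditional mean. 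The integrability of $\int_t^T\bar{\mathfrak f}\,dA$ follows by difference from the equation. For nonnegative integrands I would use generalized conditional expectation, so nothing beyond the martingale property of $M^i$ is needed.

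Second, I will pass from "for each $t$, a.s." to "a.s., for all $t$". Take a countable dense set $D\subset[0,T]$ containing $T$. Outside a null set, $\bar Y_t\ge0$ for every $t\in D$. Since $Y^1$ and $Y^2$ are RCLL, $\bar Y$ is right-continuous, so $\bar Y_t=\lim_{D\ni s\downarrow t}\bar Y_s\ge0$ for all $t<T$. At $t=T$ we have $\bar Y_T=\bar\xi_T\ge0$ directly.

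The main obstacle is the integrability issue in the first step. Only the measurability condition \textbf{(H1)$_T$}-(i) is assumed, so $\int\bar{\mathfrak f}\,dA$ might fail to be integrable and $\bar M$ is only known as a difference of martingales. If the generalized conditional expectation route seems delicate, I would first localize with $\tau_n:=\inf\{s\ge t:\int_t^s\bar{\mathfrak f}\,dA\ge n\}\wedge T$. I would then apply optional stopping to $\bar M$ on $[t,\tau_n]$ to get $\bar Y_t\ge\mathbb E[\bar Y_{\tau_n}\mid\mathcal F_t]$. Finally I would let $n\to\infty$, using $\tau_n=T$ eventually together with the uniform integrability of the family $\{M^i_{\tau_n}\}$, which holds because they are conditional expectations of the closing variable $M^i_T$.
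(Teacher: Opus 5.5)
Your proof is correct, but it takes a different route from the paper.

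The paper argues by contradiction, following Proposition~3.6 in \cite{li2024penalization}:
\begin{itemize}
\item it sets $\tau^{\varepsilon}=\inf\{t: Y^1_t\le Y^2_t-\varepsilon\}$ and supposes that $E=\{\tau^\varepsilon<T\}$ has positive probability;
\item it introduces the return time $\nu=\inf\{t\ge\tau: Y^1_t\ge Y^2_t\}$ and writes the difference equation between $\tau$ and $\nu$ on $E$;
\item it takes $\mathbb{E}[\,\cdot\mid\mathcal F_\tau]$ to get $Y^1_\tau\ge Y^2_\tau$ on $E$, which contradicts $Y^1_\tau\le Y^2_\tau-\varepsilon$ there.
\end{itemize}

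You instead use the explicit representation $\bar Y_t=\mathbb E[\bar\xi_T+\int_t^T\bar{\mathfrak f}(s)\,dA_s\mid\mathcal F_t]\ge0$ at each deterministic $t$. You then pass to all $t$ simultaneously by right-continuity on a countable dense set.

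Both arguments rest on the same fact: $\bar Y+\int_0^\cdot\bar{\mathfrak f}\,dA=\bar Y_0+\bar M$ is a martingale, so $\bar Y$ is a supermartingale with nonnegative terminal value. The paper's own footnote notes this. Both use only the true martingale property of $M^1$ and $M^2$.

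Your version is shorter and needs no stopping-time construction. The paper's localization to $\rrbracket\tau,\nu\rrbracket$, where $Y^1_{-}<Y^2_{-}$, gives a template that extends to $y$-dependent generators. There one only needs the sign of $f^1(s,Y^1_{s-})-f^2(s,Y^2_{s-})$ on $\{Y^1_{s-}<Y^2_{s-}\}$, e.g.\ under a monotonicity condition as in Proposition~\ref{comp_pena}. For the $y$-independent generators of the present statement, that localization is superfluous.

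Two minor points.
\begin{enumerate}
\item Your integrability concerns are settled by Definition~\ref{def-GBSDE}. A solution lies in $\mathcal S^2_\beta\times\mathcal M^2_\beta$, so $\bar Y_t$, $\bar\xi_T=\bar Y_T$ and $\bar M$ are all square-integrable. The localizations with $\sigma_n$ and $\tau_n$, and the generalized conditional expectation, can therefore simply be dropped.
\item The ``constant'' in your first display is $\bar\xi_T+\int_0^T\bar{\mathfrak f}\,dA-\bar M_T=\bar Y_0$. It is $\mathcal F_0$-measurable because $\bar M_0=0$, not merely $\mathcal F_T$-measurable. An $\mathcal F_T$-measurable shift would not preserve the (local) martingale property.
\end{enumerate}
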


\begin{proof}
	We follow the proof of Proposition 3.6 in \cite{li2024penalization}. We also refer to \cite[Lemma 8.3]{Peng2004NonlinearEvaluations}.
	
	For a fixed $\varepsilon>0$, we consider the $\mathbb{F}$-stopping time
	$$
	\tau^{\varepsilon}:=\inf \left\{t \geqslant 0: Y^1_t \leqslant {Y}^2_t-\varepsilon\right\},
	$$
	with the convention that $\inf \emptyset=T$. By the argument used in the proof of Proposition 3.6 in \cite{li2024penalization}, if for every $\varepsilon>0$ we have $\mathbb{P}\left(\tau^{\varepsilon}=T\right)=1$, then
	$$
	Y^1_t\geq Y^2_t,\quad \forall t\in[0,T],\quad \mathbb{P}\text{-a.s.}
	$$
	
	We now argue by contradiction. Assume that the inequality $Y^1\geqslant Y^2$ does not hold. Then, by the previous step, there exists $\varepsilon>0$ such that $\mathbb{P}(E)>0$, where
	$
	E:=\left\{\tau^{\varepsilon}<T\right\}\in \mathcal{F}_{\tau^{\varepsilon}}.
	$
	We fix such an $\varepsilon$ and define
	$
	\tau:=\tau^{\varepsilon}\mathds{1}_E+T\mathds{1}_{E^c}
	$
	and
	$
	\nu:=\inf \left\{t \geqslant \tau: Y^1_t \geqslant Y^2_t\right\}.
	$
	Then $\nu\leqslant T$, since $\xi^1_T\geqslant \xi^2_T$. Since $Y^1$ and $Y^2$ are RCLL processes, the interval $\llbracket \tau,\nu \llbracket$ is nonempty on $E$, because $Y^1_\tau<Y^2_\tau$ on $E$. Moreover, the inequality $Y^1_\nu\geqslant Y^2_\nu$ holds, and we have
	$
	Y^1_{t-}<Y^2_{t-}$, $t\in]\tau,\nu]$, \text{a.s.}

	Let us set $\bar{f}:=f^1-f^2$ and $\bar{M}:=M^1-M^2$. Using the GBSDE \eqref{GBSDE}, we can write
	\begin{equation}\label{DM1}
		\begin{split}
			(Y^1_\tau-Y^2_\tau)\mathds{1}_E
			&=(Y^1_\nu-Y^2_\nu)\mathds{1}_E
			+\int_{\tau}^{\nu} \bar{\mathfrak{f}}(s)\mathds{1}_E\,dA_s 
			-\int_{\tau}^{\nu}\mathds{1}_E\,d\bar{M}_s .
		\end{split}
	\end{equation}
	From the preceding arguments and assumptions  (ii) on $\mathfrak{f}^1(\cdot)$ and $\mathfrak{f}^2(\cdot)$, we deduce that $\mathds{1}_E\bar{\mathfrak{f}}(\cdot)$ is predictable and nonnegative on $\rrbracket \tau,\nu \rrbracket$. Therefore, after taking the conditional expectation with respect to $\mathcal{F}_\tau$ in \eqref{DM1}, and using the fact that $E\in\mathcal{F}_{\tau^\varepsilon}\subseteq\mathcal{F}_\tau$, we obtain
	$
	Y^1_\tau\geq Y^2_\tau$, \text{a.s. on } $E$.\footnote{Indeed, the process $Y^1-Y^2$ is an RCLL supermartingale on $\llbracket \tau,\nu \rrbracket$. Its Doob--Meyer decomposition (see Theorem 8 in \cite[p. 111]{Protter2004}) is given by \eqref{DM1}, where the predictable finite variation part is nondecreasing on $\rrbracket \tau,\nu \rrbracket$ by assumptions (i) and (ii).}
	This contradicts our assumption. Hence the claim follows.
\end{proof}

\begin{remark}
Proposition \ref{GBSDE-propo3} can also be proved by applying the Meyer--Tanaka formula, see, e.g., Theorem 68 in \cite[p.~213]{Protter2004}, to the process $(Y^2-Y^1)^+$ on $[t,T]$, for any fixed $t\in[0,T]$. 
\end{remark}

Under the assumptions imposed on the data $(\xi_T,\mathfrak{f}(\cdot),A)$ of the GBSDE \eqref{GBSDE}, we obtain the following uniqueness result.

\begin{corollary}
	Let $\xi_T$ be a terminal condition, and let $\mathfrak{f}(\cdot)$ be a generator that is independent of $y$, as in Proposition \ref{GBSDE-propo3}. If a solution $(Y,M)$ to the GBSDE \eqref{GBSDE.Indp.y} associated with $(\xi_T,\mathfrak{f}(\cdot),A)$ exists, then it is unique.
\end{corollary}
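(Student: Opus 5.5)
The plan is to obtain uniqueness from the comparison principle in Proposition \ref{GBSDE-propo3}, applied twice. This route needs no integrability assumptions. Let $(Y^1,M^1)$ and $(Y^2,M^2)$ be two solutions of the GBSDE \eqref{GBSDE.Indp.y} associated with the same data $(\xi_T,\mathfrak{f}(\cdot),A)$.

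First take $\xi^1_T=\xi^2_T=\xi_T$ and $\mathfrak{f}^1=\mathfrak{f}^2=\mathfrak{f}$. Conditions (i) and (ii) of Proposition \ref{GBSDE-propo3} then hold trivially, as equalities, and the generator satisfies \textbf{(H2)}-(i). The proposition gives $Y^1_t\geq Y^2_t$ for all $t\in[0,T]$, $\mathbb{P}$-a.s. Exchanging the roles of the two solutions gives the reverse inequality. Hence $Y^1$ and $Y^2$ are indistinguishable, since both are RCLL and the inequalities hold simultaneously for all $t$ outside a null set.

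Next we identify the martingale parts. Subtract the two equations \eqref{GBSDE.Indp.y}. Since the generator does not depend on $y$, the terms $\int_t^T\mathfrak{f}(s)\,dA_s$ cancel exactly, and we get $0=Y^1_t-Y^2_t=-\bigl((M^1_T-M^2_T)-(M^1_t-M^2_t)\bigr)$ for all $t$, a.s. Thus $M^1-M^2$ is constant in time. It vanishes at $0$, because $M^i_0=0$ by the convention on $\mathcal{M}^2_\beta$; if one does not impose that normalization, the normalization $M_0=0$ from Remark \ref{Otage} plays the same role. Therefore $M^1=M^2$ up to evanescence.

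The only delicate point is that Proposition \ref{GBSDE-propo3} is stated for solutions that exist in the pathwise sense with RCLL $Y^i$. This is precisely what is assumed here, so the argument goes through. I would also note that Corollary \ref{GBSDE-uniq} already gives uniqueness in $\mathcal{S}^2_\beta\times\mathcal{M}^2_\beta$ under \textbf{(H)}. The present argument removes the integrability requirements, which is the point of the statement.
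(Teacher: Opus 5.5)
Your proposal is correct and is essentially the paper's proof: Proposition~\ref{GBSDE-propo3}, applied in both directions (the paper leaves the symmetric application implicit), gives $Y^1=Y^2$. Returning to the equation then forces $M^1=M^2$, and you rightly note that it is the normalization $M^i_0=0$ built into $\mathcal{M}^2_\beta$ that kills the constant $M^1-M^2$. One small correction to your side remark: the martingale constructed in Remark~\ref{Otage} satisfies $M_0=Y_0$, not $M_0=0$, although any normalization fixing $M_0$ as a function of $Y_0$ would serve equally well.
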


\begin{proof}
	Let $(Y^1,M^1)$ and $(Y^2,M^2)$ be two solutions to the GBSDE \eqref{GBSDE} associated with the same data $(\xi_T,f,A)$. By Proposition \ref{GBSDE-propo3}, we obtain
	$Y^1_t=Y^2_t$, $\forall t\in[0,T]$, \text{a.s.}\\
	Returning to the GBSDE \eqref{GBSDE}, it follows that
	$M^1_t=M^2_t$, $\forall t\in[0,T]$, \text{a.s.}
	
	Hence the solution is unique.
	
\end{proof}

\subsubsection{Existence}

\begin{theorem}\label{GBSDE-special_thm}
	Assume that the data \((\xi_T,\mathfrak{f},A)\) satisfy
	assumption \textbf{(H)}. Then, the GBSDE \eqref{GBSDE}
	admits a unique solution $
	(Y,M)\in\mathcal{S}_{\beta}^{2}
	\times\mathcal{M}_{\beta}^{2}.
	$ Moreover, \(Y\in\mathcal{S}_{\beta}^{2,A}\).
\end{theorem}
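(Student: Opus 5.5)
Since $\mathfrak{f}$ does not depend on $y$, the plan is to use the conditional-expectation representation of Remark \ref{Otage}. Uniqueness is already Corollary \ref{GBSDE-uniq}, so only existence in $\mathcal{S}^2_\beta\times\mathcal{M}^2_\beta$ and the membership $Y\in\mathcal{S}^{2,A}_\beta$ remain.

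\emph{Step 1: integrability of the data.} Set
\[
\Xi:=\xi_T+\int_0^T\mathfrak{f}(s)\,dA_s .
\]
By Cauchy--Schwarz, and using $\mathcal{E}(\beta A)\geq 1$ together with \eqref{code},
\[
\Big(\int_0^T|\mathfrak{f}(s)|\,dA_s\Big)^2\leq \int_0^T\frac{dA_s}{\mathcal{E}(\beta A)_s}\int_0^T\mathcal{E}(\beta A)_s|\mathfrak{f}(s)|^2\,dA_s\leq \frac1\beta\int_0^T\mathcal{E}(\beta A)_s|\mathfrak{f}(s)|^2\,dA_s .
\]
Combined with \textbf{(H1)$_T$} and \textbf{(H2)}-(iii), this gives $\Xi\in\mathbb{L}^2$. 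Note that $\int_0^t\mathfrak{f}\,dA$ is RCLL, predictable and of finite variation.

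\emph{Step 2: construction.} Let $M'$ be the RCLL version of the square-integrable martingale $\mathbb{E}[\Xi\mid\mathcal{F}_t]$; the usual conditions guarantee this version exists. Set $M:=M'-M'_0$ and
\[
Y_t:=M'_t-\int_0^t\mathfrak{f}(s)\,dA_s .
\]
Then $Y$ is RCLL and adapted, $Y_T=\xi_T$, and subtracting the identity at $t$ from the one at $T$ yields \eqref{GBSDE.Indp.y} for all $t$ simultaneously, almost surely, because both sides are RCLL.

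\emph{Step 3: norms.} Doob's inequality gives
\[
\mathbb{E}\Big[\sup_t|Y_t|^2\Big]\leq 2\,\mathbb{E}\Big[\sup_t|M'_t|^2\Big]+2\,\mathbb{E}\Big[\Big(\int_0^T|\mathfrak{f}|\,dA\Big)^2\Big]<\infty .
\]
By \eqref{bounded_exp_A} (this is where \textbf{(H3)} enters), $\mathcal{E}(\beta A)\leq e^{\beta\mathfrak{C}_A}$. Hence $Y\in\mathcal{S}^2_\beta$, using that for RCLL $Y$ the essential supremum over stopping times coincides with the pathwise supremum. Likewise $\mathbb{E}[\int_0^T\mathcal{E}(\beta A)_s\,d[M]_s]\leq e^{\beta\mathfrak{C}_A}\mathbb{E}[[M]_T]<\infty$, so $M\in\mathcal{M}^2_\beta$. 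Finally,
\[
\mathbb{E}\Big[\int_0^T\mathcal{E}(\beta A)_s|Y_{s-}|^2\,dA_s\Big]\leq e^{\beta\mathfrak{C}_A}\,\mathfrak{C}_A\,\mathbb{E}\Big[\sup_s|Y_s|^2\Big]<\infty,
\]
which gives $Y_-\in\mathcal{S}^{2,A}_\beta$, interpreting $Y\in\mathcal{S}^{2,A}_\beta$ through its predictable left limits. The same bound also follows directly from Proposition \ref{GBSDE-propo1}.

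\emph{Main obstacle.} The main technical point is Step 1: the integrability of $\int_0^T|\mathfrak{f}|\,dA$ in the $\mathcal{E}(\beta A)$-weighted setting with jumps. The identity \eqref{code} resolves it cleanly, even without \textbf{(H3)}. Everything else is routine once boundedness of $\mathcal{E}(\beta A)$ makes the weighted and unweighted norms equivalent. If one prefers to avoid \textbf{(H3)} in Step 3, the a priori estimate of Proposition \ref{GBSDE-propo1} can be used instead, after localization by the stopping times $\tau_n$ of Remark \ref{rmq4}.
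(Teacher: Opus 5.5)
Your proof is correct and is essentially the paper's own argument. Like the paper, you take the RCLL version of $\mathbb{E}[\xi_T+\int_0^T\mathfrak{f}(s)\,dA_s\mid\mathcal{F}_t]$, let $M$ be this martingale minus its initial value, and set $Y=Y_0-\int_0^\cdot\mathfrak{f}(s)\,dA_s+M$; you use \eqref{code} for square integrability, \eqref{bounded_exp_A} (that is, \textbf{(H3)}) to pass to the weighted norms, and Corollary \ref{GBSDE-uniq} for uniqueness. Drop your closing aside, though: Proposition \ref{GBSDE-propo1} is itself proved under \textbf{(H3)} (through \eqref{essential condition-r1}, and its constant depends on $\mathfrak{C}_A$), so invoking it after localization would not remove that assumption --- but the aside plays no role in proving the theorem as stated.
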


\begin{proof}
	The uniqueness follows directly from Corollary \ref{GBSDE-uniq}.
	
	We now prove existence. Define the state process \(Y\) by
	\begin{equation}\label{GBSDE.write.Y.E}
		Y_t
		:=
		\mathbb{E}\left[
		\xi_T+\int_t^T\mathfrak{f}(s)\,dA_s
		\Bigm|\mathcal{F}_t
		\right],
		\qquad t\in[0,T].
	\end{equation}
	Equivalently,
	\[
	Y_t+\int_0^t\mathfrak{f}(s)\,dA_s
	=
	\mathbb{E}\left[
	\xi_T+\int_0^T\mathfrak{f}(s)\,dA_s
	\Bigm|\mathcal{F}_t
	\right].
	\]
	
	We therefore define
	\begin{equation}\label{GBSDE.write.M.E}
		\begin{split}
			M_t
			&:=
			Y_t-Y_0+\int_0^t\mathfrak{f}(s)\,dA_s \\
			&=
			\mathbb{E}\left[
			\xi_T+\int_0^T\mathfrak{f}(s)\,dA_s
			\Bigm|\mathcal{F}_t
			\right]-Y_0,
			\qquad t\in[0,T].
		\end{split}
	\end{equation}
	Clearly, \(M_0=0\). Moreover, under the usual conditions on the
	filtration, \(M\) admits an RCLL modification; see, for instance,
	\cite[Theorem 9, p.~8]{Protter2004}.
	
	By assumptions \textbf{(H1)}\(_T\), \textbf{(H2)}-(iii), and
	\textbf{(H3)}, the random variable
	\[
	\xi_T+\int_0^T\mathfrak{f}(s)\,dA_s
	\]
	is square-integrable. Hence, \(M\) is a square-integrable
	martingale.
	
	Furthermore, by Doob's quadratic maximal inequality, Young's inequality,
	and estimates \eqref{code} and \eqref{bounded_exp_A}, there exists a constant
	\(C_{\beta,\mathfrak{C}_A}>0\) such that
	\begin{equation}\label{M.In}
		\begin{split}
			\mathbb{E}\left[
			\sup_{0\leq t\leq T}
			\mathcal{E}(\beta A)_t|M_t|^2
			\right]
			&\leq
			C_{\beta,\mathfrak{C}_A}
			\mathbb{E}\left[
			\mathcal{E}(\beta A)_T|\xi_T|^2
			+
			\int_0^T
			\mathcal{E}(\beta A)_s
			|\mathfrak{f}(s)|^2\,dA_s
			\right] 
			<+\infty.
		\end{split}
	\end{equation}
	Consequently, \(M\in\mathcal{M}_{\beta}^{2}\).
	
	From \eqref{GBSDE.write.M.E}, we have
	\[
	Y_t
	=
	Y_0
	-\int_0^t\mathfrak{f}(s)\,dA_s
	+M_t.
	\]
	Since \(M\) is RCLL and
	\(t\mapsto\int_0^t\mathfrak{f}(s)\,dA_s\) is RCLL, it follows
	that \(Y\) is also RCLL. Moreover, using the preceding identity
	together with \eqref{M.In}, we obtain
	\[
	\mathbb{E}\left[
	\sup_{0\leq t\leq T}
	\mathcal{E}(\beta A)_t|Y_t|^2
	\right]
	<+\infty.
	\]
	Hence,
	\[
	Y\in\mathcal{S}_{\beta}^{2}.
	\]
	
	Setting \(t=T\) in \eqref{GBSDE.write.Y.E} yields
	\[
	Y_T=\xi_T.
	\]
	On the other hand, evaluating \eqref{GBSDE.write.M.E} at \(T\)
	and subtracting its value at \(t\), we obtain
	\[
	M_T-M_t
	=
	\xi_T-Y_t+\int_t^T\mathfrak{f}(s)\,dA_s.
	\]
	Therefore,
	\[
	Y_t
	=
	\xi_T
	+\int_t^T\mathfrak{f}(s)\,dA_s
	-\left(M_T-M_t\right),
	\qquad t\in[0,T].
	\]
	Thus, \((Y,M)\) is a solution of the GBSDE \eqref{GBSDE}.
	
	In addition, assumption \textbf{(H3)} gives
	$
	Y\in\mathcal{S}_{\beta}^{2,A}.
	$
	
	Consequently, the GBSDE \eqref{GBSDE} admits a unique solution
	\[
	(Y,M)\in
	\mathcal{S}_{\beta}^{2}
	\times\mathcal{M}_{\beta}^{2}.
	\]
	Completing the proof.
\end{proof}

The uniqueness of Theorem \ref{GBSDE-special_thm} can be obtained using  Corollary \ref{GBSDE-uniq}, or it also can be derived following the proof of the same Theorem. Indeed, from the arguments adopted, we consider
	\((Y^1,M^1)\) and \((Y^2,M^2)\) to be two solutions of
	\eqref{GBSDE.Indp.y}. Set
	\[
	\overline{Y}:=Y^1-Y^2,
	\qquad
	\overline{M}:=M^1-M^2.
	\]
	Since both equations have the same terminal condition and the same
	driver, we have
	\[
	\overline{Y}_t
	=
	-\left(\overline{M}_T-\overline{M}_t\right),
	\qquad t\in[0,T],
	\]
	with
	\[
	\overline{Y}_T=0.
	\]
	Taking the conditional expectation with respect to
	\(\mathcal{F}_t\), and using the martingale property of
	\(\overline{M}\), yields
	\[
	\bar{Y}_t
	=
	\mathbb{E}\left[
	\overline{Y}_T
	\mid\mathcal{F}_t
	\right]
	=0.
	\]
	Therefore, since $\bar{Y}$ has RCLL path, we get
	\[
	Y^1=Y^2
	\]
	in the indistinguishable sense; see, for instance,
	\cite[Theorem 2, p.~4]{Protter2004}.. Substituting this equality into
	the two GBSDEs gives
	\[
	M^1=M^2.
	\]

\subsection{General case of the generator $f$}


\begin{theorem}\label{GBSDE-general_thm}
	The GBSDE \eqref{GBSDE} associated with $(\xi_T,f,A)$ satisfying assumption \textbf{(H)} admits a unique solution $(Y,M)$ in the space $\mathcal{S}^{2}_{\beta}  \times \mathcal{M}^2_{\beta}$.
\end{theorem}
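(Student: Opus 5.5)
We argue by a Picard fixed-point argument in the Banach space $\mathcal{S}^{2,A}_{\beta}$ (or its product with $\mathcal{M}^2_\beta$), with $\beta$ chosen large enough to make the solution map a strict contraction.

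\textbf{Definition of the map.} Given $U\in\mathcal{S}^{2,A}_{\beta}$, a predictable process, set $\mathfrak{f}^U(s):=f(s,U_s)$. By \textbf{(H2)}-(i), $\mathfrak{f}^U$ is predictable; this is the same composition argument as in Remark \ref{rmq1}, with $U$ in place of $Y_-$. By \textbf{(H2)}-(ii),(iii),
\[
\mathbb{E}\Bigl[\int_0^T\mathcal{E}(\beta A)_s|\mathfrak{f}^U(s)|^2dA_s\Bigr]\le 2K^2\|U\|^2_{\mathcal{S}^{2,A}_\beta}+2\,\mathbb{E}\Bigl[\int_0^T\mathcal{E}(\beta A)_s|f(s,0)|^2dA_s\Bigr]<+\infty.
\]
Theorem \ref{GBSDE-special_thm} therefore yields a unique solution $(Y^U,M^U)\in\mathcal{S}^2_\beta\times\mathcal{M}^2_\beta$ of \eqref{GBSDE.Indp.y} with driver $\mathfrak{f}^U$. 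We define $\Phi(U):=Y^U_-$. This process is predictable, and it lies in $\mathcal{S}^{2,A}_\beta$ by Proposition \ref{GBSDE-propo1} (the $dA$-term there). A fixed point $U=Y^U_-$ means exactly that $(Y^U,M^U)$ solves \eqref{GBSDE}. Conversely, any solution $(Y,M)$ of \eqref{GBSDE} gives a fixed point $Y_-$. Uniqueness of the fixed point then gives uniqueness of $Y_-$ in $\mathcal{S}^{2,A}_\beta$, and hence uniqueness of $(Y,M)$ via Corollary \ref{GBSDE-uniq} applied with the common driver $f(\cdot,Y_-)$.

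\textbf{Contraction estimate.} For $U^1,U^2$, I would redo the Itô computation of Proposition \ref{GBSDE-propo1} on $\bar Y=Y^{U^1}-Y^{U^2}$, with $\bar\xi_T=0$ and $|\bar{\mathfrak{f}}|\le K|\bar U|$, keeping track of the $\beta$-dependence rather than absorbing it into $\mathfrak{C}_\beta$. From \eqref{Ito-r1} after taking expectations (Lemmas \ref{ouma}, \ref{ouma1}) one gets
\[
\frac{\beta}{1+\beta\mathfrak{C}_A}\,\mathbb{E}\int_0^T\mathcal{E}(\beta A)_s|\bar Y_{s-}|^2dA_s\le 2\,\mathbb{E}\int_0^T\mathcal{E}(\beta A)_s|\bar Y_{s-}|\,K|\bar U_s|\,dA_s .
\]
Young's inequality with weight $\gamma$ then gives
\[
\Bigl(\frac{\beta}{1+\beta\mathfrak{C}_A}-\gamma\Bigr)\|\Phi(U^1)-\Phi(U^2)\|^2\le \frac{K^2}{\gamma}\|U^1-U^2\|^2 .
\]

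\textbf{Main obstacle.} The difficulty is that, unlike the continuous case where the factor is $\beta$, here $\beta/(1+\beta\mathfrak{C}_A)\uparrow 1/\mathfrak{C}_A$ stays bounded. Enlarging $\beta$ alone therefore does not force a contraction when $K^2\mathfrak{C}_A^2$ is not small; the jumps of $A$ are exactly the issue. My first attempt is to avoid discarding the negative term $-\sum\mathcal{E}(\beta A)_s|\bar{\mathfrak f}|^2|\Delta_-A_s|^2$. Instead I would use the exact jump factor $\beta/(1+\beta\Delta_-A_s)$ pointwise on the continuous part: there it is $\beta$ and large, and on each atom $\beta\mathcal{E}_s\Delta A_s/(1+\beta\Delta A_s)$ is comparable to $\mathcal{E}_s-\mathcal{E}_{s-}$. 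If that does not close, the fallback is a time-splitting argument. Since $A$ is bounded, the interval can be cut by stopping times $0=\sigma_0<\sigma_1<\dots<\sigma_m=T$ so that the $dA$-mass on each $]\sigma_{i-1},\sigma_i[$ (excluding the big atoms) is small. The equation is solved backward piece by piece with a contraction on each piece; at the finitely many large atoms the solution is propagated explicitly by
\[
Y_{\tau-}=\mathbb{E}\bigl[Y_\tau+f(\tau,Y_{\tau-})\Delta A_\tau\mid\mathcal{F}_{\tau-}\bigr],
\]
which is a one-step fixed-point equation. That step will also need attention, since it is only solvable under a Lipschitz smallness of $K\Delta A$ or a monotonicity of $f$, and this is the point I expect to be genuinely delicate.

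Once a contraction is obtained, the Banach fixed-point theorem gives existence, and the a priori bounds of Proposition \ref{GBSDE-propo1} place $(Y,M)$ in $\mathcal{S}^2_\beta\times\mathcal{M}^2_\beta$.
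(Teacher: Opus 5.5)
\textbf{Verdict: genuine gap, and it cannot be filled.} Your proposal does not prove the theorem. The contraction is never obtained, and your fallback needs a hypothesis (smallness of $K\Delta_-A$, or monotonicity of $f$ in $y$) that is not part of \textbf{(H)}.

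Your diagnosis is nevertheless correct: the statement is false under \textbf{(H)} alone. Take $T=2$, $A_t=\mathds{1}_{\{t\geq 1\}}$, $f(t,y)=y$ (so $K=1$) and $\xi_T=1$; all of \textbf{(H)} holds.
\begin{itemize}
\item For $t<1$ the equation reads $Y_t=1+Y_{1-}-(M_2-M_t)$. Letting $t\uparrow 1$ gives $M_2-M_{1-}=1$.
\item Taking expectations of this identity for a square-integrable martingale gives $0=1$, so no solution exists.
\item With $\xi_T=0$ instead, every pair $Y=c\,\mathds{1}_{[0,1)}$, $M\equiv 0$, $c\in\mathbb{R}$, solves the equation, so uniqueness fails as well.
\end{itemize}
This is exactly the degenerate one-step equation $y=\eta+\Delta_-A_\tau f(\tau,y)$ at a predictable atom of $A$ that you singled out as the delicate point.

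\textbf{Where the paper's proof goes wrong.} The paper obtains the contraction constant $K^2(4/\beta^2+2\mathfrak{C}_A/\beta)\to 0$ only through an invalid step. In \eqref{Uranus} it bounds $\sqrt{\mathcal{E}(\beta A)_s}\,|\bar{y}_{s-}|$ by $\sup_u\sqrt{\mathcal{E}(\beta A)_u}\,|\bar{y}_u|$. That supremum only controls $\sqrt{\mathcal{E}(\beta A)_{s-}}\,|\bar{y}_{s-}|$, and $\mathcal{E}(\beta A)_s=(1+\beta\Delta_-A_s)\,\mathcal{E}(\beta A)_{s-}$; the paper itself keeps this factor in Remark~\ref{rmq2}.

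Restoring the factor turns $J_t$ into $\int_t^T q_{s-}\,dA_s$. This integral is at least $\Delta_-A_s$ whenever $A$ charges nothing on $(t,s)$ and has an atom at $s$, for every $\beta$. In the example above with $\xi_T=0$, the map $\Psi$ has a continuum of fixed points for every $\beta$. This is the same saturation $\beta/(1+\beta\mathfrak{C}_A)\uparrow 1/\mathfrak{C}_A$ you found via It\^o's formula. So the paper's $J_t$-estimate is not a way to rescue your contraction.

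\textbf{What a correct version needs.} The statement needs an additional jump condition, such as $K\,\Delta_-A_t\leq 1-\varepsilon$, or $y\mapsto f(t,y)$ nonincreasing. Under either condition $y\mapsto y-\Delta_-A_\tau f(\tau,y)$ is invertible. Your splitting scheme is then a reasonable route to a proof: contraction on stochastic intervals of small $dA$-mass, and backward resolution of the one-step equation at the finitely many large atoms.
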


\begin{proof}
%
%
	Let $\mathcal{S}^{2}_{\beta}$, endowed with its natural norm
	$$
	\|Y\|_{\mathcal{S}^{2}_{\beta}} = \left( \mathbb{E}\left[\sup_{0\leq t\leq T}\mathcal{E}(\beta A)_t |Y_{t}|^2 \right] \right)^{\frac12}.
	$$
	Clearly, the space $\mathcal{S}^{2}_{\beta}$ is complete and thus a Banach space.
	
	Now, define the map $\Psi:\mathcal S^2_\beta\to \mathcal S^2_\beta$ as follows. 
	For a given process $y^1\in\mathcal S^2_\beta$, we set $\Psi(y^1)=Y^1$, where $Y^1$ is given by the formulation \eqref{GBSDE.write.Y}. In other word, we have
	$$
	Y_t^1
	=
	\mathbb E\left[
	\xi_T+\int_{t}^T f(s,y^1_{s-})\,dA_s
	\mid \mathcal F_t
	\right],
	\qquad t\in[0,T].
	$$
	Equivalently, by Remark \ref{Otage}, we may associate with $Y^1$ a martingale $M^1$ such that the pair $(Y^1,M^1)$ solves the GBSDE
	$$
	Y_t^1
	=
	\xi_T+\int_{t}^T f(s,y^1_{s-})\,dA_s
	-\left(M_T^1-M_t^1\right),
	\qquad t\in[0,T].
	$$
	In other words, $\Psi$ maps the input process $y^1$ to the first component $Y^1$ of the solution of the GBSDE \eqref{GBSDE} with driver $f(\cdot,y^1_{\cdot})$.
	
	Note that, from assumptions \textbf{(H2)}-(ii)-(iii) and \textbf{(H3)}, we have
	\begin{equation*}
		\begin{split}
		\mathbb{E}\left[
		\int_{0}^T
		\mathcal{E}(\beta A)_s |f(s,y^1_{s-})|^2\,dA_s
		\right] &\leq
		2K^2\mathbb{E}\left[
		\int_{0}^T
		\mathcal{E}(\beta A)_s |y^1_{s-}|^2\,dA_s
		\right] +2\mathbb{E}\left[
		\int_{0}^T
		\mathcal{E}(\beta A)_s |f(s,0)|^2\,dA_s
		\right]\\
		& \leq2K^2\mathfrak{C}_A \mathbb{E}\left[\sup_{0\leq t\leq T}\mathcal{E}(\beta A)_t |{y}^1_{t}|^2 \right]+2\mathbb{E}\left[
		\int_{0}^T
		\mathcal{E}(\beta A)_s |f(s,0)|^2\,dA_s
		\right]
		<+\infty.
		\end{split}
	\end{equation*}
Then Theorem \ref{GBSDE-special_thm} allows us to claim that the mapping $\Psi$ is well-defined.

	Take another process $y^2$ in $\mathcal{S}^{2}_{\beta}$ and set $Y^2 = \Psi(y^2)$, where $Y^2$ is also given by \eqref{GBSDE.write.Y} associated with $(\xi_T,f(t,y^2_{-}),A)$. For any symbol $\mathfrak{S} \in \{Y,y\}$, define the difference $\bar{\mathfrak{S}} := \mathfrak{S}^1 - \mathfrak{S}^2$.
	
	Using the Lipschitz property of $f$, Jensen's inequality and the formulation \eqref{GBSDE.write.Y} of $Y^1$ and $Y^2$, we obtain
	\begin{equation*}
		\begin{split}
			|\bar{Y}_t|&=\left|\mathbb E\left[
			\int_{t}^T \left(f(s,y^1_{s-})-f(s,y^2_{s-})\right)\,dA_s
			\mid \mathcal F_t
			\right]\right|
			 \leq K \mathbb E\left[
			\int_{t}^T |\bar{y}_{s-}|\,dA_s
			\mid \mathcal F_t
			\right],\quad t \in [0,T].
		\end{split}
	\end{equation*}
	Therefore, for any $t \in [0,T]$, we have
	\begin{equation}\label{Uranus}
		\begin{split}
	\sqrt{\mathcal E(\beta A)_t}|\bar{Y}_t| &\leq  K \mathbb E\left[
	\int_{t}^T \sqrt{\mathcal E(\beta A)_s} |\bar{y}_{s-}| \left(\frac{\sqrt{\mathcal E(\beta A)_t}}{\sqrt{\mathcal E(\beta A)_s}}\right)\,dA_s
	\mid \mathcal F_t
	\right]\\
	& \leq K \mathbb E\left[\sup_{0\leq t\leq T} \sqrt{\mathcal E(\beta A)_t} |\bar{y}_{t}|
	\int_{t}^T  \left(\frac{\sqrt{\mathcal E(\beta A)_t}}{\sqrt{\mathcal E(\beta A)_s}}\right)\,dA_s
	\mid \mathcal F_t
	\right]\quad \text{a.s.}.
		\end{split}
\end{equation}
	Now, we set
	$$
	J_t:=\int_{t}^T\frac{\sqrt{\mathcal E(\beta A)_t}}{\sqrt{\mathcal E(\beta A)_s}}\,dA_s,\quad t \in [0,T].
	$$
	For fixed $t \in [0,T]$, we denote
	$$
	q_s:=\frac{\sqrt{\mathcal E(\beta A)_t}}{\sqrt{\mathcal E(\beta A)_s}}, \qquad s\in[t,T].
	$$
	Then $q_t=1$, $0\leq q_s\leq 1$ and $q$ is an RCLL, predictable, nonincreasing process. Moreover,
	$$
	J_t=\int_{(t,T]} q_s\,dA_s.
	$$
	Classically, we split the last integral into its continuous and purely discontinuous parts (see \cite[p. 99]{he2019semimartingale}):
	$$
	J_t=\int_{t}^T q_s\,dA_s^c+\sum_{t<s\leq T}q_s\Delta_-A_s.
	$$
	
	On the continuous part, we have
	$
	dq_s^c=-\frac{\beta}{2}q_s\,dA_s^c.
	$
	Hence
	$$
	q_s\,dA_s^c=-\frac{2}{\beta}\,dq_s^c.
	$$
	Therefore,
	$$
	\int_{t}^T q_s\,dA_s^c
	=
	\frac{2}{\beta}\int_{t}^T(-dq_s^c).
	$$
	
	For the jump part, at each jump time $s$, since
	$
	\mathcal E(\beta A)_s
	=
	\mathcal E(\beta A)_{s-}\left(1+\beta\Delta_-A_s\right).
	$
	one has
	$$
	q_s=\frac{q_{s-}}{\sqrt{1+\beta\Delta_-A_s}}.
	$$
	It follows that
	$$
	q_{s-}-q_s
	=
	q_s\left(\sqrt{1+\beta\Delta_-A_s}-1\right).
	$$
	Moreover,
	$$
	\Delta_-A_s
	=
	\frac{
		\left(\sqrt{1+\beta\Delta_-A_s}-1\right)
		\left(\sqrt{1+\beta\Delta_-A_s}+1\right)
	}{\beta}.
	$$
	Thus
	$$
	q_s\Delta_-A_s
	=
	\frac{\sqrt{1+\beta\Delta_-A_s}+1}{\beta}
	\left(q_{s-}-q_s\right).
	$$
	Since $A_T\leq \mathfrak{C}_A$ from assumption \textbf{(H3)}, we have $\Delta_-A_s\leq \mathfrak{C}_A$. Hence
	$$
	\sqrt{1+\beta\Delta_-A_s}+1
	\leq
	1+\sqrt{1+\beta \mathfrak{C}_A}.
	$$
	Therefore,
	$$
	q_s\Delta_-A_s
	\leq
	\frac{1+\sqrt{1+\beta \mathfrak{C}_A}}{\beta}
	\left(q_{s-}-q_s\right).
	$$
	Summing over \(t<s\leq T\), we obtain
	$$
	\sum_{t<s\leq T}q_s\Delta_-A_s
	\leq
	\frac{1+\sqrt{1+\beta \mathfrak{C}_A}}{\beta}
	\sum_{t<s\leq T}\left(q_{s-}-q_s\right).
	$$
	
	On the other hand, we clearly have
	$
	\frac{2}{\beta}
	\leq
	\frac{1+\sqrt{1+\beta \mathfrak{C}_A}}{\beta}.
	$
	Combining the continuous and jump parts gives
	$$
	J_t
	\leq
	\frac{1+\sqrt{1+\beta \mathfrak{C}_A}}{\beta}
	\left\{
	\int_{t}^T (-dq_s^c)
	+
	\sum_{t<s\leq T}\left(q_{s-}-q_s\right)
	\right\}.
	$$
	Since $q$ is nonincreasing, the term inside the braces is equal to the total decrease of $q$ on $(t,T]$. Hence
	$$
	\int_{t}^T (-dq_s^c)
	+
	\sum_{t<s\leq T}\left(q_{s-}-q_s\right)
	=
	q_t-q_T
	\leq 1.
	$$
	Thus
	$$
	J_t
	\leq
	\frac{1+\sqrt{1+\beta \mathfrak{C}_A}}{\beta}.
	$$
	Taking the supremum over \(t\in[0,T]\), we obtain
\begin{equation}\label{AH}
	\sup_{0\leq t\leq T}J_t
	\leq
	\frac{1+\sqrt{1+\beta \mathfrak{C}_A}}{\beta}\quad \text{a.s.}.
\end{equation}
	
	Coming back to \eqref{Uranus} and using \eqref{AH}, we have
		\begin{equation}\label{Uranus.1}
		\begin{split}
			\sup_{0\leq t\leq T} \sqrt{\mathcal E(\beta A)_t}|\bar{Y}_t| 
			& \leq K \mathbb E\left[\left( \sup_{0\leq t\leq T} \sqrt{\mathcal E(\beta A)_t} |\bar{y}_{t}|\right) 
				\left( \sup_{0\leq t\leq T}J_t\right) 
			\mid \mathcal F_t
			\right]\\
			& \leq K \frac{1+\sqrt{1+\beta \mathfrak{C}_A}}{\beta} \mathbb E\left[ \sup_{0\leq t\leq T} \sqrt{\mathcal E(\beta A)_t} |\bar{y}_{t}|
			\mid \mathcal F_t
			\right]\quad \text{a.s.}.
		\end{split}
	\end{equation}
By squaring both sides of \eqref{Uranus.1}, taking the supremum and then applying Doob's maximal inequality on its right-hand sides (see, e.g., \cite[Theorem 1.43, p. 11]{JacodShiryaev2013}), we derive
$$
\mathbb{E}\left[\sup_{0\leq t\leq T}\mathcal{E}(\beta A)_t |\bar{Y}_{t}|^2 \right] \leq K^2 \left(\frac{1+\sqrt{1+\beta \mathfrak{C}_A}}{\beta}\right)^2 \mathbb{E}\left[\sup_{0\leq t\leq T}\mathcal{E}(\beta A)_t |\bar{y}_{t}|^2 \right]
$$
Using the elementary inequality $ \left(1+\sqrt{1+\beta \mathfrak{C}_A}\right)^2 \leq 4+2\beta \mathfrak{C}_A$, we  get a simpler estimation 
$$
\mathbb{E}\left[\sup_{0\leq t\leq T}\mathcal{E}(\beta A)_t |\bar{Y}_{t}|^2 \right] \leq K^2\left(\frac{4}{\beta^2}+ \frac{2\mathfrak{C}_A}{\beta}\right) \mathbb{E}\left[\sup_{0\leq t\leq T}\mathcal{E}(\beta A)_t |\bar{y}_{t}|^2 \right]
$$
Finally, by choosing $\beta>0$ such that $K^2\left(\frac{4}{\beta^2}+ \frac{2\mathfrak{C}_A}{\beta}\right)<1$, we derive that 
$$
\|\Psi(y^1)-\Psi(y^2)\|_{\mathcal{S}^{2}_{\beta}} \leq \alpha \|y^1-y^2\|_{\mathcal{S}^{2}_{\beta}},
$$
for some $\alpha \in (0,1)$. Therefore, the map $\Psi$ is a contraction and therefore has a unique fixed point $Y$ which lies in $\mathcal{S}^2_{\beta}$ for the chosen $\beta>0$. Therefore, we have $\Psi(Y)=Y$ in the indistinguishable sense. Thus, following Remark \ref{Otage}, this fixed point provides a unique solution $(Y, M)$ to the GBSDE \eqref{GBSDE} associated with the data $(\xi_T,f,A)$. Moreover, since $A$ is bounded by assumption \textbf{(H3)}, we get
$$
\mathbb{E}\left[
\int_{0}^T
\mathcal{E}(\beta A)_s |Y_{s-}|^2\,dA_s
\right] \leq \mathfrak{C}_A \mathbb{E}\left[\sup_{0\leq t\leq T}\mathcal{E}(\beta A)_t |{Y}_{t}|^2 \right]
<+\infty.
$$

This completes the proof.
\end{proof}


\section{Generalized Reflected BSDEs}\label{sec4}

In this section, we consider the following GRBSDE associated with $(\xi,f,A)$:
\begin{equation}\label{basic equation}
	\left\lbrace 
	\begin{split}
		\text{(i) }&~	Y_t=\xi_T+\int_{t}^{T}f(s,Y_s)dA_s+(K_T-K_t)-(M_T-M_t)\\
		\text{(ii) }&~ Y_t \geq \xi_t,~\forall t \leq T,~\text{a.s.,}\\
		\text{(iii) }&~ \int_{0}^{T}\left(Y_{s-}-\xi_{s-}\right)dK^{\ast}_s+\sum_{0 \leq s <T}\left(Y_s-\xi_s\right)\Delta_{+}K_s=0~\text{ a.s.},
	\end{split}
	\right. 
\end{equation}
where $K_\cdot=K^{\ast}_\cdot+\sum_{0 \leq s <\cdot}\Delta_{+} K_s$.

We now introduce the notion of a solution to the GRBSDE
\eqref{basic equation} associated with the data \((\xi,f,A)\).

\begin{definition}\label{def-GRBSDE}
	A triplet of processes \((Y,M,K)\) is said to be a solution to the
	GRBSDE \eqref{basic equation} associated with the data
	$(\xi,f,A)$ if the following conditions hold:
	\begin{itemize}
		\item
		$
		(Y,M,K)\in
		\mathcal{S}_{\beta}^{2}
		\times
		\mathcal{M}_{\beta}^{2}
		\times
		\mathcal{K},
		$
		for some \(\beta>0\);
		
		\item
		$
		\mathbb{P}\left(
		Y_t
		=
		\xi_T
		+\int_t^T f(s,Y_{s-})\,dA_s
		+(K_T-K_t)
		-\left(M_T-M_t\right),
		~ \textnormal{ for all }t\in[0,T]
		\right)=1;
		$
		
		\item the pair \((Y,K)\) satisfies conditions
		\eqref{basic equation}-(ii)--(iii).
	\end{itemize}
\end{definition}

In this section, we study the existence and uniqueness of solutions to the
GRBSDE \eqref{basic equation} associated with the data \((\xi,f,A)\).
Our approach is divided into two main steps.

\begin{enumerate}
	\item We first consider the particular case in which the generator \(f\)
	does not depend on the \(y\)-variable. Two complementary approaches are
	developed: one based on the Snell envelope representation and another
	based on a modified penalization scheme. The analysis builds on tools from
	optimal stopping theory and the theory of reflected BSDEs with regulated
	trajectories; see, among others,
	\cite{Miryana,grigorova2020optimal,klimsiak2019reflected}.
	This first step also provides several auxiliary results that will be used
	in the proof of the main existence and uniqueness theorem.
	
	\item We then address the general case in which the generator may depend
	on the \(y\)-variable. The argument is based on a fixed-point method in a
	suitable Banach space, in the spirit of the approach developed in
	\cite[Proposition~3.2]{Djehiche}.
\end{enumerate}

\subsection{Case of a generator independent of \(y\): approximation by penalization}

We first consider the case in which the generator \(f\) is independent of the
\(y\)-variable. More precisely, we assume that there exists a process
\[
\mathfrak{f}:\Omega\times[0,T]\longrightarrow\mathbb{R}
\]
such that
\[
f(\omega,t,y)=\mathfrak{f}(\omega,t),
\qquad
(\omega,t,y)\in\Omega\times[0,T]\times\mathbb{R}.
\]

In this subsection, we establish the existence and uniqueness of a solution to
the GRBSDE associated with the data \((\xi,\mathfrak{f},A)\):
\begin{equation}\label{basic equation.indp}
	\left\{
	\begin{aligned}
		\textnormal{(i)}\quad
		&Y_t
		=
		\xi_T
		+\int_t^T \mathfrak{f}(s)\,dA_s
		+(K_T-K_t)
		-(M_T-M_t),
		&& t\in[0,T],
		\\
		\textnormal{(ii)}\quad
		&Y_t
		\geq \xi_t,
		&& \text{for all }t\in[0,T],\quad \text{a.s.},
		\\
		\textnormal{(iii)}\quad
		&\int_0^T
		\left(Y_{s-}-\xi_{s-}\right)\,dK_s^{\ast}
		+
		\sum_{0\leq s<T}
		\left(Y_s-\xi_s\right)\Delta_{+}K_s
		=0,
		&& \text{a.s.}
	\end{aligned}
	\right.
\end{equation}	
	
	
	\subsubsection{A priori estimates and uniqueness}
	
	\begin{proposition}\label{GRBSDE-propo}
		Let $(\xi_T,\mathfrak{f}^1(\cdot),A)$ and
		$(\xi_T,\mathfrak{f}^2(\cdot),A)$ be two sets of data satisfying
		Assumption \textbf{(H)}. Let $(Y^1,M^1,K^1)$ and
		$(Y^2,M^2,K^2)$ be two solutions of the reflected GRBSDE
		\eqref{basic equation.indp}, associated respectively with
		$(\xi,\mathfrak{f}^1(\cdot),A)$ and
		$(\xi,\mathfrak{f}^2(\cdot),A)$.
		
		Then, there exists a constant
		$\mathfrak{C}_{\beta,\mathfrak{C}_A}>0$ such that
		\begin{equation*}
			\begin{aligned}
				&\mathbb{E}\left[
				\sup_{0\leq t\leq T}
				\mathcal{E}(\beta A)_t
				|Y^1_t-Y^2_t|^2
				\right]
				+
				\mathbb{E}\left[
				\int_0^T
				\mathcal{E}(\beta A)_s
				|Y^1_{s-}-Y^2_{s-}|^2\,dA_s
				\right]
				+
				\mathbb{E}\left[
				\int_0^T
				\mathcal{E}(\beta A)_s\,
				d[M^1-M^2]_s
				\right]
				\\
				&\leq
				\mathfrak{C}_{\beta,\mathfrak{C}_A}
				\mathbb{E}\left[
				\int_0^T
				\mathcal{E}(\beta A)_s
				|\mathfrak{f}^1(s)-\mathfrak{f}^2(s)|^2\,dA_s
				\right].
			\end{aligned}
		\end{equation*}
	\end{proposition}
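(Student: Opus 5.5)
The plan is to follow the proof of Proposition~\ref{GBSDE-propo1} (and Proposition~\ref{GBSDE-propo2}) for the differences, with one new ingredient: a change-of-variables formula for optional semimartingales with regulated trajectories. Set $\bar Y:=Y^1-Y^2$, $\bar M:=M^1-M^2$, $\bar K:=K^1-K^2$ and $\bar{\mathfrak f}:=\mathfrak f^1-\mathfrak f^2$. Since the obstacle and the terminal value coincide, we have $\bar Y_T=0$ and
$$\bar Y_t=\int_t^T\bar{\mathfrak f}(s)\,dA_s+(\bar K_T-\bar K_t)-(\bar M_T-\bar M_t).$$
Here $\bar Y$ is only regulated, not RCLL, so instead of the classical It\^o formula I would apply the Gal'chuk--Lenglart formula (It\^o's formula for optional semimartingales, as used in \cite{Miryana}) to $\mathcal{E}(\beta A)_t|\bar Y_t|^2$. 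Relative to \eqref{Ito1}, the right-continuous part produces exactly the same terms, now with the additional Stieltjes term $2\mathcal{E}(\beta A)_s\bar Y_{s-}\,d\bar K^{\ast}_s$. The right jumps of $\bar Y$, namely $\Delta_+\bar Y_s=-\Delta_+\bar K_s$, produce the sum
$$\sum_{t\le s<T}\mathcal{E}(\beta A)_s\bigl(|\bar Y_{s+}|^2-|\bar Y_s|^2\bigr)=\sum_{t\le s<T}\mathcal{E}(\beta A)_s\bigl(-2\bar Y_s\Delta_+\bar K_s+(\Delta_+\bar K_s)^2\bigr).$$
Because $A$, and hence $\mathcal{E}(\beta A)$, is right-continuous, no right-jump term of the weight appears.

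The key step is to show that all reflection contributions carry the favourable sign. The Skorokhod condition \eqref{basic equation.indp}-(iii) together with $Y^i\ge\xi$ gives
$$\bar Y_{s-}\,dK^{1,\ast}_s=(Y^1_{s-}-\xi_{s-})\,dK^{1,\ast}_s-(Y^2_{s-}-\xi_{s-})\,dK^{1,\ast}_s=-(Y^2_{s-}-\xi_{s-})\,dK^{1,\ast}_s\le0,$$
and symmetrically $-\bar Y_{s-}\,dK^{2,\ast}_s\le0$, so that $\bar Y_{s-}\,d\bar K^{\ast}_s\le0$. In the same way $\bar Y_s\Delta_+K^1_s=-(Y^2_s-\xi_s)\Delta_+K^1_s\le0$ and $-\bar Y_s\Delta_+K^2_s\le0$, so $\bar Y_s\Delta_+\bar K_s\le0$. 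Writing the identity backward from $T$ to $t$, the reflection terms enter the right-hand side as $2\int_t^T\mathcal{E}(\beta A)_s\bar Y_{s-}\,d\bar K^{\ast}_s\le 0$ and $2\sum\mathcal{E}(\beta A)_s\bar Y_s\Delta_+\bar K_s\le0$. The squared right jumps $(\Delta_+\bar K_s)^2$ appear on the left-hand side with a positive sign and can simply be discarded. I expect this bookkeeping to be the main obstacle: the exact form of the Gal'chuk--Lenglart formula, i.e.\ that the left-continuous part $\bar K^g$ contributes only through right jumps evaluated at $\bar Y_s$, must be used carefully. One also has to check that jumps of $\bar K^{\ast}$ at predictable times are handled with $\bar Y_{s-}$, matching the form of (iii), and not absorbed into $(\Delta_-\bar Y)^2$. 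The term $\Delta_-\bar K^{\ast}$ enters $\Delta_-\bar Y$, and its square again has a good sign. The cross terms $\Delta_-\bar K^{\ast}\Delta_-\bar M$ and $\Delta_-\bar K^{\ast}\,\Delta_-A$ need attention; I would try to avoid them altogether by grouping $\mathcal{E}(\beta A)_s(\Delta_-\bar Y_s)^2\ge0$ and keeping $d[\bar M]$ only through $[\bar Y]^c=[\bar M]^c$ together with the identity $\Delta_-\bar M=\Delta_-\bar Y+\bar{\mathfrak f}\Delta_-A-\Delta_-\bar K^{\ast}$, exactly as in \eqref{jump of Y}--\eqref{Plug1}. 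The remaining cross terms are then treated by the predictability argument of Lemma~\ref{ouma1}.

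Once the reflection terms are dropped, we arrive at the analogue of \eqref{Ito-r3} with $\xi_T$ replaced by $0$ and $\mathfrak f$ by $\bar{\mathfrak f}$. Using the Young inequality \eqref{Ito-r2}, the lower bound \eqref{essential condition-r1} for the $dA$-term, Lemma~\ref{ouma} (the stochastic integral $\int\mathcal{E}(\beta A)_s\bar Y_{s-}\,d\bar M_s$ is a true martingale), and Lemma~\ref{ouma1}, taking expectations yields the bounds on $\mathbb{E}\int_0^T\mathcal{E}(\beta A)_s|\bar Y_{s-}|^2\,dA_s$ and $\mathbb{E}\int_0^T\mathcal{E}(\beta A)_s\,d[\bar M]_s$ by $\mathfrak{C}_{\beta,\mathfrak{C}_A}\,\mathbb{E}\int_0^T\mathcal{E}(\beta A)_s|\bar{\mathfrak f}(s)|^2\,dA_s$. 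Finally, I would return to the pathwise inequality, take the supremum in $t$, and apply \eqref{BDG-res} with $\varepsilon=\tfrac12$. The jump cross-sum is controlled via the Kunita--Watanabe inequality as in \eqref{driver_part_Tonelli}. This gives the $\sup$-estimate; the supremum over regulated paths is handled as in Remark~\ref{rmq2}.
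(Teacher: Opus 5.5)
Your proposal is correct and follows essentially the paper's proof: apply the regulated It\^o formula (Corollary~\ref{Application of Ito formula}) to $\mathcal{E}(\beta A)|\bar Y|^2$, use the Skorokhod conditions together with $Y^i\geq\xi$ to get $\int_t^T\mathcal{E}(\beta A)_s\bar Y_{s-}\,d\bar K^{\ast}_s\leq0$ and $\sum_{t\leq s<T}\mathcal{E}(\beta A)_s\bar Y_s\Delta_+\bar K_s\leq0$, discard these terms, and conclude with the Young/BDG estimates of Proposition~\ref{GBSDE-propo1}. Your extra care with $\Delta_-\bar K^{\ast}$ inside $(\Delta_-\bar Y)^2$ is an improvement, since the paper's identity silently treats $\Delta_-\bar Y$ as $\Delta_-\bar M-\bar{\mathfrak f}\Delta_-A$; two corrections to that step: the identity should read $\Delta_-\bar M=\Delta_-\bar Y+\bar{\mathfrak f}\Delta_-A+\Delta_-\bar K^{\ast}$, and the vanishing of $\mathbb{E}\sum_s\mathcal{E}(\beta A)_s\Delta_-\bar M_s\bigl(\bar{\mathfrak f}(s)\Delta_-A_s+\Delta_-\bar K^{\ast}_s\bigr)$ requires $K^{1,\ast},K^{2,\ast}$ to be predictable (as in the Mertens decomposition), which you should state as a hypothesis because it is genuinely needed: with merely optional $K$, the martingale component is not unique.
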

	
	\begin{proof}
		Set
		\[
		\bar{Y}:=Y^1-Y^2,\qquad
		\bar{M}:=M^1-M^2,\qquad
		\bar{K}:=K^1-K^2,
		\]
		and
		\[
		\bar{\mathfrak{f}}(s)
		:=\mathfrak{f}^1(s)-\mathfrak{f}^2(s).
		\]
		Since the two equations have the same terminal condition, we have
		$\bar{Y}_T=0$.
		
		Applying Corollary \ref{Application of Ito formula} to
		$\mathcal{E}(\beta A)|\bar{Y}|^2$, we obtain
		\begin{equation*}
			\begin{aligned}
				0={}&
				\mathcal{E}(\beta A)_t|\bar{Y}_t|^2
				+
				\int_t^T
				\frac{\beta}{1+\beta\Delta_-A_s}
				\mathcal{E}(\beta A)_s
				|\bar{Y}_{s-}|^2\,dA_s
				\\
				&-
				2\int_t^T
				\mathcal{E}(\beta A)_s
				\bar{Y}_{s-}\bar{\mathfrak{f}}(s)\,dA_s
				-
				2\int_t^T
				\mathcal{E}(\beta A)_s
				\bar{Y}_{s-}\,d\bar{K}^{\ast}_s
				\\
				&+
				2\int_t^T
				\mathcal{E}(\beta A)_s
				\bar{Y}_{s-}\,d\bar{M}_s
				+
				\int_t^T
				\mathcal{E}(\beta A)_s\,d[\bar{M}]_s
				\\
				&+
				\sum_{t<s\leq T}
				\mathcal{E}(\beta A)_s
				|\bar{\mathfrak{f}}(s)|^2
				|\Delta_-A_s|^2
				-
				2\int_t^T
				\mathcal{E}(\beta A)_s
				\bar{\mathfrak{f}}(s)
				\Delta_-A_s\,d\bar{M}_s
				\\
				&+
				\sum_{t\leq s<T}
				\mathcal{E}(\beta A)_s
				\left(
				|\Delta_+\bar{K}_s|^2
				-2\bar{Y}_s\Delta_+\bar{K}_s
				\right).
			\end{aligned}
		\end{equation*}
		
		We now show that the terms involving the reflecting processes have
		the appropriate sign. By the minimality condition
		\eqref{basic equation.indp}--\textnormal{(iii)}, we have
		\begin{equation}
			\begin{aligned}
				&\int_t^T
				\mathcal{E}(\beta A)_s
				\bar{Y}_{s-}\,d\bar{K}^{\ast}_s
				\\
				={}&
				\int_t^T
				\mathcal{E}(\beta A)_s
				(Y^1_{s-}-\xi_{s-})
				\left(dK_s^{1,c}+\Delta K_s^{1,d}\right)
			-
				\int_t^T
				\mathcal{E}(\beta A)_s
				(Y^2_{s-}-\xi_{s-})
				\left(dK_s^{1,c}+\Delta K_s^{1,d}\right)
				\\
				&-
				\int_t^T
				\mathcal{E}(\beta A)_s
				(Y^1_{s-}-\xi_{s-})
				\left(dK_s^{2,c}+\Delta K_s^{2,d}\right)
				+
				\int_t^T
				\mathcal{E}(\beta A)_s
				(Y^2_{s-}-\xi_{s-})
				\left(dK_s^{2,c}+\Delta K_s^{2,d}\right)
				\\
				={}&
				-
				\int_t^T
				\mathcal{E}(\beta A)_s
				(Y^2_{s-}-\xi_{s-})
				\left(dK_s^{1,c}+\Delta K_s^{1,d}\right)
				-
				\int_t^T
				\mathcal{E}(\beta A)_s
				(Y^1_{s-}-\xi_{s-})
				\left(dK_s^{2,c}+\Delta K_s^{2,d}\right)
				\\
				\leq{}&0.
			\end{aligned}
			\label{reflection-left-sign}
		\end{equation}
		Indeed, the first and fourth terms vanish by the minimality
		conditions, whereas the remaining terms are nonpositive since
		$Y^i_{s-}\geq\xi_{s-}$ and $K^i$ is nondecreasing.
		
		Similarly, for the right jumps, we have
		\begin{equation}
			\begin{aligned}
				&\sum_{t\leq s<T}
				\mathcal{E}(\beta A)_s
				\bar{Y}_s\Delta_+\bar{K}_s
				\\
				={}&
				\sum_{t\leq s<T}
				\mathcal{E}(\beta A)_s
				(Y^1_s-\xi_s)\Delta_+K^1_s
				-
				\sum_{t\leq s<T}
				\mathcal{E}(\beta A)_s
				(Y^2_s-\xi_s)\Delta_+K^1_s
				\\
				&-
				\sum_{t\leq s<T}
				\mathcal{E}(\beta A)_s
				(Y^1_s-\xi_s)\Delta_+K^2_s
				+
				\sum_{t\leq s<T}
				\mathcal{E}(\beta A)_s
				(Y^2_s-\xi_s)\Delta_+K^2_s
				\\
				={}&
				-
				\sum_{t\leq s<T}
				\mathcal{E}(\beta A)_s
				(Y^2_s-\xi_s)\Delta_+K^1_s
			-
				\sum_{t\leq s<T}
				\mathcal{E}(\beta A)_s
				(Y^1_s-\xi_s)\Delta_+K^2_s
				\\
				\leq{}&0.
			\end{aligned}
			\label{reflection-right-sign}
		\end{equation}
		Here again, the first and fourth terms vanish by the minimality
		conditions, while the remaining terms are nonpositive because
		$Y^i_s\geq\xi_s$ and $\Delta_+K^i_s\geq0$.
		
		Consequently,
		\[
		-2\int_t^T
		\mathcal{E}(\beta A)_s
		\bar{Y}_{s-}\,d\bar{K}^{\ast}_s
		\geq0,
		\]
		and
		\[
		\sum_{t\leq s<T}
		\mathcal{E}(\beta A)_s
		\left(
		|\Delta_+\bar{K}_s|^2
		-2\bar{Y}_s\Delta_+\bar{K}_s
		\right)
		\geq0.
		\]
		Therefore, these terms can be discarded when deriving an upper
		estimate.
		
		The remainder of the proof follows by applying Young's inequality,
		the Burkholder--Davis--Gundy inequality, and the same estimates as
		in Proposition \ref{GBSDE-propo1}. Hence,
		\begin{align*}
			&\mathbb{E}\left[
			\sup_{0\leq t\leq T}
			\mathcal{E}(\beta A)_t|\bar{Y}_t|^2
			\right]
			+
			\mathbb{E}\left[
			\int_0^T
			\mathcal{E}(\beta A)_s
			|\bar{Y}_{s-}|^2\,dA_s
			\right]
			+
			\mathbb{E}\left[
			\int_0^T
			\mathcal{E}(\beta A)_s\,d[\bar{M}]_s
			\right]
			\\
			&\leq
			\mathfrak{C}_{\beta,\mathfrak{C}_A}
			\mathbb{E}\left[
			\int_0^T
			\mathcal{E}(\beta A)_s
			|\bar{\mathfrak{f}}(s)|^2\,dA_s
			\right].
		\end{align*}
		This completes the proof.
	\end{proof}

	As a direct consequence of Proposition \ref{GRBSDE-propo}, we obtain the
	following uniqueness result.
	
	\begin{corollary}\label{uniq.coro.indp.y}
		Let $\xi$ be an irregular lower obstacle, and let
		$\mathfrak{f}(\cdot)$ be a generator independent of the state variable
		$y$. Assume that the data $(\xi,\mathfrak{f}(\cdot),A)$ satisfy
		Assumption \textbf{(H)}. If the GRBSDE
		\eqref{basic equation.indp} admits a solution $(Y,M,K)$, then this
		solution is unique.
	\end{corollary}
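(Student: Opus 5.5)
The plan is to deduce uniqueness directly from the a priori estimate of Proposition \ref{GRBSDE-propo}, applied with identical data. Let $(Y^1,M^1,K^1)$ and $(Y^2,M^2,K^2)$ be two solutions of \eqref{basic equation.indp} associated with the same data $(\xi,\mathfrak{f}(\cdot),A)$. We invoke Proposition \ref{GRBSDE-propo} with $\mathfrak{f}^1=\mathfrak{f}^2=\mathfrak{f}$. The right-hand side of the estimate then vanishes, so
\[
\mathbb{E}\left[\sup_{0\leq t\leq T}\mathcal{E}(\beta A)_t|Y^1_t-Y^2_t|^2\right]+\mathbb{E}\left[\int_0^T\mathcal{E}(\beta A)_s\,d[M^1-M^2]_s\right]=0 .
\]
There is one subtlety: the two solutions may be taken in $\mathcal{S}^2_{\beta_1}$ and $\mathcal{S}^2_{\beta_2}$ for possibly different $\beta$'s. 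By Remark \ref{rmq2}-2, under \textbf{(H3)} all these weighted norms are equivalent to the unweighted ones, since $1\leq\mathcal{E}(\beta A)\leq e^{\beta\mathfrak{C}_A}$. Hence both solutions lie in the same spaces for a common $\beta$, and the proposition applies.

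From $\mathcal{E}(\beta A)\geq 1$ we get $\sup_t|Y^1_t-Y^2_t|=0$ a.s., so $Y^1$ and $Y^2$ are indistinguishable. Because the supremum is over all $t$, this holds even though the processes are only optional and regulated. Similarly, $\mathbb{E}[[M^1-M^2]_T]=0$. Since $M^1-M^2$ is a square-integrable martingale null at $0$, we have $\mathbb{E}[\sup_t|M^1_t-M^2_t|^2]\leq 4\mathbb{E}[[M^1-M^2]_T]=0$ by Doob's inequality, so $M^1=M^2$ up to indistinguishability.

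Finally, we identify the reflecting processes from equation \eqref{basic equation.indp}-(i):
\[
K^i_t=Y^i_0-Y^i_t-\int_0^t\mathfrak{f}(s)\,dA_s+M^i_t,\qquad t\in[0,T].
\]
This representation follows by writing (i) at $0$ and at $t$ and subtracting, using $K^i_0=0$ as the normalization implicit in $\mathcal{K}$. The right-hand side is the same for $i=1,2$, so $K^1=K^2$ a.s. for all $t$.

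The only delicate point is the first step. One must check that Proposition \ref{GRBSDE-propo} genuinely applies to two arbitrary solutions in the sense of Definition \ref{def-GRBSDE}. In particular, the proposition relies on an It\^o-type formula for $\mathcal{E}(\beta A)|\bar Y|^2$ for regulated (not c\`adl\`ag) $\bar Y$, and on the sign argument for the reflection terms. Both only use the Skorokhod conditions and the constraint $Y^i\geq\xi$, which each solution satisfies. The integrability required there follows from $(Y^i,M^i,K^i)\in\mathcal{S}^2_\beta\times\mathcal{M}^2_\beta\times\mathcal{K}^2$ together with \textbf{(H3)}. Once these points are granted, the remaining steps are routine.
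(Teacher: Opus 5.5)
Your proposal is correct and follows the paper's own argument: apply Proposition \ref{GRBSDE-propo} with $\mathfrak{f}^1=\mathfrak{f}^2=\mathfrak{f}$, so that $Y^1=Y^2$ and $M^1=M^2$, and then read off $K^1=K^2$ from equation (i). Your additional remarks only make explicit details the paper leaves implicit: reducing to a common $\beta$ via $1\leq\mathcal{E}(\beta A)\leq e^{\beta\mathfrak{C}_A}$, using Doob's inequality for $M^1-M^2$, and the normalization $K_0=0$.
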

	
	\begin{proof}
		Let $(Y^1,M^1,K^1)$ and $(Y^2,M^2,K^2)$ be two solutions of
		\eqref{basic equation.indp} associated with the same data
		$(\xi,\mathfrak{f}(\cdot),A)$. Applying Proposition
		\ref{GRBSDE-propo} with
		\[
		\mathfrak{f}^1(\cdot)
		=
		\mathfrak{f}^2(\cdot)
		=
		\mathfrak{f}(\cdot),
		\]
		we obtain
		\[
		\mathbb{E}\left[
		\sup_{0\leq t\leq T}
		\mathcal{E}(\beta A)_t
		|Y^1_t-Y^2_t|^2
		\right]
		+
		\mathbb{E}\left[
		\int_0^T
		\mathcal{E}(\beta A)_s
		\,d[M^1-M^2]_s
		\right]
		=0.
		\]
		Since $\mathcal{E}(\beta A)>0$, it follows that
		\[
		Y^1=Y^2
		\quad\text{and}\quad
		M^1=M^2
		\]
		up to indistinguishability. Finally, the dynamics of the GRBSDE \eqref{basic equation.indp}-(i) imply
		that $K^1=K^2$. Hence, the solution $(Y,M,K)$ is unique.
	\end{proof}

\subsubsection{Characterization via the Snell envelope}

We now provide a characterization of the first component \(Y\) of the solution to the reflected GBSDE \eqref{basic equation.indp} in terms of an appropriate Snell envelope.

\begin{proposition}\label{Snell.Envlp}
	Under the assumptions of Proposition \ref{GRBSDE-propo}, the GRBSDE \eqref{basic equation.indp} has a unique solution $(Y,M,K)$. Moreover, its first component is characterized by
	$$
	Y_t=\esssup_{\tau \in \mathcal{T}_{[t,T]}} \mathbb{E}\left[\xi_\tau+\int_{t}^{\tau} \mathfrak{f}(s)dA_s \mid  \mathcal{F}_t\right],\qquad t \in [0,T].
	$$
\end{proposition}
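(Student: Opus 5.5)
Uniqueness is already given by Corollary \ref{uniq.coro.indp.y}, so the work is existence together with the representation. The plan is to reduce to a reflected problem without driver and then invoke the Mertens decomposition of strong supermartingales of class (D).

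\textbf{Step 1: reduction.} Set
\[
F_t:=\int_0^t\mathfrak{f}(s)\,dA_s,
\]
which is an RCLL, predictable, finite-variation process. By (H2)-(iii), (H3) and Cauchy--Schwarz together with $\mathcal{E}(\beta A)\geq 1$, we have $\mathbb{E}[\sup_t|F_t|^2]\leq \mathfrak{C}_A\,\mathbb{E}[\int_0^T|\mathfrak{f}(s)|^2dA_s]<+\infty$. Define the optional regulated process $\widetilde{\xi}:=\xi+F$. By (H1) and Remark \ref{rmq2}, $\mathbb{E}[\esssup_\tau|\widetilde\xi_\tau|^2]<\infty$, so $\widetilde\xi$ is of class (D). Let $\widetilde Y$ be its Snell envelope (Definition \ref{o22}); by El Karoui's results it exists, is optional and is a strong supermartingale, and
\[
\widetilde Y_t=\esssup_{\tau\in\mathcal{T}_{[t,T]}}\mathbb{E}[\widetilde\xi_\tau\mid\mathcal{F}_t].
\]
Put $Y:=\widetilde Y-F$. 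This immediately gives the claimed formula, since $F_t$ is $\mathcal{F}_t$-measurable, and it also gives $Y\geq\xi$ and $Y_T=\xi_T$.

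\textbf{Step 2: decomposition and integrability.} Apply the Mertens decomposition: $\widetilde Y=\widetilde Y_0+\widetilde M-K^{\ast}-K^{g}$, where $\widetilde M$ is a martingale, $K^{\ast}$ is predictable, RCLL and nondecreasing, and $K^g$ is adapted, left-continuous, purely discontinuous and nondecreasing. Set $K:=K^\ast+K^g$ and $M:=\widetilde M$; then (i) follows by rewriting.

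For square integrability, Doob's inequality applied to $\mathbb{E}[\esssup|\widetilde\xi|\mid\mathcal{F}_t]$ yields $\widetilde Y\in\mathcal{S}^2$, and hence $Y\in\mathcal{S}^2_\beta$ by the norm equivalence in Remark \ref{rmq2}. The bound $\mathbb{E}[K_T^2]<\infty$ follows from the standard estimate for Mertens decompositions of supermartingales dominated by an $L^2$ maximal function (as in \cite{Miryana}, \cite{grigorova2020optimal}). It then follows that $M$ is square integrable, and by boundedness of $\mathcal{E}(\beta A)$ we get $M\in\mathcal{M}^2_\beta$.

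\textbf{Step 3: minimality (the main obstacle).} For the right jumps, we use the classical identity $\Delta_+K_s=\widetilde Y_s-\widetilde Y_{s+}$, with $\widetilde Y_s=\max(\widetilde\xi_s,\widetilde Y_{s+})$. Since $F$ is right-continuous, $\Delta_+K_s>0$ forces $Y_s=\xi_s$, which gives the second part of (iii).

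For the predictable part $K^\ast$, we need $\int(\widetilde Y_{s-}-\overline{\widetilde\xi}_{s-})\,dK^\ast_s=0$, which is known for Snell envelopes of regulated obstacles (the left-limit characterization, e.g.\ \cite{klimsiak2019reflected,Miryana}). This is the step I expect to require the most care. Because $F$ is predictable and has left jumps at the jump times of $A$, one must check that
\[
\widetilde Y_{s-}-\widetilde\xi_{s-}=Y_{s-}-\xi_{s-},
\]
which holds since $F_{s-}$ cancels on both sides. Equally, one must check that $\Delta_-K^\ast$ at predictable times is characterized by $\Delta_-K^\ast_\tau=(\widetilde Y_{\tau-}-\mathbb{E}[\widetilde Y_\tau\mid\mathcal{F}_{\tau-}])$, which is supported on $\{\widetilde Y_{\tau-}=\widetilde\xi_{\tau-}\}$. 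I would verify this via the standard argument: on $\{\widetilde Y_{\tau-}>\widetilde\xi_{\tau-}\}$, the envelope is a martingale locally before $\tau$, and this holds in the general, non-quasi-left-continuous filtration. Combining both parts gives (iii), and therefore existence.
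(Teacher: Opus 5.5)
Your proposal is correct and follows essentially the same route as the paper: form the Snell envelope of the regulated reward $\xi+\int_0^{\cdot}\mathfrak{f}(s)\,dA_s$, apply the Mertens decomposition with $K=K^{\ast}+K^{g}$, and read off the Skorokhod condition from the known minimality properties of Snell envelopes of regulated obstacles; the paper cites Lemmas 3.3 and 3.26 of Grigorova et al.\ for this, whereas you spell out the identity $\widetilde Y_s=\widetilde\xi_s\vee\widetilde Y_{s+}$ and the cancellation of $F_{s-}$. Uniqueness then comes from Corollary~\ref{uniq.coro.indp.y} in both. One small caveat: your justification of the left-jump property (``martingale locally before $\tau$'') is only a sketch, since it needs the martingale property up to and including $\tau$, but this is the standard fact the paper also takes from the literature.
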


\begin{proof}
	Let us set
	$$
	Y_t:=\esssup_{\tau \in \mathcal{T}_{[t,T]}} \mathbb{E}\left[\xi_\tau+\int_{t}^{\tau} \mathfrak{f}(s)dA_s \mid  \mathcal{F}_t\right],\qquad t \in [0,T].
	$$
	Then
	\begin{equation}\label{def1}
		\begin{split}
			S_t:=Y_t+\int_{0}^{t}\mathfrak{f}(s)dA_s
			=\esssup_{\tau \in \mathcal{T}_{[t,T]}} \mathbb{E}\left[\widehat{\xi}_\tau \mid  \mathcal{F}_t\right],
		\end{split}
	\end{equation}
	where
	$$
	\widehat{\xi}_t:=\xi_t+\int_{0}^{t} \mathfrak{f}(s)dA_s
	$$
	is the reward process, or the payoff process. In other words, the process $(S_t)_{t \leq T}$ is the Snell envelope of the process $(\widehat{\xi}_t)_{t \leq T}$. Note that $\widehat{\xi}$ is an optional process with regulated trajectories, since $\xi$ is optional with regulated trajectories and $\int_{0}^{\cdot} \mathfrak{f}(s)dA_s$ is RCLL and predictable. Moreover, from \eqref{bounded_exp_A} and assumptions \textbf{(H1)} and \textbf{(H2)}, we have
	$$
	\mathbb{E}\left[\esssup_{\tau \in \mathcal{T}_{[0,T]}}| \mathcal{E}(\beta A)_\tau  \widehat{\xi}_\tau|^2\right] \leq \mathfrak{C}_{\beta,\mathfrak{C}_A}\left( \mathbb{E}\left[  \esssup_{\tau \in \mathcal{T}_{[0,T]}} \left| \mathcal{E}(\beta A)_{\tau} \xi_{\tau} \right|^2\right] +  \mathbb{E}\left[  \int_{0}^{T} \mathcal{E}(\beta A)_{s} \left| \mathfrak{f}(s) \right|^2 dA_s\right] 
	\right) <+\infty .
	$$
	In particular, the process $\widehat{\xi}$ belongs to $\mathcal{S}^2$. By applying the Mertens decomposition to the process $S$; see Lemma 3.2 in \cite{grigorova2020optimal} (see also \cite[Appendix 1, Thm.20, equalities (20.2)]{DellacherieMeyer1980} or \cite[Theorem A.1]{Miryana}), there exist a unique nondecreasing right-continuous predictable process $(\widehat{K}^{\ast}_t)_{t \leq T}$, a unique nondecreasing right-continuous adapted purely discontinuous process $(\widehat{C}_t)_{t \leq T}$, and a unique martingale $(M_t)_{t \leq T}$ such that $\widehat{K}^{\ast}_0=\widehat{C}_{0-}=0$,
	\begin{equation}\label{def2}
		S_t=S_0+M_t-\widehat{K}^{\ast}_t-\widehat{C}_{t-},\qquad \text{for all } t \in [0,T] \text{ a.s.,}
	\end{equation}
	and
	$$
	\mathbb{E}\left[|\widehat{K}^{\ast}_T|^2+|\widehat{C}_T|^2+[M]_T\right]<+\infty.
	$$
	
	Let us set
	$$
	K_t:=\widehat{K}^{\ast}_t+\widehat{C}_{t-}.
	$$
	Then $K$ is a predictable increasing irregular process such that $K^\ast=\widehat{K}^{\ast}$ and $\Delta_+K_t=\Delta \widehat{C}_t$; see Definition \ref{regulated process}. In other words, $K$ is the increasing process obtained from the Mertens decomposition of $S$.
	
	From Lemma 3.26-(iii) and Lemma 3.3 in \cite{grigorova2020optimal}, we have
	$$
	\int_{0}^{T}\left(\xi_{s-}-Y_{s-}\right)dK^{\ast}_s
	=
	\int_{0}^{T}\left(\xi_s-Y_s\right)d\widehat{K}^{\ast,c}_s
	+
	\sum_{0 < s \leq T}\left(\xi_{s-}-Y_{s-}\right) \Delta_-\widehat{K}^\ast_s
	=0.
	$$
	Similarly, using Lemma 3.26-(ii) in \cite{grigorova2020optimal}, we get
	$$
	\sum_{0 \leq s <T}\left(\xi_s-Y_s\right)\Delta_{+}K_s
	=
	\sum_{0 \leq s <T}\left(\xi_s-Y_s\right)\Delta \widehat{C}_s
	=0.
	$$
	Finally, using \eqref{def1} and \eqref{def2}, we derive
	$$
	Y_t=Y_0-\int_{0}^{t} \mathfrak{f}(s)dA_s-K_t+M_t,\qquad t \in [0,T].
	$$
	Since $Y_T=\xi_T$, it follows that
	$$
	Y_t=\xi_T+\int_{t}^{T} \mathfrak{f}(s)dA_s+(K_T-K_t)-(M_T-M_t),\qquad t \in [0,T].
	$$
	In other words, $(Y,M,K)$ is a solution of the GRBSDE associated with $(\xi,\mathfrak{f}(\cdot),A)$. By the uniqueness result given in Corollary \ref{uniq.coro.indp.y}, this constructed solution is unique. 
	
	From the construction of the state process $Y$ and assumptions \textbf{(H1)}--\textbf{(H3)}, we derive that $Y \in \mathcal{S}^2_\beta$. Moreover, the boundedness property \eqref{bounded_exp_A} allows us to conclude that $M \in \mathcal{M}^2_\beta$. This completes the proof.
\end{proof}

\begin{proposition}\label{Snell.Envlp.general}
	Let $(Y,M,K)$ be a solution to the GRBSDE \eqref{basic equation} associated with $(\xi,{f},A)$ 
	Then, for every $t\in[0,T]$,
	$$
	Y_t=\esssup_{\tau \in \mathcal{T}_{[t,T]}}
	\mathbb{E}\left[
	\xi_\tau+\int_{t}^{\tau}\mathfrak{f}(s,Y_{s-})dA_s
	\mid \mathcal{F}_t
	\right].
	$$
\end{proposition}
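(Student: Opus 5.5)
The plan is to reduce the statement to Proposition \ref{Snell.Envlp} by freezing the state variable in the generator. Given a solution $(Y,M,K)$ of \eqref{basic equation}, set $\mathfrak{f}(s):=f(s,Y_{s-})$. By Remark \ref{rmq1} this process is predictable. Moreover, by \textbf{(H2)}-(ii)--(iii), \textbf{(H3)} and $Y\in\mathcal{S}^2_\beta$ (together with Remark \ref{rmq2}), it satisfies
\[
\mathbb{E}\Big[\int_0^T\mathcal{E}(\beta A)_s|\mathfrak{f}(s)|^2\,dA_s\Big]\leq 2K^2\mathfrak{C}_A(1+\beta\mathfrak{C}_A)\|Y\|^2_{\mathcal{S}^2_\beta}+2\,\mathbb{E}\Big[\int_0^T\mathcal{E}(\beta A)_s|f(s,0)|^2\,dA_s\Big]<+\infty,
\]
exactly as in the proof of Theorem \ref{GBSDE-general_thm}. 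Hence the data $(\xi,\mathfrak{f},A)$ satisfy \textbf{(H)} with a $y$-independent generator.

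With this choice, the triplet $(Y,M,K)$ solves \eqref{basic equation.indp} associated with $(\xi,\mathfrak{f},A)$. The dynamics (i) hold by definition of $\mathfrak{f}$, and the constraints (ii)--(iii) are unchanged since they do not involve $f$. The integrability requirements come from Definition \ref{def-GRBSDE}.

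Proposition \ref{Snell.Envlp} then states that \eqref{basic equation.indp} has a unique solution, whose first component is the essential supremum $\esssup_{\tau}\mathbb{E}[\xi_\tau+\int_t^\tau\mathfrak{f}(s)dA_s\mid\mathcal{F}_t]$. By the uniqueness in Corollary \ref{uniq.coro.indp.y}, the given $Y$ coincides up to indistinguishability with that first component. Substituting $\mathfrak{f}(s)=f(s,Y_{s-})$ gives the claimed representation for every $t\in[0,T]$.

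The only delicate point is the uniqueness class. Corollary \ref{uniq.coro.indp.y} and Proposition \ref{GRBSDE-propo} require both solutions to lie in $\mathcal{S}^2_\beta\times\mathcal{M}^2_\beta\times\mathcal{K}$ for the same $\beta$. The given solution lies there for some $\beta$, while the Snell solution is constructed for any $\beta$, since Remark \ref{rmq2} shows that the $\beta$-norms are equivalent under \textbf{(H3)}. So both can be compared at the same $\beta$.

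As a safeguard, one can also verify the identity directly. Taking conditional expectations in (i) between $t$ and $\tau$ gives $Y_t\geq\mathbb{E}[\xi_\tau+\int_t^\tau\mathfrak{f}\,dA\mid\mathcal{F}_t]$, using $K$ nondecreasing and $Y_\tau\geq\xi_\tau$. For the reverse inequality, one would use the $\varepsilon$-optimal times $\tau^\varepsilon_t:=\inf\{s\geq t: Y_s\leq\xi_s+\varepsilon\}$ together with the Skorokhod condition (iii). This is the step where the right jumps $\Delta_+K$ make a direct argument technical, which is why the reduction to Proposition \ref{Snell.Envlp} is preferred.
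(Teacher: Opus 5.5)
Your proposal is correct and follows the paper's own proof. You freeze the generator as $f(s,Y_{s-})$, observe that $(Y,M,K)$ then solves the $y$-independent GRBSDE \eqref{basic equation.indp}, and conclude via Proposition~\ref{Snell.Envlp} together with the uniqueness in Corollary~\ref{uniq.coro.indp.y}. You also supply two details the paper leaves implicit: that the frozen generator satisfies \textbf{(H2)}-(iii), and that under \textbf{(H3)} all the $\beta$-weighted spaces coincide, so the two solutions can be compared in the same uniqueness class.
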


\begin{proof}
	Let $(Y,M,K)$ be a solution to the GRBSDE associated with $(\xi,\mathfrak{f},A)$.
	Set
	$$
	\mathfrak{f}^Y(s):=\mathfrak{f}(s,Y_{s-}), \qquad s\in[0,T].
	$$
	Then $(Y,M,K)$ is also a solution to the GRBSDE \eqref{basic equation.indp} associated with
	$(\xi,\mathfrak{f}^Y(\cdot),A)$, where the generator $\mathfrak{f}^Y(\cdot)$ is now independent of $y$.
	Indeed, the GBSDE \eqref{basic equation} becomes a version of \eqref{basic equation.indp}:
	$$
	Y_t=\xi_T+\int_t^T \mathfrak{f}^Y(s)dA_s+(K_T-K_t)-(M_T-M_t),
	\qquad t\in[0,T].
	$$
	Set
	$$
	\widehat{Y}_t=\esssup_{\tau \in \mathcal{T}_{[t,T]}}
	\mathbb{E}\left[
	\xi_\tau+\int_{t}^{\tau}\mathfrak{f}^Y(s)dA_s
	\mid \mathcal{F}_t
	\right],
	\qquad t\in[0,T].
	$$
	By Proposition \ref{Snell.Envlp}, applied to the generator $\mathfrak{f}^Y(\cdot)$, we derive that $\widehat{Y}$ is the first component of the solution to the GRBSDE \eqref{basic equation.indp} associated with
	$(\xi,\mathfrak{f}^Y(\cdot),A)$ and the unqiuenss result (see Corollary \ref{uniq.coro.indp.y}) allows us to derive $Y=\widehat{Y}$. Finally, using the definition of $\mathfrak{f}^Y$, we deduce that
	$$
	Y_t=\esssup_{\tau \in \mathcal{T}_{[t,T]}}
	\mathbb{E}\left[
	\xi_\tau+\int_{t}^{\tau}\mathfrak{f}(s,Y_{s-})dA_s
	\mid \mathcal{F}_t
	\right],
	\qquad t\in[0,T].
	$$
	This completes the proof.
\end{proof}

\subsubsection{A variant of the GBSDE \eqref{GBSDE}}

Let $\mathfrak{g}$ be an additional generator. We consider the following
variant of the GBSDE associated with the data
$(\xi_T,\mathfrak{f}(\cdot),\mathfrak{g},A)$:
\begin{equation}\label{GBSDE.G}
	Y_t
	=
	\xi_T
	+\int_t^T \mathfrak{f}(s)\,dA_s
	+\int_t^T \mathfrak{g}(s,Y_{s-})\,ds
	-\left(M_T-M_t\right),
	\qquad t\in[0,T].
\end{equation}

In addition to the assumptions imposed on
$(\xi_T,\mathfrak{f}(\cdot),A)$, we make the following assumption on the
additional generator $\mathfrak{g}$.

\begin{description}
	\item[\textbf{(H4)}]
	The generator
	\[
	\mathfrak{g}:
	\Omega\times[0,T]\times\mathbb{R}
	\longrightarrow\mathbb{R}
	\]
	satisfies the following conditions:
	\begin{itemize}
		\item[(i)]
		The mapping
		\[
		(\omega,t,y)
		\longmapsto
		\mathfrak{g}(\omega,t,y)
		\]
		is
		$\mathcal{P}\otimes\mathcal{B}(\mathbb{R})
		/\mathcal{B}(\mathbb{R})$-measurable.
		
		\item[(ii)]
		There exists a constant $K_{\mathfrak{g}}>0$ such that
		\[
		\left|
		\mathfrak{g}(\omega,t,y)
		-
		\mathfrak{g}(\omega,t,y')
		\right|
		\leq
		K_{\mathfrak{g}}|y-y'|,
		\]
		for $d\mathbb{P}\otimes dt$-almost every $(\omega,t)$ and all
		$y,y'\in\mathbb{R}$.
		
		\item[(iii)]
		The process $\mathfrak{g}(\cdot,0)$ satisfies
		\[
		\mathbb{E}\left[
		\int_0^T
		\mathcal{E}(\beta A)_s
		|\mathfrak{g}(s,0)|^2\,ds
		\right]
		<+\infty.
		\]
	\end{itemize}
\end{description}

The main result of this subsection is stated as follows.

\begin{theorem}\label{GBSDEG-special_thm}
	Assume that the data
	$(\xi_T,\mathfrak{f}(\cdot),A)$ satisfy Assumption \textbf{(H)}
	and that the generator $\mathfrak{g}$ satisfies
	\textbf{(H4)}. Then the GBSDE \eqref{GBSDE.G} admits a unique
	solution $(Y,M)$ in
	\[
	\mathcal{S}^{2}_{\beta}
	\times
	\mathcal{M}^{2}_{\beta}.
	\]
\end{theorem}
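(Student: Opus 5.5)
We prove Theorem \ref{GBSDEG-special_thm} by a Picard fixed-point argument in $\mathcal{S}^2_\beta$, in the spirit of the proof of Theorem \ref{GBSDE-general_thm}. The additional $ds$-term can be absorbed into a single Stieltjes integral with respect to the enlarged driver $\widetilde A_t:=A_t+t$. This process is predictable, RCLL, nondecreasing and bounded by $\mathfrak{C}_A+T$. Moreover, $dA_s$ and $ds$ are both absolutely continuous with respect to $d\widetilde A_s$, with predictable densities $a_s,b_s\in[0,1]$ satisfying $a_s+b_s=1$, exactly as in the Introduction. However, the weights in \textbf{(H)} and \textbf{(H4)} are expressed through $\mathcal{E}(\beta A)$ rather than $\mathcal{E}(\beta\widetilde A)$. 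For this reason we do not relabel the driver; we run the contraction directly with the original weight $\mathcal{E}(\beta A)$ and treat the $ds$-part separately.

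\emph{Step 1 (the map).} For $y\in\mathcal{S}^2_\beta$, define $\mathfrak{h}^y(s):=\mathfrak{g}(s,y_{s-})$. This process is predictable by the argument of Remark \ref{rmq1}. Set
\[
\Psi(y)_t:=\mathbb{E}\Big[\xi_T+\int_t^T\mathfrak{f}(s)\,dA_s+\int_t^T\mathfrak{h}^y(s)\,ds\,\Big|\,\mathcal{F}_t\Big].
\]
By \textbf{(H4)}-(ii),(iii), \eqref{bounded_exp_A} and $y\in\mathcal{S}^2_\beta$, we have $\mathbb{E}[\int_0^T\mathcal{E}(\beta A)_s|\mathfrak{h}^y(s)|^2ds]<\infty$. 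The construction in the proof of Theorem \ref{GBSDE-special_thm} (conditional expectation, RCLL modification of the associated martingale, Doob's inequality) carries over verbatim, since the $ds$-integral is continuous and square integrable. It yields $\Psi(y)\in\mathcal{S}^2_\beta$ together with a martingale $M^y\in\mathcal{M}^2_\beta$ solving \eqref{GBSDE.G} with $\mathfrak{g}(s,Y_{s-})$ replaced by $\mathfrak{h}^y$. As in Remark \ref{Otage}, fixed points of $\Psi$ correspond exactly to solutions of \eqref{GBSDE.G}.

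\emph{Step 2 (contraction).} For $y^1,y^2$, the $\mathfrak{f}$-terms cancel, so
\[
\sqrt{\mathcal{E}(\beta A)_t}\,|\bar Y_t|\le K_{\mathfrak{g}}\,\mathbb{E}\Big[\sup_s\sqrt{\mathcal{E}(\beta A)_s}|\bar y_s|\int_t^T q_s\,ds\,\Big|\,\mathcal{F}_t\Big],
\]
with $q_s=\sqrt{\mathcal{E}(\beta A)_t/\mathcal{E}(\beta A)_s}\le1$. The weight $\mathcal{E}(\beta A)$ does not decay in $s$, so $\int_t^Tq_s\,ds$ is only bounded by $T$, and $\beta$ gives no help. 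This is the main obstacle.

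My first remedy is to work instead with the weight $\mathcal{E}(\beta\widetilde A)=\mathcal{E}(\beta A)e^{\beta t}$. This weight is still bounded above and below by \eqref{bounded_exp_A}, so $\mathcal{S}^2_\beta$ is unchanged up to equivalent norms, and the data remain admissible. The contraction estimate of Theorem \ref{GBSDE-general_thm}, with $\widetilde{\mathfrak C}=\mathfrak{C}_A+T$, applies to the combined Lipschitz generator $a_s\cdot 0+b_s\mathfrak{g}(s,y)$, whose constant in $y$ is at most $K_{\mathfrak g}$. It gives
\[
\|\Psi(y^1)-\Psi(y^2)\|^2\le K_{\mathfrak g}^2\Big(\tfrac{4}{\beta^2}+\tfrac{2(\mathfrak{C}_A+T)}{\beta}\Big)\|y^1-y^2\|^2,
\]
which is a contraction for $\beta$ large. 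Should a weight-independent argument be preferred, a fallback is to partition $[0,T]$ into finitely many intervals of length $\delta$ with $K_{\mathfrak g}\delta<1/2$ and solve backward on each interval. Doob's inequality costs a factor $4$, which is absorbed by taking $\delta$ smaller.

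\emph{Step 3 (conclusion).} By the Banach fixed-point theorem in the complete space $\mathcal{S}^2_\beta$, we obtain a unique $Y$ with $\Psi(Y)=Y$, and $M:=M^Y\in\mathcal{M}^2_\beta$. Uniqueness of $(Y,M)$ in $\mathcal{S}^2_\beta\times\mathcal{M}^2_\beta$ follows because any solution is a fixed point, after which $M$ is determined by the equation.
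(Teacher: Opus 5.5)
Your route is essentially the paper's. The paper runs the contraction with the weight $W^{\beta,\gamma}_t=\mathcal{E}(\beta A)_t e^{\gamma t}$, and your $\mathcal{E}(\beta\widetilde A)_t=\mathcal{E}(\beta A)_t e^{\beta t}$ is the case $\gamma=\beta$. The reduction to the driver $B=A+t$ is also the paper's closing remark after the proof. Your Step~2 diagnosis, that $\mathcal{E}(\beta A)$ alone gives no decay against $ds$, is exactly why the factor $e^{\gamma t}$ is introduced. Keeping the $\beta$ of \textbf{(H)} fixed and tuning a separate $\gamma$, as the paper does, avoids the norm-equivalence detour, but that detour is valid under \textbf{(H3)}.

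The step to repair is citing the contraction estimate of Theorem~\ref{GBSDE-general_thm} as a black box. Its proof bounds $\sqrt{\mathcal{E}(\beta A)_s}\,|\bar y_{s-}|$ by $\sup_u\sqrt{\mathcal{E}(\beta A)_u}\,|\bar y_u|$. At a jump time $s$ of the driver this is false, being off by the factor $\sqrt{1+\beta\Delta_-A_s}$. In fact the conclusion of that theorem fails without a smallness condition on $K\Delta_-A$. Take a trivial filtration, $T=2$, $A=\mathds{1}_{\{t\geq 1\}}$, $f(t,y)=y$ and $\xi_T=1$. Then $M\equiv 0$ and \eqref{GBSDE} forces $Y_{1-}=1+Y_{1-}$, so there is no solution.

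What rescues your argument is that the $y$-dependent part of your combined generator only charges $b_s\,d\widetilde A_s=ds$. Lebesgue measure does not see the countably many times where $\bar y_{s-}\neq \bar y_s$ or $W_{s-}\neq W_s$. Hence $\sqrt{W_s}\,|\bar y_{s-}|\leq X:=\sup_u\sqrt{W_u}\,|\bar y_u|$ holds $ds\otimes d\mathbb{P}$-a.e., which is precisely the observation the paper's proof makes. You should write the estimate out directly. With $W=\mathcal{E}(\beta\widetilde A)$ you have $\sqrt{W_t/W_s}\leq e^{-\beta(s-t)/2}$ for $s\geq t$, so $\sqrt{W_t}\,|\bar Y_t|\leq \frac{2K_{\mathfrak g}}{\beta}\,\mathbb{E}[X\mid\mathcal{F}_t]$. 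Doob's inequality then gives $\|\bar Y\|^2\leq \frac{16K_{\mathfrak g}^2}{\beta^2}\|\bar y\|^2$. Your displayed constant, copied from Theorem~\ref{GBSDE-general_thm}, omits Doob's factor $4$; this is harmless for large $\beta$. Your short-interval fallback is sound and self-contained, since it too only integrates $\bar y_{s-}$ against $ds$.
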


\begin{proof}
	The proof of this result follows standard arguments from the theory of BSDEs. Similar results have already been established in several works; we refer, for instance, to \cite[Theorem 1]{eddahbifakhouriouknine2017} and \cite[Appendix A]{Elmansouri2024}. Nevertheless, since the process $A$ is RCLL, and for the convenience of the reader, we provide the main steps of the proof.
	
	The first step consists in establishing existence and uniqueness for the following GBSDE:
	\begin{equation}\label{GBSDE.G.ind}
		Y_t
		=
		\xi_T
		+\int_t^T \mathfrak{f}(s)\,dA_s
		+\int_t^T \mathfrak{g}(s)\,ds
		-\left(M_T-M_t\right),
		\qquad t\in[0,T].
	\end{equation}
	Here, the coefficient $\mathfrak{g}$ is independent of the state variable $y$. The proof of this particular case is standard and follows the same line of reasoning as the proof of Theorem \ref{GBSDE-special_thm}. We therefore omit the details.
	
	In the second step, we consider the general case in which the generator $\mathfrak{g}$ depends on the state variable $y$. We apply a fixed-point argument based on the GBSDE \eqref{GBSDE.G.ind}. The proof is inspired by that of Theorem \ref{GBSDE-general_thm}, with the main difference being the choice of an appropriate weighted space for the state process $Y$.

	Let
	\[
	W_t^{\beta,\gamma}
	:=
	\mathcal{E}(\beta A)_t e^{\gamma t},
	\qquad t\in[0,T],
	\]
	where \(\beta,\gamma>0\), and define
	\[
	\|Y\|_{\mathcal{S}_{\beta,\gamma}^{2}}^{2}
	:=
	\mathbb{E}\left[
	\sup_{0\leq t\leq T}
	W_t^{\beta,\gamma}|Y_t|^{2}
	\right].
	\]
	
	We define again the mapping
	$
	\Psi:\mathcal{S}_{\beta,\gamma}^{2}
	\longrightarrow
	\mathcal{S}_{\beta,\gamma}^{2}
	$
	as follows. For a given process
	\(y\in\mathcal{S}_{\beta,\gamma}^{2}\), we set
	\(\Psi(y)=Y\), where
	\[
	Y_t
	=
	\mathbb{E}\left[
	\xi_T
	+\int_t^T \mathfrak{f}(s)\,dA_s
	+\int_t^T \mathfrak{g}(s,y_{s-})\,ds
	\Bigm|\mathcal{F}_t
	\right],
	\qquad t\in[0,T].
	\]
	
	Equivalently, there exists a martingale \(M\) such that the pair $(Y,Z)$ satisfies
	\[
	Y_t
	=
	\xi_T
	+\int_t^T \mathfrak{f}(s)\,dA_s
	+\int_t^T \mathfrak{g}(s,y_{s-})\,ds
	-\left(M_T-M_t\right),
	\qquad t\in[0,T].
	\]
	The mapping $\Psi$ is well-defined from the fist part of the current proof.
	
	Let
	$
	y^1,y^2\in\mathcal{S}_{\beta,\gamma}^{2},
	$
	and set
	$
	Y^1=\Psi(y^1),$ $
	Y^2=\Psi(y^2).
	$
	For any \(\mathfrak{S}\in\{Y,y\}\), denote
	$
	\overline{\mathfrak{S}}
	:=
	\mathfrak{S}^1-\mathfrak{S}^2.
$
	
	Since the terminal condition and the process \(\mathfrak{f}\) are
	the same in both equations, these terms cancel when taking the
	difference. Consequently,
	\[
	\overline{Y}_t
	=
	\mathbb{E}\left[
	\int_t^T
	\left(
	\mathfrak{g}(s,y^1_{s-})
	-
	\mathfrak{g}(s,y^2_{s-})
	\right)\,ds
	\Bigm|\mathcal{F}_t
	\right].
	\]
	
	Using assumption \textbf{(H4)}-(ii), we obtain
	\[
	|\overline{Y}_t|
	\leq
	K'\,
	\mathbb{E}\left[
	\int_t^T|\overline{y}_{s-}|\,ds
	\Bigm|\mathcal{F}_t
	\right].
	\]
	
	Multiplying both sides by the positive increasing process
	\(\sqrt{W_t^{\beta,\gamma}}\), we get
	\[
	\begin{aligned}
		\sqrt{W_t^{\beta,\gamma}}|\overline{Y}_t|
		&\leq
		K'\,
		\mathbb{E}\left[
		\int_t^T
		\sqrt{W_s^{\beta,\gamma}}
		|\overline{y}_{s-}|
		\frac{\sqrt{W_t^{\beta,\gamma}}}
		{\sqrt{W_s^{\beta,\gamma}}}
		\,ds
		\Bigm|\mathcal{F}_t
		\right].
	\end{aligned}
	\]
	
	To simplify notations, we set
	\[
	\bar{X}
	:=
	\sup_{0\leq t\leq T}
	\sqrt{W_u^{\beta,\gamma}}
	|\overline{y}_u|.
	\]
	Since the trajectories of the processes are RCLL, their sets
	of discontinuity times are at most countable. Therefore, the
	Lebesgue measure \(ds\) does not charge these discontinuity times,
	and
	\[
	\sqrt{W_s^{\beta,\gamma}}
	|\overline{y}_{s-}|
	\leq \bar X,
	\qquad
	ds\otimes d\mathbb{P}\text{-a.e.}
	\]
	Moreover, for \(t\leq s\),
	\[
	\frac{\sqrt{W_t^{\beta,\gamma}}}
	{\sqrt{W_s^{\beta,\gamma}}}
	=
	\frac{\sqrt{\mathcal{E}(\beta A)_t}}
	{\sqrt{\mathcal{E}(\beta A)_s}}
	e^{-\frac{\gamma}{2}(s-t)}.
	\]
	Since $
	\frac{\sqrt{\mathcal{E}(\beta A)_t}}
	{\sqrt{\mathcal{E}(\beta A)_s}}
	\leq 1,
$
	we get
	\[
	\frac{\sqrt{W_t^{\beta,\gamma}}}
	{\sqrt{W_s^{\beta,\gamma}}}
	\leq
	e^{-\frac{\gamma}{2}(s-t)}.
	\]
	Then it follows that
	\[
	\begin{aligned}
		\int_t^T
		\frac{\sqrt{W_t^{\beta,\gamma}}}
		{\sqrt{W_s^{\beta,\gamma}}}
		\,ds
		\leq
		\int_t^T
		e^{-\frac{\gamma}{2}(s-t)}
		\,ds
		=
		\frac{2}{\gamma}
		\left(
		1-e^{-\frac{\gamma}{2}(T-t)}
		\right)
		\leq
		\frac{2}{\gamma}.
	\end{aligned}
	\]
	Consequently,
	\[
	\sqrt{W_t^{\beta,\gamma}}|\overline{Y}_t|
	\leq
	\frac{2K_g}{\gamma}
	\mathbb{E}\left[
	\bar X\mid\mathcal{F}_t
	\right].
	\]
	Therefore, by Doob's maximal inequality, we get
	\[
	\begin{aligned}
		\|\Psi(y^1)-\Psi(y^2)\|_{\mathcal{S}_{\beta,\gamma}^{2}}^2
		=
		\mathbb{E}\left[
		\sup_{0\leq t\leq T}
		W_t^{\beta,\gamma}
		|\overline{Y}_t|^2
		\right]
		\leq
		\frac{16K'^2}{\gamma^2}
		\mathbb{E}\left[|X|^2\right]
		=
		\frac{16K'^2}{\gamma^2}
		\|y^1-y^2\|_{\mathcal{S}_{\beta,\gamma}^{2}}^2.
	\end{aligned}
	\]
	Thus,
	\[
	\|\Psi(y^1)-\Psi(y^2)\|_{\mathcal{S}_{\beta,\gamma}^{2}}
	\leq
	\frac{4K'}{\gamma}
	\|y^1-y^2\|_{\mathcal{S}_{\beta,\gamma}^{2}}.
	\]
	Choosing
	$
	\gamma>4K',
	$
	we conclude that \(\Psi\) is a strict contraction on
	\(\mathcal{S}_{\beta,\gamma}^{2}\). Hence, by the Banach fixed-point
	theorem, there exists a unique fixed point
	$
	Y\in\mathcal{S}_{\beta,\gamma}^{2}.
	$ This fixed point, together with its associated martingale \(M\) (see Remark ),
	is the unique solution of the GBSDE \eqref{GBSDE.G}.
	
	Completing the proof.
\end{proof}
	
	\begin{remark}
		Following the argument used in Theorem \ref{GBSDEG-special_thm}, we may see that the result  still holds true if we replace the measurability assumption of the coefficient $\mathfrak{g}$ from $ \mathcal{P}  \otimes \mathcal{B}(\mathbb{R})/\mathcal{B}(\mathbb{R})$ to $\textnormal{\textbf{Prog}}\otimes \mathcal{B}(\mathbb{R})/\mathcal{B}(\mathbb{R})$. The reason for keeping the predictable measurability is giving in the next discussion, where we may simply apply Theorem \ref{GBSDE-general_thm} to get the result of Theorem \ref{GBSDEG-special_thm}.
	\end{remark}

	Let us note that the GBSDE \eqref{GBSDE.G} can be writing in a similar form as \eqref{GBSDE}. Indeed, let us set\footnote{The introduced process $B$ is similar to the one presented in Example \ref{exemple}-2.}
	$$
	B_t:=A_t+t,\qquad t \in [0,T].
	$$
	Then the process $(B_t)_{t \leq T}$ is predictable RCLL, nondecreasing, starting from $0$ and bounded by $\mathfrak{C}_A+T$. In other word, it satisfies similar properties as the driver $A$ of the GBSDE \eqref{GBSDE}. Moreover, since $dA_t \ll dB_t$ and $dt \ll dB_t$, using  Theorem 5.14 in \cite[Chapter V]{he2019semimartingale}, we can found two predictable processes $(a_t)_{t \leq T}$ and $(b_t)_{t \leq T}$ such that
	\begin{equation}\label{ab}
	dA_t=a_t dB_t,\quad dt=b_t dB_t,\quad a_t+b_t=1,\quad 0 \leq a_t,b_t, \leq 1,
\end{equation}
	and the GBSDE \eqref{GBSDE.G} can be rewriting as:
	\begin{equation}\label{GBSDE.GN}
		Y_t=\xi_T+\int_{t}^{T}\mathfrak{h}(s,Y_{s-})dB_s-\left(M_T-M_t\right),\quad t \in [0,T],
	\end{equation}
	where
	$$
	\mathfrak{h}(t,y)=a_t \mathfrak{f}(t)+b_t\mathfrak{g}(t,y).
	$$
	Furthermore, we have
	\begin{itemize}
		\item[(i)] The random field $(\omega,t,y) \mapsto \mathfrak{h}(\omega,t,y)$ is
		$\mathcal{P} \otimes \mathcal{B}(\mathbb{R})/\mathcal{B}(\mathbb{R})$-measurable.
		\item[(ii)] for all $(\omega,t) \in \Omega$ and $y,y' \in \mathbb{R}^2$, we have
		$$
		\left|\mathfrak{h}(\omega,t,y)-\mathfrak{h}(\omega,t,y')\right| \leq K' |y-y'|
		$$
		
		\item[(iii)] $\mathbb{E}\left[\int_{0}^{T}\mathcal{E}(\beta A)_s |\mathfrak{h}(s,0)|^2 dB_s\right]<+\infty$.
	\end{itemize}

Consequently, by a direct application of Theorem \ref{GBSDE-general_thm} to the GBSDE \eqref{GBSDE.GN}, we obtain the existence and uniqueness result for the GBSDE \eqref{GBSDE.G} established in Theorem \ref{GBSDEG-special_thm}.

		\subsubsection{Existence via penalization approximation}
	\begin{theorem}\label{main-result_indp}
		Under assumption \textbf{(H)}, the GRBSDE associated with $(\xi,\mathfrak{f}(\cdot),A)$ has a unique solution $(Y,M,K) \in \mathcal{S}^2_\beta \times \mathcal{M}^2_\beta \times \mathcal{K}^2$.
	\end{theorem}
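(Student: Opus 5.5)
Uniqueness is already given by Corollary \ref{uniq.coro.indp.y}, so the plan concerns existence through a penalization scheme that avoids the $n$-dependent jump restriction \eqref{intro10}. The key device is to penalize with respect to Lebesgue time rather than the driver $A$. For each $n\geq1$ we consider the GBSDE of type \eqref{GBSDE.G} with $\mathfrak{g}_n(s,y):=n(y-\xi_s)^-$:
\[
Y^n_t=\xi_T+\int_t^T\mathfrak{f}(s)\,dA_s+\int_t^T n\left(Y^n_{s-}-\xi_{s}\right)^-ds-\left(M^n_T-M^n_t\right).
\]
Since $ds$ does not charge the countably many jump times, $(Y^n_{s-}-\xi_{s})^-$ coincides $ds$-a.e. with the predictable process $(Y^n_{s-}-\xi_{s-})^-$, so that \textbf{(H4)} holds with $K_{\mathfrak{g}}=n$. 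Theorem \ref{GBSDEG-special_thm} then provides a unique $(Y^n,M^n)\in\mathcal{S}^2_\beta\times\mathcal{M}^2_\beta$ with no constraint on $\Delta_-A$, because the penalization never charges the jumps of $A$. We set $K^n_t:=\int_0^tn(Y^n_{s-}-\xi_{s})^-ds$, which is continuous.

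We next identify the limit through optimal stopping rather than through Cauchy estimates, since Cauchy estimates break down for non-right-continuous obstacles. First, by Proposition \ref{GBSDE-propo3}, or by a direct comparison argument applied to the $ds$-term, $Y^n$ is nondecreasing in $n$. Second, we prove the classical representation
\[
Y^n_t=\esssup_{\tau\in\mathcal{T}_{[t,T]}}\mathbb{E}\Big[\int_t^\tau\mathfrak{f}(s)dA_s+\int_t^\tau n(Y^n_{s-}-\xi_{s})^-ds+\xi_\tau\mathds{1}_{\{\tau<T\}}\wedge Y^n_\tau+\xi_T\mathds{1}_{\{\tau=T\}}\Bigm|\mathcal{F}_t\Big].
\]
From this representation we deduce $Y^n\le Y$, where $Y$ is the Snell-envelope process of Proposition \ref{Snell.Envlp}; indeed $Y$ solves the reflected equation, so $Y\ge\xi$ and the penalization term vanishes for $Y$. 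Hence $Y^n\uparrow\widetilde Y\le Y$. Uniform bounds $\sup_n\mathbb{E}[\sup_t\mathcal{E}(\beta A)_t|Y^n_t|^2+|K^n_T|^2]<\infty$ follow from Itô's formula as in Proposition \ref{GRBSDE-propo}, combined with the usual trick $\int(Y^n_{s-}-\xi_s)dK^n_s\le0$ and \textbf{(H1)}.

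The main obstacle is to show $\widetilde Y=Y$, that is, that the limit dominates $\xi$ everywhere, including at right-jump points of $\xi$ where $Y_t>Y_{t+}$ may occur. Our first attempt will be to prove that $\widetilde Y$ is a strong supermartingale dominating $\xi$: each $Y^n+\int_0^\cdot\mathfrak{f}dA$ is a supermartingale, and monotone limits of such processes remain strong (optional) supermartingales. For the inequality $\widetilde Y\ge\xi$, we plan to use the Gal'chouk sequence of stopping times from Remark \ref{existence}, which exhausts the right jumps of $\xi$, together with a section theorem. The argument is by contradiction: if $\widetilde Y_\sigma<\xi_\sigma-\varepsilon$ on a set of positive measure, then for large $n$ the process $Y^n_{s-}$ stays below $\xi_s-\varepsilon/2$ on a right neighbourhood of positive Lebesgue measure, which forces $n\int(\cdot)^-ds\to\infty$ and contradicts the uniform bound on $\mathbb{E}[|K^n_T|^2]$. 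This argument must be carried out carefully, because $\xi$ is only regulated and we need $\limsup_{s\downarrow\sigma}\xi_s$ to control $\xi_\sigma$. If this does not hold directly, we instead modify the penalty so that it acts on the right limit $\xi_{s+}$ and handle the points where $\xi_\sigma>\xi_{\sigma+}$ separately by a Snell-envelope correction. This modification is the step where the adapted "right-jump" penalization enters.

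Once $\widetilde Y$ is shown to be a strong supermartingale dominating $\xi$, minimality of the Snell envelope gives $\widetilde Y\ge Y$, and therefore $\widetilde Y=Y$. From there, the Mertens decomposition as in Proposition \ref{Snell.Envlp} produces $(M,K)$ together with the Skorokhod conditions. Weak convergence $K^n\to K$ in $L^2$ and $M^n\to M$ then follow from the equations and the uniform bounds. Membership in $\mathcal{S}^2_\beta\times\mathcal{M}^2_\beta\times\mathcal{K}^2$ comes from \eqref{bounded_exp_A} and the a priori estimates.
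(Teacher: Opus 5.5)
There is a genuine gap, located exactly at the step you call the main obstacle. With penalization only in $ds$ and no right-jump term, $Y^n$ is RCLL, and the increasing limit $\widetilde Y$ need not dominate $\xi$ at times where $\Delta_+\xi<0$. So in general $\widetilde Y\neq Y$. Take $T=1$, $\mathfrak{f}\equiv0$ and the deterministic obstacle $\xi_t=\mathds{1}_{\{t\leq 1/2\}}$, which is optional, regulated and bounded. Your scheme is then an ODE with $M^n\equiv0$, whose solution is
\[
Y^n_t=\bigl(1-e^{-n(1/2-t)}\bigr)\mathds{1}_{\{t\leq 1/2\}},\qquad t\in[0,1].
\]
Hence $Y^n_{1/2}=0$ for every $n$, and $\widetilde Y_{1/2}=0<1=\xi_{1/2}=Y_{1/2}$.

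Your contradiction argument can only work at times with $\xi_t\leq\limsup_{s\downarrow t}\xi_s=\xi_{t+}$, i.e.\ when $\Delta_+\xi_t\geq0$; this is the first case of Lemma \ref{dom}. At a negative right jump, the obstacle just to the right of $t$ is close to $\xi_{t+}<\xi_t$. So $\widetilde Y_t<\xi_t-\varepsilon$ produces no set of positive Lebesgue measure on which the $ds$-penalty is active. Your fallback does not repair this. Since $\xi_{s+}=\xi_s$ off a countable set, penalizing $\xi_{s+}$ in $ds$ is the same scheme. The ``Snell-envelope correction'' at the points $\xi_t>\xi_{t+}$ is left unspecified, yet it is the whole difficulty.

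The missing idea is the discrete reflection in \eqref{penalization}. One adds $\sum_{t\leq\sigma_{n,i}<T}(Y^n_{\sigma_{n,i}+}-\xi_{\sigma_{n,i}})^-$, where the finite families $\{\sigma_{n,i}\}_i$ of stopping times catch the right jumps with $\Delta_+\xi<-1/n$ and have graphs nested in $n$. This term does three things:
\begin{enumerate}
\item The equation can be solved backward on each $(\sigma_{n,i-1},\sigma_{n,i}]$ by Theorem \ref{GBSDEG-special_thm}.
\item It forces $Y^n_{\sigma_{n,i}}=Y^n_{\sigma_{n,i}+}\vee\xi_{\sigma_{n,i}}\geq\xi_{\sigma_{n,i}}$. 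This passes to the monotone limit because every negative right jump of $\xi$ is eventually (on the exhausted set) among the $\sigma_{n,i}$.
\item The nesting is what lets Proposition \ref{comp_pena} give $Y^n\leq Y^{n+1}$. Proposition \ref{GBSDE-propo3}, which you cite, covers only $y$-independent generators.
\end{enumerate}
With this term in place, the rest of your outline essentially follows the paper's Lemmas \ref{increasing}--\ref{SKO}: uniform bounds, $Y^n\leq Y$ because the solution is a supersolution of the scheme, a strong-supermartingale limit, and minimality of the Snell envelope. The one difference is that the paper obtains the upper bound through the obstacle $\xi\wedge Y^n$ and Proposition \ref{Snell.Envlp.general} rather than by comparison.

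Finally, you take $Y$ from Proposition \ref{Snell.Envlp} and read $(M,K)$ off its Mertens decomposition. That proposition, with its Mertens estimates and Corollary \ref{uniq.coro.indp.y}, already yields the statement directly. As written, your penalization therefore contributes nothing unless its limit is shown to be the solution, and that is exactly what fails without the right-jump term.
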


	To prove Theorem \ref{main-result_indp}, we will use a new version of penalization schemes that includes the right-jumps of $\xi$. To this end, we consider an approximation of the solution of GRBSDE \eqref{basic equation.indp} associated with $(\xi,\mathfrak{f}(\cdot),A)$ by a modified penalization method. For each $n \geq 1$, the penalization method is of the following form:
	\begin{equation}\label{penalization}
		\begin{split}
		 Y^n_t=\xi_T&+\int_{t}^{T} \big\{\mathfrak{f}(s)dA_s
		 +n \big(Y^n_{s-}-\xi_{s-}\big)^{-}ds\big\}
		-\int_{t}^{T} dM^n_s +\sum_{t \leq \sigma_{n,i} < T} \left(Y^n_{\sigma_{n,i}+}-\xi_{\sigma_{n,i}}\right)^{-},\quad t \in [0,T],
		\end{split}
	\end{equation}
	Here, $\left\{\sigma_{n,i}\right\}$ denotes a suitably constructed array of stopping times exhausting the right jumps of $\xi$. The existence of such an array is discussed in Remark \ref{existence}. We define $\left\{\sigma_{n,i}\right\}$ inductively as follows:
	We first set
	\begin{equation*}
		\left\{
		\begin{split}
			& \sigma_{1,0}=0,\\
			& \sigma_{1,i}=\inf\left\{t > \sigma_{1,i-1}:\Delta_{+}\xi_t <-1 \right\} \wedge T,\quad i=1,2,\cdots,k_1,\\
			& \sigma_{1,k_1+1}=T,
		\end{split}
		\right.
	\end{equation*}
	for some fixed $k_1 \in \mathbb{N}^{\ast}$. Next, for each $n \in \mathbb{N}$, assuming that the array $\left\{\sigma_{n,i}\right\}$ has already been constructed, we define $\sigma_{n+1,0}=0$ and
	$$
	\sigma_{n+1,i}=\inf\left\{t > \sigma_{n+1,i-1}:\Delta_{+}\xi_t <-\frac{1}{n+1} \right\} \wedge T,\quad i=1,2,\cdots,j_{n+1},
	$$
	where the integer $j_{n+1}$ is chosen so that $\mathbb{P}\left(\sigma_{n+1,j_{n+1}} <T\right) \rightarrow 0$ as $n \rightarrow +\infty$\footnote{For example, we can chose $j_{n+1}$ such that $\mathbb{P}\big(\sigma_{n+1,j_{n+1}} <T\big) \leq \frac{1}{n}$ as in \cite[Section 4 (p. 27)]{klimsiak2020reflected} (see also \cite{klimsiak2020reflected,Marzougue2021,Marzougue2023}).}, and
	$$
	\sigma_{n+1,j_{n+1}+i}=\sigma_{n+1,j_{n+1}} \vee \sigma_{n,i},\quad i=1,2,\cdots, k_n, \qquad \text{with} \qquad k_{n+1}=j_{n+1}+k_n.
	$$
	Finally, we set $\sigma_{n+1,k_{n+1}+1}=T$ and $\mathfrak{f}_n(\omega,t,y)=a_t\mathfrak{f}(\omega,t)+nb_t(y-\xi_{t-})^-$, where the processes $(a_t)_{t \leq T}$ and $(b_t)_{t \leq T}$ are those introduced in \eqref{ab}.
	
	To illustrate the structure of the family $\left\{\sigma_{n,i}\right\}$, we may arrange its elements in the following array:
	\[
	\left\{\sigma_{n,i}\right\}
	=
	\begin{pmatrix}
		0 & \sigma_{1,1} & \sigma_{1,2} & \cdots & \sigma_{1,k_1} & T \\
		0 & \sigma_{2,1} & \sigma_{2,2} & \cdots & \sigma_{2,k_2} & T \\
		\vdots & \vdots & \vdots & \ddots & \vdots & \vdots \\
		0 & \sigma_{n,1} & \sigma_{n,2} & \cdots & \sigma_{n,k_n} & T \\
		\vdots & \vdots & \vdots & \ddots & \vdots & \vdots
	\end{pmatrix}
	\quad
	\begin{array}{l}
		\longrightarrow Y^1 \\
		\longrightarrow Y^2 \\
		\vdots \\
		\longrightarrow Y^n \\
		\vdots
	\end{array}
	\]
	
		Let us note that, for each $n \geq 1$, the dynamic of the state process $(Y^n_t)_{t \leq T}$ of the GBSDE \eqref{penalization} can written as follows:
	\begin{equation}\label{used_dyn}
		Y^n_t=Y^{n,\ast}_t+\sum_{t \leq s < T} \Delta_{+}Y^n_s, \qquad t \in [0,T],
	\end{equation}
	where $(Y^{n,\ast}_t)_{t \leq T}$ is an RCLL $\mathbb{F}$-adapted process defined as the solution of the following GBSDE:
	$$
	Y^{n,\ast}_t=\xi+\int_{t}^{T} \big\{\mathfrak{f}(s)dA_s
	+n \big(Y^n_{s-}-\xi_{s-}\big)^{-}ds\big\}
	-\int_{t}^{T} dM^n_s,\qquad t \in [0,T]
	$$
	and
	$$
	\sum_{t \leq s < T}\Delta_{+}Y^n_s:=-\sum_{t \leq \sigma_{n,i} < T} \left(Y^n_{\sigma_{n,i}+}-\xi_{\sigma_{n,i}}\right)^{-},\qquad t \in [0,T].
	$$
	Note also that from \eqref{penalization}, we have in particular:
$$
	\Delta_+ Y^n_t
	=
	-\sum_{i=0}^{k_n}
	\mathds{1}_{\{t=\sigma_{n,i}\}}
	\left(Y^n_{t+}-\xi_t\right)^{-}
	=
	-\sum_{\substack{0\leq i\leq k_n\\ \sigma_{n,i}=t}}
	\left(Y^n_{\sigma_{n,i}+}-\xi_{\sigma_{n,i}}\right)^{-},
	\qquad t\in[0,T].
	$$
	
	Note that the penalization scheme equations \eqref{penalization} are well-posed. Indeed, we observe that, on each interval $(\sigma_{n,i-1},\sigma_{n,i}]$ for $i=1,2,\cdots,k_n+1$, the GBSDE \eqref{penalization} becomes a classical  GBSDEs of the form:
	\begin{equation}
		\begin{split}
			Y^n_t=&\xi_{\sigma_{n,i}} \vee Y^n_{\sigma_{n,i}+}+\int_{t}^{\sigma_{n,i}} \mathfrak{f}_n(s,Y^n_{s-})dB_s-\int_{t}^{\sigma_{n,i}} dM^n_s,\qquad t \in (\sigma_{n,i-1},\sigma_{n,i}],
		\end{split}
		\label{Penelization equations local}
	\end{equation}
		with the convention that $Y^n_{0}= Y^n_{0+}\vee \zeta_{0}$ and $Y^n_T=\xi_T$. In other words, to solve the GBSDE \eqref{penalization}, we consider it as a classical GBSDE \eqref{Penelization equations local} on each subinterval: $[0, \sigma_{n,1}], (\sigma_{n,1}, \sigma_{n,2}], \cdots, (\sigma_{n,k_n}, T]$, and then construct the corresponding solution backwardly starting from $(\sigma_{n,k_n}, T]$.  We therefore have
	\begin{equation*}
		\begin{split}
			&\mathbb{E}\left[ \int_{0}^{T} \mathcal{E}(\beta A)_s \left| {\mathfrak{f}_n(s,0)}\right|^2 dB_s\right] 
			\leq2\left( \mathbb{E}\left[ \int_{0}^{T} \mathcal{E}(\beta A)_s\left| {\mathfrak{f}(s)}\right|^2 dA_s\right] + {n^2 T}\mathbb{E}\left[ \esssup_{\tau \in \mathcal{T}_{0,T}} \mathcal{E}(\beta A)_\tau  |\xi_\tau|^2 \right]   \right)<+\infty .
		\end{split}
	\end{equation*}
	Hence, from Theorem \ref{GBSDEG-special_thm}, the GBSDE \eqref{Penelization equations local} associated with $(\xi,\mathfrak{f}_n)$ has a unique solution $\left(Y^n,M^n\right)\in \mathcal{S}^2_{\beta} \times \mathcal{M}^2_{\beta}$ on $(\sigma_{n,k_n}, T]$. Following an inductive process, starting now from $(\sigma_{n,k_n-1}, \sigma_{n,k_n}]$ and so on until  $[0, \sigma_{n,1}]$,  we construct a unique solution $\left(Y^n,M^n\right)\in \mathcal{S}^2_{\beta} \times \mathcal{M}^2_{\beta}$ to the GBSDE \eqref{penalization} for each $n \geq 1$.

	The solution of the GBSDE \eqref{penalization} may be stated in the following form:
	\begin{equation}
		\begin{split}
			Y^n_t=\xi&+\int_{t}^{T} \mathfrak{f}(s)dA_s+\int_{t}^{T}dK^{n}_s-\int_{t}^{T}  dM^n_s,\qquad t \in [0,T],
		\end{split}
		\label{Short dynamic of Yn}
	\end{equation}
	where $\left(K^{n}_t\right)_{t \leq T}$ is a regulated process with decomposition:
	\begin{equation}
		K^{n}_t:=K^{n,\ast}_t+\sum_{0 \leq s < t} \Delta_{+} K^{n}_s:=n\int_{0}^{t} \left(Y^n_{s-}-\xi_{s-}\right)^{-}ds+\sum_{0 \leq \sigma_{n,i} < t} \left(Y^n_{\sigma_{n,i}+}-\xi_{\sigma_{n,i}} \right)^{-},\quad t \in [0,T].
		\label{K-n}
	\end{equation}

	We shall need the following auxiliary results toward  the rest of the paper.
	\begin{lemma}\label{result needed for majoration}
		For each $n \geq 1$, and for any $t \in [0,T]$, we have
		$$
		\int_{t}^{T}\left(Y^n_{s-}-\xi_{s-}\right)dK^{n,\ast}_s+\sum_{t \leq s < T}\left(Y^n_s-\xi_s\right)\Delta_{+}K^{n}_s \leq 0,~\text{a.s.}
		$$
	\end{lemma}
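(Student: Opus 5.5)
The plan is to split the left-hand side into its two components, the continuous (Lebesgue) part and the right-jump part, and show that each is nonpositive pathwise. Both signs come from the elementary identity $x\,x^-=-(x^-)^2\le 0$, valid for every real $x$.

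\emph{Continuous part.} By \eqref{K-n}, $dK^{n,\ast}_s=n\left(Y^n_{s-}-\xi_{s-}\right)^{-}ds$. This measure is absolutely continuous with respect to Lebesgue measure, so it has no atoms, and we can write
\[
\int_t^T\left(Y^n_{s-}-\xi_{s-}\right)dK^{n,\ast}_s
= n\int_t^T\left(Y^n_{s-}-\xi_{s-}\right)\left(Y^n_{s-}-\xi_{s-}\right)^{-}ds
= -n\int_t^T\left|\left(Y^n_{s-}-\xi_{s-}\right)^{-}\right|^2ds\le 0 .
\]
The left limits exist because $Y^n$ is RCLL on each subinterval and $\xi$ is regulated, so the integrand is well defined. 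Since $ds$ does not charge countable sets, nothing subtle happens at the jump times.

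\emph{Right-jump part.} Again by \eqref{K-n}, $\Delta_+K^n_s$ vanishes unless $s=\sigma_{n,i}$ for some $i$. At such a time, $\Delta_+K^n_s=\left(Y^n_{s+}-\xi_s\right)^-$, and \eqref{penalization} gives the relation $\Delta_+Y^n_s=-\Delta_+K^n_s$.

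The key point is to identify $Y^n_s$ at these times. By the local construction \eqref{Penelization equations local}, the value of $Y^n$ at the right endpoint $\sigma_{n,i}$ is
\[
Y^n_{\sigma_{n,i}}=\xi_{\sigma_{n,i}}\vee Y^n_{\sigma_{n,i}+}=Y^n_{\sigma_{n,i}+}+\left(Y^n_{\sigma_{n,i}+}-\xi_{\sigma_{n,i}}\right)^-.
\]
This agrees with $\Delta_+Y^n=-\Delta_+K^n$. We then split into two cases.
\begin{itemize}
\item[(a)] If $\Delta_+K^n_s>0$, then $Y^n_{s+}<\xi_s$, and hence $Y^n_s=\xi_s$. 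The summand $\left(Y^n_s-\xi_s\right)\Delta_+K^n_s$ is therefore zero.
\item[(b)] If $\Delta_+K^n_s=0$, the summand is trivially zero.
\end{itemize}
So every term of $\sum_{t\le s<T}\left(Y^n_s-\xi_s\right)\Delta_+K^n_s$ vanishes, and in particular the sum is $\le 0$. If several indices $i$ share the same time $\sigma_{n,i}=s$, these coincident indices are merged, and the same reasoning applies to the aggregated jump.

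Adding the two parts gives the claim a.s. for every $t$. The main obstacle I expect is in the right-jump step. It requires checking rigorously that the backward gluing in \eqref{Penelization equations local} produces exactly $Y^n_{\sigma_{n,i}}=\xi_{\sigma_{n,i}}\vee Y^n_{\sigma_{n,i}+}$. This has to hold consistently with \eqref{penalization}, including at $\sigma_{n,0}=0$ and when several $\sigma_{n,i}$ coincide (note that $\sigma_{n+1,j+i}=\sigma_{n+1,j}\vee\sigma_{n,i}$ may repeat values). My first attempt would be to evaluate \eqref{penalization} at $t=\sigma_{n,i}$ and at $t\downarrow\sigma_{n,i}$ and subtract, so as to read off $\Delta_+Y^n$ directly.
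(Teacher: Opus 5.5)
Your proposal is correct and is essentially the paper's proof. The Lebesgue part is treated identically via $x\,x^-=-(x^-)^2$. For the right jumps, the paper argues by contradiction that a summand with $Y^n_{\sigma_{n,i}}>\xi_{\sigma_{n,i}}$ and $Y^n_{\sigma_{n,i}+}<\xi_{\sigma_{n,i}}$ cannot occur, because $\Delta_+Y^n=-\Delta_+K^n$ forces $Y^n_{\sigma_{n,i}}=\xi_{\sigma_{n,i}}$; this is exactly your case (a) stated directly.

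Your caveat about coincident $\sigma_{n,i}$ is well placed, and the paper does not address it: the construction via $\vee$ can produce repeated times, yet the argument silently counts each right-jump time once. If a time were counted with multiplicity $m\geq 2$ in \eqref{penalization}, one would get $Y^n_s-\xi_s=(m-1)\left(\xi_s-Y^n_{s+}\right)>0$ and a strictly positive summand. So merging coincident indices, consistently with \eqref{Penelization equations local}, is what both arguments need.
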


	\begin{proof}
		From the definition of the process $\left(\Delta_{+} K^n_t\right)_{t \in [0,T]}$, we have
		\begin{equation}
			\sum_{t \leq s < T}\left(Y^n_s-\xi
			_s\right)\Delta_{+}K^{n}_s=\sum_{t \leq \sigma_{n,i} < T} \left(Y^n_{\sigma_{n,i}}-\xi_{\sigma_{n,i}}\right)\left(Y^n_{\sigma_{n,i}+}-\xi_{\sigma_{n,i}} \right)^{-}.
			\label{in virtue of this equation}
		\end{equation}
		Now, let $n \in \mathbb{N}$ be fixed and assume that there exists an index $i \in \{1,2,\cdots,k_n\}$ such that $t \leq \sigma_{n,i} < T$ and $\left(Y^n_{\sigma_{n,i}}-\xi_{\sigma_{n,i}}\right)\left(Y^n_{\sigma_{n,i}+}-\xi_{\sigma_{n,i}} \right)^{-}>0$. Therefore, we necessarily have $Y_{\sigma_{n,i}}>\xi_{\sigma_{n,i}}$ and $Y^n_{\sigma_{n,i}+}<\xi_{\sigma_{n,i}}$. Moreover, from the penalization scheme \eqref{penalization}, we have  $\Delta_{+}Y^n_{\sigma_{n,i}}=-\Delta_{+} K^{n}_{\sigma_{n,i}}$. Hence, $Y^n_{\sigma_{n,i}}=\xi_{\sigma_{n,i}}$, which leads to a contradiction. Consequently, for every $i \in \{1,2,\cdots,k_n\}$, we have $\big(Y_{\sigma_{n,i}}-\xi_{\sigma_{n,i}}\big)\left(Y^n_{\sigma_{n,i}+}-\xi_{\sigma_{n,i}} \right)^{-}\leq 0$. This inequality, in particular, yields  $\sum_{t \leq s < T}\left(Y^n_s-\xi_s\right)\Delta_{+}K^{n}_s \leq 0$ by virtue of equality \eqref{in virtue of this equation}. On the other hand, we have
		$$
		\int_{t}^{T}\left(Y^n_{s-}-\xi_{s-}\right)dK^{n,\ast}_s=n\int_{t}^{T}\left(Y^n_{s-}-\xi_{s-}\right)\left(Y^n_{s-}-\xi_{s-}\right)^{-}ds=-n\int_{t}^{T}\left(  \left(Y^n_{s-}-\xi_{s-}\right)^{-}\right)^2 ds \leq 0.
		$$
		Completing the proof.
	\end{proof}

	We now state a comparison result for the penalized GBSDE of the form \eqref{penalization}, where right-jumps are included. We shall give a general results that will in particular be applied to the penalized GBSDE with the constructed arrays $\{\sigma_{n,i}\}$ of stopping times. 
	\begin{proposition}\label{comp_pena}
		Let be $\{\sigma_{q,i}\}_{i=0,1,\cdots,k_q+1}$ be a finite sequence of arrays stopping times such that $0=\sigma_{q,0} <\sigma_{q,1}<\sigma_{q,2}<\cdots<\sigma_{q,k_q+1}=T$ for $q \in \{1,2\}$. Let $\{(Y^q,M^q)\}_{q=1,2}$ be a solution of the following GBSDE associated with $(B,\xi_T,\mathfrak{f}_q)$:
		\begin{equation}\label{penalization.comp}
			\begin{split}
				Y^q_t=\xi_T&+\int_{t}^{T} \mathfrak{f}_q(s,Y^q_{s-})dB_s
				-\int_{t}^{T} dM^q_s +\sum_{t \leq \sigma_{q,i} < T} \left(Y^q_{\sigma_{q,i}+}-\xi_{\sigma_{q,i}}\right)^{-},\qquad t \in [0,T].
			\end{split}
		\end{equation}
		Suppose that
		$$
		\xi^1_T\geq \xi^2_T,
		\quad \mathbb{P}\text{-a.s.},
		$$
		and that the following conditions hold:
		\begin{itemize}
			\item[\textnormal{(i)}] $\mathfrak{f}_1(\omega,t,y)\geq \mathfrak{f}_2(\omega,t,y)$ for every $(\omega,t,y)\in\Omega\times[0,T]\times\mathbb{R}$;
			\item[\textnormal{(ii)}] for every $(\omega,t)\in\Omega\times[0,T]$, the mapping $y\mapsto \mathfrak{f}_1(\omega,t,y)$ is nonincreasing;
			\item[\textnormal{(iii)}] $\bigcup_i \llbracket \sigma_{2,i}  \rrbracket \subset \bigcup_i \llbracket \sigma_{1,i}  \rrbracket$
		\end{itemize}
		Then
		$$
		Y^1_t\geq Y^2_t,
		\qquad \forall t\in[0,T],
		\quad \mathbb{P}\text{-a.s.}
		$$
	\end{proposition}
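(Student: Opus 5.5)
We argue backward along the merged grid of stopping times and combine, on each random interval, the comparison argument of Proposition \ref{GBSDE-propo3} with a pathwise comparison at the right-jump times. Let $\{\rho_j\}_{j=0}^{m+1}$ be the ordered family obtained from $\bigcup_i\llbracket\sigma_{1,i}\rrbracket$. By (iii) it already contains every $\sigma_{2,i}$. Then $\rho_0=0<\rho_1<\cdots<\rho_{m+1}=T$. On each stochastic interval $\rrbracket\rho_{j-1},\rho_j\rrbracket$ both equations \eqref{penalization.comp} reduce to RCLL GBSDEs of the form \eqref{GBSDE.GN}, with terminal values $Y^1_{\rho_j}$ and $Y^2_{\rho_j}$ respectively. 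The claim is proved by backward induction on $j$: if $Y^1_{\rho_j}\geq Y^2_{\rho_j}$ a.s., then $Y^1\geq Y^2$ on $\rrbracket\rho_{j-1},\rho_j\rrbracket$ and also at $\rho_{j-1}$.

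\emph{Step 1 (interior comparison).} Assume $Y^1_{\rho_j}\ge Y^2_{\rho_j}$. On $\rrbracket\rho_{j-1},\rho_j\rrbracket$, write
$$\mathfrak{f}_1(s,Y^1_{s-})-\mathfrak{f}_2(s,Y^2_{s-})=\big[\mathfrak{f}_1(s,Y^1_{s-})-\mathfrak{f}_1(s,Y^2_{s-})\big]+\big[\mathfrak{f}_1(s,Y^2_{s-})-\mathfrak{f}_2(s,Y^2_{s-})\big].$$
By (i) the second bracket is $\ge0$. By (ii) the first bracket is $\ge0$ whenever $Y^1_{s-}<Y^2_{s-}$. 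We then run the stopping-time argument of Proposition \ref{GBSDE-propo3}. Define $\tau^\varepsilon=\inf\{t>\rho_{j-1}:Y^1_t\le Y^2_t-\varepsilon\}\wedge\rho_j$ and $\nu=\inf\{t\ge\tau:Y^1_t\ge Y^2_t\}\le\rho_j$. On $\rrbracket\tau,\nu\rrbracket$ we have $Y^1_{s-}<Y^2_{s-}$, so the whole $dB$-integrand of $Y^1-Y^2$ is nonnegative there. Conditioning on $\mathcal F_\tau$ then gives $Y^1_\tau-Y^2_\tau\ge \mathbb E[Y^1_\nu-Y^2_\nu\mid\mathcal F_\tau]\ge0$, which is a contradiction. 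Because $B$ is predictable and may jump, we must check that the decomposition \eqref{DM1} still holds with $dB$ in place of $dA$. This is immediate, since only the sign of the predictable finite-variation part and the martingale property of $\bar M$ are used. An alternative is to apply Meyer--Tanaka to $(Y^2-Y^1)^+$ as in the Remark following Proposition \ref{GBSDE-propo3}.

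\emph{Step 2 (the grid points).} At $\rho_{j-1}$ we have $Y^q_{\rho_{j-1}}=Y^q_{\rho_{j-1}+}+(Y^q_{\rho_{j-1}+}-\xi_{\rho_{j-1}})^-\mathds 1_{\{\rho_{j-1}\in\cup_i\llbracket\sigma_{q,i}\rrbracket\}}$. The right-continuous parts are RCLL on the interval, so Step 1 yields $Y^1_{\rho_{j-1}+}\ge Y^2_{\rho_{j-1}+}$. If $\rho_{j-1}$ is a grid point for both schemes, then $Y^q_{\rho_{j-1}}=Y^q_{\rho_{j-1}+}\vee\xi_{\rho_{j-1}}$, and the map $x\mapsto x\vee\xi$ is nondecreasing. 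If it is a grid point only for scheme 1, then $Y^1_{\rho_{j-1}}=Y^1_{\rho_{j-1}+}\vee\xi\ge Y^1_{\rho_{j-1}+}\ge Y^2_{\rho_{j-1}+}=Y^2_{\rho_{j-1}}$. Condition (iii) excludes the reverse situation, and that is precisely where it is needed. The case $\rho_{j-1}=0$ is handled with the convention of \eqref{Penelization equations local}. The induction starts at $\rho_{m+1}=T$, where $Y^1_T=\xi^1_T\ge\xi^2_T=Y^2_T$.

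The main obstacle is Step 1. Its difficulty is to make the argument of Proposition \ref{GBSDE-propo3} rigorous for generators depending on $y$ through $Y_{s-}$ with a jumping predictable integrator $B$. In particular, we must verify that the comparison of the left limits $Y^1_{s-}<Y^2_{s-}$ holds on the whole of $\rrbracket\tau,\nu\rrbracket$, including at predictable jump times of $B$ at which $\nu$ may occur. If the stopping-time argument becomes awkward, we would instead use Meyer--Tanaka for $(Y^2-Y^1)^+$ on each interval. This uses (ii) and (i) to sign $\mathds 1_{\{Y^2_{s-}>Y^1_{s-}\}}(\mathfrak f_2(s,Y^2_{s-})-\mathfrak f_1(s,Y^1_{s-}))\le0$, and the nonnegative local-time and jump terms can be dropped after taking expectations.
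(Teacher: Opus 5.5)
Your proof is correct, but it is organized differently from the paper's.

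\textbf{The paper's route.} The paper does not localize. It applies the Tanaka-type formula for regulated processes (Corollary A.5 of Klimsiak et al.) to $(Y^2-Y^1)^+$ on the whole of $[t,T]$. It signs the $dB$-term with the same two-bracket splitting you use, via (i) and (ii). It handles both right-jump sums in one step: (iii) reduces them to a sum over the $\sigma_{2,i}$ of $\mathds{1}_{\{Y^2_{\sigma}>Y^1_{\sigma}\}}\{(Y^2_{\sigma+}-\xi_\sigma)^- -(Y^1_{\sigma+}-\xi_\sigma)^-\}\le 0$. A conditional expectation then gives $(Y^2_t-Y^1_t)^+=0$ for each fixed $t$.

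\textbf{Your route.} You run a finite backward induction along the grid of scheme 1. On each piece you use only the RCLL comparison (the argument of Proposition \ref{GBSDE-propo3}, or Meyer--Tanaka for RCLL processes). You treat the grid points pathwise, through the monotonicity of $x\mapsto x\vee\xi_\sigma$.

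\textbf{What each buys.} The paper's argument is shorter and needs no induction or finiteness of the grid. However, it rests on an external Tanaka formula for non-right-continuous processes. Its last step, which upgrades ``for each $t$'' to ``for all $t$'' by invoking RCLL paths, would really need the optional section theorem, since $Y^q$ jumps to the right at the $\sigma_{q,i}$. Your argument is more elementary and shows exactly where (iii) enters: a scheme-2 grid point that is not a scheme-1 grid point could push $Y^2$ up by $\vee\xi$ without doing the same to $Y^1$. It also gives the inequality simultaneously for all $t$ directly. The price is that it uses the finiteness of the grid, which the statement provides.

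\textbf{Two details to tidy up.} First, on $\rrbracket\tau,\nu\rrbracket$ you only get $Y^1_{s-}\le Y^2_{s-}$, not the strict inequality; this still suffices by (ii). Second, run Step 1 on the RCLL process equal to $Y^q$ on $\rrbracket\rho_{j-1},\rho_j\rrbracket$ and to $Y^q_{\rho_{j-1}+}$ at $\rho_{j-1}$. This covers the case where the set defining $\tau^\varepsilon$ accumulates at $\rho_{j-1}$ from the right.
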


	\begin{proof}
		By applying \cite[Corollary A.5]{klimsiak2019reflected} to the process $(Y^2_t-Y^1_t)^+$ with dynamic that follows \eqref{used_dyn} using \eqref{penalization.comp}, we have
		\begin{equation}\label{oumabb}
			\begin{split}
				\left(Y^2_t-Y^1_t\right)^+
				&\leq
				\left(\xi^2_T-\xi^1_T\right)^+
				+\int_{t}^{T}
				\mathds{1}_{\{Y^2_{s-}>Y^1_{s-}\}}
				\left(\mathfrak{f}_2(s,Y^2_{s-})-\mathfrak{f}_1(s,Y^2_{s-})\right)dB_s \\
				&\quad
				+\int_{t}^{T}
				\mathds{1}_{\{Y^2_{s-}>Y^1_{s-}\}}
				\left(\mathfrak{f}_1(s,Y^2_{s-})-\mathfrak{f}_1(s,Y^1_{s-})\right)dB_s 
				-\int_{t}^{T}
				\mathds{1}_{\{Y^2_{s-}>Y^1_{s-}\}}\,d(M^2_s-M^1_s)\\
				&\quad +\sum_{t \leq \sigma_{2,i} < T}	\mathds{1}_{\{Y^2_{\sigma_{2,i}}>Y^1_{\sigma_{2,i}}\}} \left(Y^2_{\sigma_{2,i}+}-\xi_{\sigma_{2,i}}\right)^{-}- \sum_{t \leq \sigma_{1,i} < T}	\mathds{1}_{\{Y^2_{\sigma_{1,i}}>Y^1_{\sigma_{1,i}}\}} \left(Y^1_{\sigma_{1,i}+}-\xi_{\sigma_{1,i}}\right)^{-}.
			\end{split}
		\end{equation}
	Since	$\bigcup_i \llbracket \sigma_{2,i}  \rrbracket \subset \bigcup_i \llbracket \sigma_{1,i}  \rrbracket$, we have
		\begin{equation*}
			\begin{split}
			&\sum_{t \leq \sigma_{2,i} < T}	\mathds{1}_{\{Y^2_{\sigma_{2,i}}>Y^1_{\sigma_{2,i}}\}} \left(Y^2_{\sigma_{2,i}+}-\xi_{\sigma_{2,i}}\right)^{-}- \sum_{t \leq \sigma_{1,i} < T}	\mathds{1}_{\{Y^2_{\sigma_{1,i}}>Y^1_{\sigma_{1,i}}\}} \left(Y^1_{\sigma_{1,i}+}-\xi_{\sigma_{1,i}}\right)^{-}\\
			& \leq \sum_{t \leq \sigma_{2,i} < T}	\mathds{1}_{\{Y^2_{\sigma_{2,i}}>Y^1_{\sigma_{2,i}}\}}\left\{\left(Y^2_{\sigma_{2,i}+}-\xi_{\sigma_{2,i}}\right)^{-}-\left(Y^1_{\sigma_{2,i}+}-\xi_{\sigma_{2,i}}\right)^{-}\right\}\\
			& \leq 0.
			\end{split}
		\end{equation*}
		Plugging this into \eqref{oumabb} along with the assumption made on the data, then, taking the conditional expectation with respect to $\mathcal{F}_t$  of \eqref{oumabb} and using the martingale property of the stochastic integral on the right-hand side, we get
		$$
		\left(Y^2_t-Y^1_t\right)^+=0,
		\qquad \text{a.s., for each } t\in[0,T].
		$$
		Therefore,
		$$
		Y^2_t\leq Y^1_t,
		\qquad \text{a.s., for each } t\in[0,T].
		$$
		Finally, since $Y^1$ and $Y^2$ are RCLL processes, we conclude that\footnote{A consequence of Theorem 2 in \cite[p. 4]{Protter2004}.}
		$$
		Y^1_t\geq Y^2_t,
		\qquad \forall t\in[0,T],
		\quad \text{a.s.}
		$$
		This completes the proof
	\end{proof}

	After establishing the well-posedness of the penalization schemes \eqref{penalization}, which will be used to prove the existence of a solution to the GRBSDE \eqref{basic equation.indp}, and deriving a useful comparison principle for these schemes in Proposition \ref{comp_pena}, we are now in a position to prove the main existence and uniqueness result stated in Theorem \ref{main-result_indp}. The proof relies on several properties of the sequence $\left\{\left( Y^n,K^{n},M^n\right) \right\}_{n \geq 1}$ established below.\\

	\begin{proof}[Proof of Theorem \ref{main-result_indp}]
		The proof is organized into a sequence of auxiliary lemmas, each establishing a key property of the penalized sequence. Taken together, these results yield the desired existence and uniqueness conclusion. 
		
		We begin by deriving uniform a priori estimates.
		
	\begin{lemma}\label{lemma of unifrom estimation}
		There exists a constant $\mathfrak{C}_{\beta,\mathfrak{C}_A}>0$ independent of $n$ such that for all $\beta >0$
		\begin{equation*}
			\begin{split}
				&\sup_{n \geq 1}\left\lbrace  \mathbb{E}\left[ \esssup_{\tau \in \mathcal{T}_{[0,T]}} \mathcal{E}(\beta A)_{\tau} \left| Y^n_{\tau} \right|^2\right]+ \mathbb{E}\left[  \int_{0}^{T} \mathcal{E}(\beta A)_{s} \left| Y^n_{s-} \right|^2 dA_s\right] +\mathbb{E}\left[  \int_{0}^{T}\mathcal{E}(\beta A)_{s} d \left[M^n\right]_s \right]+\mathbb{E}\left[ \left| K^n_T \right|^2  \right]\right\rbrace \\
				& \leq \mathfrak{C}_{\beta,\mathfrak{C}_A} \left( \mathbb{E}\left[  \esssup_{\tau \in \mathcal{T}_{[0,T]}} \left| \mathcal{E}(\beta A)_{\tau} \xi_{\tau} \right|^2\right] +  \mathbb{E}\left[  \int_{0}^{T} \mathcal{E}(\beta A)_{s} \left| \mathfrak{f}(s) \right|^2 dA_s\right] 
				\right)  . 
			\end{split}
		\end{equation*}
	\end{lemma}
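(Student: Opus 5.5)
The plan is to combine an Itô-type expansion of $\mathcal{E}(\beta A)|Y^n|^2$ with a separate estimate on $K^n_T$ read off from the equation. The key point is that every constant must be uniform in $n$. That is exactly what Lemma \ref{result needed for majoration} provides, since it replaces the role of the Skorokhod condition.

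\emph{Step 1: Itô expansion.} I apply the Itô formula for regulated semimartingales (Gal'chuk--Lenglart, as in Corollary A.5 of \cite{klimsiak2019reflected}) to $\mathcal{E}(\beta A)_t|Y^n_t|^2$ on $[t,T]$, using the dynamics \eqref{Short dynamic of Yn}--\eqref{K-n}. Exactly as in \eqref{Ito1}--\eqref{Ito-r1} and in the proof of Proposition \ref{GRBSDE-propo}, this gives
\begin{align*}
&\mathcal{E}(\beta A)_t|Y^n_t|^2+\frac{\beta}{1+\beta\mathfrak{C}_A}\int_t^T\mathcal{E}(\beta A)_s|Y^n_{s-}|^2dA_s+\int_t^T\mathcal{E}(\beta A)_s\,d[M^n]_s\\
&\leq \mathcal{E}(\beta A)_T|\xi_T|^2+2\int_t^T\mathcal{E}(\beta A)_sY^n_{s-}\mathfrak{f}(s)dA_s+2\int_t^T\mathcal{E}(\beta A)_sY^n_{s-}dK^{n,\ast}_s+2\sum_{t\leq s<T}\mathcal{E}(\beta A)_sY^n_s\Delta_+K^n_s\\
&\quad-2\int_t^T\mathcal{E}(\beta A)_sY^n_{s-}dM^n_s+2\sum_{t<s\leq T}\mathcal{E}(\beta A)_s\mathfrak{f}(s)\Delta_-A_s\Delta_-M^n_s.
\end{align*}
The negative term $-|\Delta_+K^n|^2$ produced by the right jumps is dropped. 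Because $K^{n,\ast}$ is absolutely continuous in $ds$, it does not jump together with $A$.

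\emph{Step 2: the reflection terms.} I write $Y^n=(Y^n-\xi)+\xi$ in both reflection terms. By Lemma \ref{result needed for majoration}, the part involving $Y^n-\xi$ is $\leq 0$. What remains is bounded by
\[
2\sup_{s}\bigl(\mathcal{E}(\beta A)_s|\xi_s|\bigr)\,(K^n_T-K^n_t)\leq \frac1\alpha\,\esssup_\tau\mathcal{E}(\beta A)_\tau^2|\xi_\tau|^2+\alpha|K^n_T-K^n_t|^2 .
\]
Here I use $\mathcal{E}(\beta A)_{s-}\leq\mathcal{E}(\beta A)_s$, and Remark \ref{rmq2} to pass from left limits to the essential supremum. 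Next I bound $K^n_T-K^n_t$ from the equation itself:
\[
K^n_T-K^n_t=Y^n_t-\xi_T-\int_t^T\mathfrak{f}\,dA+M^n_T-M^n_t .
\]
Together with \eqref{code}, \eqref{bounded_exp_A} and the Cauchy--Schwarz inequality, this gives
\[
\mathbb{E}|K^n_T-K^n_t|^2\leq \mathfrak{C}\Bigl(\mathbb{E}\bigl[\mathcal{E}(\beta A)_t|Y^n_t|^2\bigr]+\mathbb{E}\bigl[\mathcal{E}(\beta A)_T|\xi_T|^2\bigr]+\|\mathfrak{f}\|^2_{\mathcal{S}^{2,A}_\beta}+\mathbb{E}\int_t^T\mathcal{E}(\beta A)\,d[M^n]\Bigr),
\]
with $\mathfrak{C}$ depending only on $\beta,\mathfrak{C}_A$ because $1\le\mathcal{E}(\beta A)\le e^{\beta\mathfrak{C}_A}$. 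The $\mathfrak{f}\,dA$ cross term is absorbed by Young's inequality as in \eqref{Ito-r2}. I then take expectations, using Lemmas \ref{ouma} and \ref{ouma1} to kill the martingale and jump cross terms, and choose $\alpha\mathfrak{C}\leq 1/2$ to absorb the $Y^n$ and $[M^n]$ contributions. This yields a bound on $\sup_t\mathbb{E}[\mathcal{E}(\beta A)_t|Y^n_t|^2]$, on $\mathbb{E}\int\mathcal{E}(\beta A)|Y^n_-|^2dA$, and on $\mathbb{E}\int\mathcal{E}(\beta A)d[M^n]$, all in terms of the data and uniform in $n$. The $K^n_T$ bound then follows from the $t=0$ identity above.

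\emph{Step 3: the supremum.} I return to the pathwise inequality and take $\sup_t$, treating the reflection terms as in Step 2 with $K^n_T$ in place of $K^n_T-K^n_t$. I apply \eqref{BDG-res} with small $\varepsilon$ to the stochastic integral. The jump sum is handled via the Kunita--Watanabe inequality as in \eqref{driver_part_Tonelli}. Substituting the Step 2 bounds gives the $\esssup$ term, since $Y^n$ is regulated and optional and Remark \ref{rmq2} links the sup to the $\esssup$.

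The main obstacle is the absorption in Step 2. The coefficient in front of $\mathbb{E}|K^n|^2$ must be made small independently of $n$, and the estimate for $K^n$ in turn reintroduces $Y^n$ and $M^n$. I would resolve this circularity by the choice of $\alpha$ described above. This works only because $\mathcal{E}(\beta A)$ is bounded above and below by (H3), so all the weights are comparable.
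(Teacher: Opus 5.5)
Your proposal is correct and follows essentially the same route as the paper. The steps match: It\^o's formula for $\mathcal{E}(\beta A)|Y^n|^2$; reduction of the reflection terms to $2\,\esssup_\tau \mathcal{E}(\beta A)_\tau|\xi_\tau|\,(K^n_T-K^n_t)$; Young's inequality; an estimate of $K^n_T$ read off from the equation and absorbed by choosing the Young parameter; and finally the Burkholder--Davis--Gundy bound of Lemma \ref{ouma} for the supremum.

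The differences are cosmetic. The paper gets the sign of the reflection terms directly from $a(a-b)^-\le b^+(a-b)^-$ and from $Y^n_{\sigma_{n,i}}=Y^n_{\sigma_{n,i}+}\vee\xi_{\sigma_{n,i}}$. This is exactly the termwise content of Lemma \ref{result needed for majoration}, so the weighted version you need follows from that lemma's proof rather than from its unweighted statement. The paper also does the absorption at $t=0$, whereas you do it at every $t$, which keeps $\mathbb{E}[\mathcal{E}(\beta A)_t|Y^n_t|^2]$ on the left-hand side and so handles the $|Y^n_0|^2$ term in the $K^n$ estimate more cleanly.
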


	\begin{proof}
			Let $t \in [0,T]$ be fixed. By applying Corollary \ref{Application of Ito formula} on the time interval $[t,T]$ to the dynamics of the process $Y^n$ given by equation \eqref{Short dynamic of Yn} with right-continuous part
		$
		Y^{n,\ast}_t=Y_0-\int_{0}^{t}\mathfrak{f}(s)dA_s-K^{n,\ast}_t+\int_{0}^{t} dM^n_s
		$ and $\Delta_{+} Y^n = -\Delta_{+} K^{n}$, we can derive 
		\begin{equation}\label{basic Itos formula}
			\begin{split}
				&\mathcal{E}(\beta A)_t \left| Y^n_t \right|^2+ \int_{t}^{T} \dfrac{\beta}{1+\beta \Delta_- A_s} \mathcal{E}(\beta A)_{s} \left| Y^n_{s-} \right|^2 dA_s+\int_{t}^{T} \mathcal{E}(\beta A)_{s} d \left[M^n\right]_s \\
				&   =\mathcal{E}(\beta A)_T \left| \xi_T \right|^2+2\int_{t}^{T}  \mathcal{E}(\beta A)_{s} Y^n_{s-} \mathfrak{f}(s) d A_s+2\int_{t}^{T}  \mathcal{E}(\beta A)_{s} Y^n_{s-}dK^{n,\ast}_s\\
				&\quad-2\int_{t}^{T}  \mathcal{E}(\beta A)_{s} Y^n_{s-}  d M^n_s -\sum_{t < s \leq T}|\mathfrak{f}(s)|^2 |\Delta_- A_s|^2+2\int_{t}^{T}  \mathfrak{f}(s)\Delta_- A_s dM^n_s\\
				&\quad -\sum_{t \leq s < T} \mathcal{E}(\beta A)_{s} \left(\left| \Delta_{+} Y^n_{s}\right|^2-2Y^n_{s}\Delta_{+} K^n_s \right)\\
				& \leq \mathcal{E}(\beta A)_T \left| \xi_T \right|^2+2\int_{t}^{T}  \mathcal{E}(\beta A)_{s} Y^n_{s-} \mathfrak{f}(s) d A_s+2\int_{t}^{T}  \mathcal{E}(\beta A)_{s} Y^n_{s-}dK^{n,\ast}_s\\
				&\quad +2\sum_{t \leq s < T} \mathcal{E}(\beta A)_{s} Y^n_{s}\Delta_{+} K^n_s -2\int_{t}^{T}  \mathcal{E}(\beta A)_{s} Y^n_{s-}  d M^n_s.
			\end{split}
		\end{equation}
	For the generator term, applying the inequality $2ab\leq \epsilon a^2+\frac{1}{\epsilon}b^2$, $
	 \forall (a,b)\in\mathbb{R}^2,$ with
	$\epsilon=\frac{\beta}{2\left(1+\beta\mathfrak{C}_A\right)},$ we obtain
	\begin{equation}\label{hssab0}
		\begin{split}
			2\int_{t}^{T}  \mathcal{E}(\beta A)_{s} Y^n_{s-} \mathfrak{f}(s) d A_s& \leq 	\frac{\beta}{2 \left(1+ \beta \mathfrak{C}_A\right) } \int_{t}^{T}  \mathcal{E}(\beta A)_{s} \left| Y^n_{s-} \right|^2 dA_s+\dfrac{2\left(1+ \beta \mathfrak{C}_A \right)}{\beta}\int_{t}^{T}  \mathcal{E}(\beta A)_{s} \left| \mathfrak{f}(s)\right|^2 dA_s.
		\end{split}
	\end{equation}
	Next, by applying the standard inequalities $a(a-b)^- \leq b^+ (a-b)^-$ and $2 ab \leq 2\epsilon a^2 +\frac{1}{2\epsilon} b^2$, $\forall \epsilon>0$, $\forall (a,b) \in \mathbb{R}^2$, we obtain
	\begin{equation}\label{hssab1}
		\begin{split}
			2\int_{t}^{T}  \mathcal{E}(\beta A)_{s} Y^n_{s-}dK^{n,\ast}_s&=2n\int_{t}^{T}\mathcal{E}(\beta A)_{s} Y^n_{s-} \left(Y^n_{s-}-\xi_{s-}\right)^{-}ds\\
			& \leq 2n\int_{t}^{T}\mathcal{E}(\beta A)_{s} \xi^+_{s-} \left(Y^n_{s-}-\xi_{s-}\right)^{-}ds\\
			& \leq 2\esssup_{\tau \in \mathcal{T}_{0,T}} \mathcal{E}(\beta A)_\tau  |\xi_\tau| \left(K^n_T-K^n_t\right)\\
			&\leq  2\epsilon \esssup_{\tau \in \mathcal{T}_{0,T}} |\mathcal{E}(\beta A)_\tau  \xi_\tau|^2 +\frac{1}{2\epsilon}\left| K^n_T -K^n_t\right|^2.  
		\end{split}
	\end{equation}
	Similarly, using the dynamics of the right jumps of the process $Y^n$ given by \eqref{penalization}, which imply that $Y^n_{\sigma_{n,i}}=Y^n_{\sigma_{n,i}+} \vee \xi_{\sigma_{n,i}}$, we obtain
	\begin{equation}\label{hssab2}
		\begin{split}
			2\sum_{t \leq s < T} \mathcal{E}(\beta A)_{s} Y^n_{s}\Delta_{+} K^n_s &=2\sum_{t \leq \sigma_{n,i} < T} \mathcal{E}(\beta A)_{\sigma_{n,i}} Y^n_{\sigma_{n,i}} \left(Y^n_{\sigma_{n,i}+}-\xi_{\sigma_{n,i}}\right)^{-}\\
			&=2\sum_{t \leq \sigma_{n,i} < T} \mathcal{E}(\beta A)_{\sigma_{n,i}} Y^n_{\sigma_{n,i}+} \vee \xi_{\sigma_{n,i}} \left(Y^n_{\sigma_{n,i}+}-\xi_{\sigma_{n,i}}\right)^{-}\\
			&=2\sum_{t \leq \sigma_{n,i} < T} \mathcal{E}(\beta A)_{\sigma_{n,i}}  \xi_{\sigma_{n,i}} \left(Y^n_{\sigma_{n,i}+}-\xi_{\sigma_{n,i}}\right)^{-}\\
			&\leq  2\epsilon \esssup_{\tau \in \mathcal{T}_{0,T}} |\mathcal{E}(\beta A)_\tau  \xi_\tau|^2 +\frac{1}{2\epsilon}\left| K^n_T -K^n_t\right|^2. 
		\end{split}
	\end{equation}
	Adding \eqref{hssab1} and \eqref{hssab2}, we obtain
	\begin{equation}\label{hssab3}
		2\int_{t}^{T}  \mathcal{E}(\beta A)_{s} Y^n_{s-}dK^{n,\ast}_s+2\sum_{t \leq s < T} \mathcal{E}(\beta A)_{s} Y^n_{s}\Delta_{+} K^n_s \leq 4\epsilon \esssup_{\tau \in \mathcal{T}_{0,T}} |\mathcal{E}(\beta A)_\tau  \xi_\tau|^2 +\frac{1}{\epsilon}\left| K^n_T -K^n_t\right|^2. 
	\end{equation}
	Using \eqref{essential condition-r1} and substituting \eqref{hssab0} and \eqref{hssab3} into \eqref{basic Itos formula}, we obtain
	\begin{equation}
		\begin{split}
			&\mathcal{E}(\beta A)_t \left| Y^n_t \right|^2+\frac{\beta}{2 \left(1+ \beta \mathfrak{C}_A\right) } \int_{t}^{T}  \mathcal{E}(\beta A)_{s} \left| Y^n_{s-} \right|^2 dA_s+\int_{t}^{T} \mathcal{E}(\beta A)_{s} d \left[M^n\right]_s \\
			& \leq \mathcal{E}(\beta A)_T \left| \xi_T \right|^2+\dfrac{2\left(1+ \beta \mathfrak{C}_A \right)}{\beta}\int_{t}^{T}  \mathcal{E}(\beta A)_{s} \left| \mathfrak{f}(s)\right|^2 dA_s+4\epsilon \esssup_{\tau \in \mathcal{T}_{0,T}} |\mathcal{E}(\beta A)_\tau  \xi_\tau|^2\\
			&\quad  +\frac{1}{\epsilon}\left| K^n_T -K^n_t\right|^2 -2\int_{t}^{T}  \mathcal{E}(\beta A)_{s} Y^n_{s-}  d M^n_s
		\end{split}
		\label{basic Itos formula.1}
	\end{equation}
	Taking expectations on both sides of \eqref{basic Itos formula.1} with $t = 0$, and applying Lemmas \ref{ouma} and \ref{ouma1}, we obtain
		\begin{equation}
		\begin{split}
			&\frac{\beta}{2 \left(1+ \beta \mathfrak{C}_A\right) } \mathbb{E}\left[  \int_{0}^{T} \mathcal{E}(\beta A)_{s} \left| Y^n_{s-} \right|^2 dA_s\right] +\mathbb{E}\left[  \int_{0}^{T}\mathcal{E}(\beta A)_{s} d \left[M^n\right]_s \right] \\
			&  \leq \dfrac{2\left(1+ \beta \mathfrak{C}_A \right)}{\beta}  \mathbb{E}\left[  \int_{0}^{T} \mathcal{E}(\beta A)_{s} \left| \mathfrak{f}(s) \right|^2 dA_s\right] 
			+(4\epsilon+1)\mathbb{E}\left[  \esssup_{\tau \in \mathcal{T}_{[0,T]}} \left| \mathcal{E}(\beta A)_{\tau} \xi_{\tau} \right|^2\right] +\dfrac{1}{\epsilon}\mathbb{E}\left[ \left| K^n_T \right|^2\right] .
		\end{split}
		\label{The view of this}
	\end{equation}
	Writing the dynamics of the state process $Y^n$ in \eqref{Short dynamic of Yn} in forward form, then squaring both sides, applying Hölder's inequality together with \eqref{code} and Proposition 4.50 in \cite[p. 53 ]{JacodShiryaev2013}, and finally taking expectations, we obtain
	\begin{equation}
		\begin{split}
			\mathbb{E}\left[ \left| K^n_T \right|^2\right]  & \leq 4 \left(\mathbb{E}\left[\left|Y^n_0 \right|^2\right] +\mathbb{E}\left[ \mathcal{E}(\beta A)_{T}\left|\xi_T \right|^2\right] +\dfrac{1}{\beta}  \mathbb{E} \left[ \int_{0}^{T} \mathcal{E}(\beta A)_{s} \left| \mathfrak{f}(s) \right|^2 dA_s\right] +\mathbb{E}\left[  \int_{0}^{T}\mathcal{E}(\beta A)_{s} d\left[M^n\right]_s\right] \right)
		\end{split}
		\label{Kn est}
	\end{equation}
	Subsequently, choosing $ \epsilon>4$ and substituting this choice into \eqref{The view of this}, we deduce the existence of a constant $\mathfrak{C}_{\beta,\mathfrak{C}_A} > 0$, independent of $n$ and depending only on $\beta$ and $\mathfrak{C}_A$, such that
		\begin{equation}	\label{fr3to lna kna f chofo kidyr}
		\begin{split}
			& \mathbb{E}\left[  \int_{0}^{T} \mathcal{E}(\beta A)_{s} \left| Y^n_{s-} \right|^2 dA_s\right] +\mathbb{E}\left[  \int_{0}^{T}\mathcal{E}(\beta A)_{s} d \left[M^n\right]_s \right] \\
			&  \leq \mathfrak{C}_{\beta,\mathfrak{C}_A} \left( \mathbb{E}\left[  \esssup_{\tau \in \mathcal{T}_{[0,T]}} \left| \mathcal{E}(\beta A)_{\tau} \xi_{\tau} \right|^2\right] +  \mathbb{E}\left[  \int_{0}^{T} \mathcal{E}(\beta A)_{s} \left| \mathfrak{f}(s) \right|^2 dA_s\right] 
			\right)  .
		\end{split}
	\end{equation}
	Therefore, combining this estimate with \eqref{Kn est}, we obtain
	\begin{equation}
		\begin{split}
			&\mathbb{E}\left[  \int_{0}^{T} \mathcal{E}(\beta A)_{s} \left| Y^n_{s-} \right|^2 dA_s\right] +\mathbb{E}\left[  \int_{0}^{T}\mathcal{E}(\beta A)_{s} d \left[M^n\right]_s \right]+\mathbb{E}\left[ \left| K^n_T \right|^2  \right] \\
			&  \leq \mathfrak{C}_{\beta,\mathfrak{C}_A} \left( \mathbb{E}\left[  \esssup_{\tau \in \mathcal{T}_{[0,T]}} \left| \mathcal{E}(\beta A)_{\tau} \xi_{\tau} \right|^2\right] +  \mathbb{E}\left[  \int_{0}^{T} \mathcal{E}(\beta A)_{s} \left| \mathfrak{f}(s) \right|^2 dA_s\right] 
			\right)  .
		\end{split}
		\label{First unif est}
	\end{equation}
It remains to establish a uniform estimate for the sequence of random variables $\left\{\esssup_{\tau \in \mathcal{T}_{[0,T]} }\mathcal{E}(\beta A)_{\tau}\left|Y^n_{\tau} \right|^2 \right\}_{n \geq 1}$. To this end, we return to \eqref{basic Itos formula}, take the essential supremum on both sides, and then apply \eqref{hssab3}. Using arguments similar to those employed in the proof of Proposition \ref{GBSDE-propo1}, together with \eqref{First unif est}, we obtain
\begin{equation*}
	\begin{split}
		\mathbb{E}\left[ \esssup_{\tau \in \mathcal{T}_{[0,T]}} \mathcal{E}(\beta A)_{\tau} \left| Y^n_{\tau} \right|^2\right] 
		&  \leq \mathfrak{C}_{\beta,\mathfrak{C}_A} \left( \mathbb{E}\left[  \esssup_{\tau \in \mathcal{T}_{[0,T]}} \left| \mathcal{E}(\beta A)_{\tau} \xi_{\tau} \right|^2\right] +  \mathbb{E}\left[  \int_{0}^{T} \mathcal{E}(\beta A)_{s} \left| \mathfrak{f}(s) \right|^2 dA_s\right] 
		\right)  . 
	\end{split}
\end{equation*}
This completes the proof of Lemma \ref{lemma of unifrom estimation}.\\
	\end{proof}

	We next establish the monotonicity of the penalized sequence and identify its pointwise limit.
	
	\begin{lemma}\label{increasing}
		There exists an $\mathbb{F}$-optional process $Y:=(Y_t)_{t \leq T}$ such that $Y^n \nearrow Y$ on $[0,T]$ and $\mathcal{E}(\beta A) Y$ is uniformly square integrable.
	\end{lemma}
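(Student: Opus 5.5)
The plan is to get monotonicity from the comparison principle of Proposition \ref{comp_pena}, and then to define $Y$ as the pointwise limit and control it through Lemma \ref{lemma of unifrom estimation} and Fatou's lemma.

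\emph{Step 1: $Y^n\leq Y^{n+1}$.} I would apply Proposition \ref{comp_pena} with $q=1$ corresponding to index $n+1$ and $q=2$ to index $n$, both written in the $dB$-form with generators $\mathfrak{f}_{n+1}$ and $\mathfrak{f}_n$. The two schemes share the terminal value $\xi_T$. The hypotheses are checked as follows:
\begin{itemize}
\item[(i)] $\mathfrak{f}_{n+1}(t,y)-\mathfrak{f}_n(t,y)=b_t(y-\xi_{t-})^-\geq 0$.
\item[(ii)] $y\mapsto a_t\mathfrak{f}(t)+(n+1)b_t(y-\xi_{t-})^-$ is nonincreasing, since $b_t\geq0$.
\item[(iii)] Every $\sigma_{n,i}$ appears as $\sigma_{n+1,j_{n+1}+i}=\sigma_{n+1,j_{n+1}}\vee\sigma_{n,i}$ in the next row. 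This is the delicate point, because it only covers $\sigma_{n,i}$ on $\{\sigma_{n,i}\geq\sigma_{n+1,j_{n+1}}\}$. On the complementary event, $\sigma_{n,i}$ is a time where $\Delta_+\xi<-1/n<-1/(n+1)$ lying before $\sigma_{n+1,j_{n+1}}$. I would therefore argue that it is already among $\sigma_{n+1,1},\dots,\sigma_{n+1,j_{n+1}}$, because these enumerate successively all right-jump times of size $>1/(n+1)$ up to $\sigma_{n+1,j_{n+1}}$.
\end{itemize}
Granting (iii), Proposition \ref{comp_pena} gives $Y^n_t\leq Y^{n+1}_t$ for all $t$, a.s.

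\emph{Step 2: the limit.} I would set $Y_t:=\lim_n Y^n_t=\sup_n Y^n_t$, with values in $(-\infty,+\infty]$. As a pointwise supremum of optional processes, $Y$ is optional.

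To show that it is finite and that $\mathcal{E}(\beta A)Y$ is uniformly square integrable, I would use the bound
\[
\mathcal{E}(\beta A)_\tau|Y^n_\tau|^2\leq \mathcal{E}(\beta A)_\tau\bigl(|Y^1_\tau|^2+|Y^n_\tau|^2\bigr),
\]
which holds because $Y^1\leq Y^n\leq Y$. By monotone convergence, or Fatou's lemma applied along $n$, for each stopping time $\tau$ I would get
\[
\mathbb{E}\bigl[\mathcal{E}(\beta A)_\tau|Y_\tau|^2\bigr]\leq\liminf_n \mathbb{E}\bigl[\mathcal{E}(\beta A)_\tau|Y^n_\tau|^2\bigr],
\]
and the right-hand side is bounded uniformly in $n$ and $\tau$ by Lemma \ref{lemma of unifrom estimation}.

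For the stronger statement with the essential supremum inside the expectation, I would use
\[
\sup_t\mathcal{E}(\beta A)_t|Y_t|^2\leq\sup_t\mathcal{E}(\beta A)_t|Y^1_t|^2+\sup_n\sup_t\mathcal{E}(\beta A)_t|Y^n_t|^2,
\]
where the supremum over $n$ is monotone in $n$ on the set $\{Y^n\geq0\}$ and is dominated by $|Y^1|$ on its complement. Monotone convergence then makes the left side integrable, with expectation bounded by the uniform constant $\mathfrak{C}_{\beta,\mathfrak{C}_A}(\cdots)$ of Lemma \ref{lemma of unifrom estimation}. In particular $Y<\infty$ a.s., and the family $\{\mathcal{E}(\beta A)_\tau Y_\tau\}_\tau$ is dominated by a single square-integrable variable, which gives the uniform square integrability.

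The main obstacle I expect is hypothesis (iii) in Step 1: the inclusion of the graphs of the jump times between consecutive rows, given how the array is defined with the truncation $j_{n+1}$. If the direct inclusion fails on a null-probability-vanishing set, I would modify $\sigma_{n+1,\cdot}$ to be the ordered union of the new times and $\{\sigma_{n,i}\}$. This keeps well-posedness, and hence monotonicity, intact.
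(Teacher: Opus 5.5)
Your proposal is correct and follows essentially the same route as the paper: monotonicity comes from Proposition \ref{comp_pena}, after checking $\mathfrak{f}_{n+1}\ge\mathfrak{f}_n$, the nonincreasing dependence on $y$, and the inclusion of the graphs of row $n$ into those of row $n+1$; then Fatou's lemma is combined with the uniform bound of Lemma \ref{lemma of unifrom estimation}. Two side remarks. Your verification of (iii) is more careful than the paper's one-line justification, since a regulated path has only finitely many right jumps below $-1/(n+1)$, so the first $j_{n+1}$ entries of row $n+1$ list all of them up to $\sigma_{n+1,j_{n+1}}$; this makes the proposed fallback modification of the array unnecessary. Also, the positive/negative splitting in Step 2 can be replaced by the pathwise bound $\sup_t\mathcal{E}(\beta A)_t|Y_t|^2\le\liminf_n\sup_t\mathcal{E}(\beta A)_t|Y^n_t|^2$ followed directly by Fatou, which is what the paper does.
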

\begin{proof}
	Note that, since $\mathfrak{f}_{n+1}(s,y) \geq \mathfrak{f}_{n}(s,y)$ for any $s \in [0,T]$, a.s., and for all $y \in \mathbb{R}$, and by the definition of the families $\left\{\sigma_{n,i}\right\}$, due to the implication $\Delta_{+}\xi_t <-\frac{1}{n} $ implies that $\Delta_{+}\xi_t <-\frac{1}{n+1} $, we derive $\bigcup_i \llbracket \sigma_{n,i} \rrbracket \subset \bigcup_i \llbracket \sigma_{n+1,i} \rrbracket $ for each $n \geq 1$. Moreover, since $y \mapsto \mathfrak{f}_n(t,y)$ is nonincreasing for each $n \geq 1$, Proposition \ref{comp_pena} applies and yields $Y^{n+1}_t \geq Y^{n}_t$ for all $t \in [0,T]$ and $n \geq 1$. Consequently, there exists an $\mathbb{F}$-optional process $(Y_t)_{t \leq T}$ such that $Y^n_t \nearrow Y_t$, $\forall t \in [0,T]$ a.s., as $n \rightarrow +\infty$. Finally, applying Fatou's lemma together with the uniform estimate established in Lemma \ref{lemma of unifrom estimation}, we obtain
	\begin{equation*}
		\begin{split}
			\mathbb{E}\left[ \esssup_{\tau \in \mathcal{T}_{[0,T]}} \mathcal{E}(\beta A)_\tau \left| Y_{\tau}\right|^2 \right] 
			&\leq \liminf_{n\rightarrow +\infty} \mathbb{E}\left[ \esssup_{\tau \in \mathcal{T}_{[0,T]}} \mathcal{E}(\beta A)_\tau \left| Y^{n}_{\tau}\right|^2  \right]  \\
			& \leq \mathfrak{C}_{\beta,\mathfrak{C}_A} \left( \mathbb{E}\left[  \esssup_{\tau \in \mathcal{T}_{[0,T]}} \left| \mathcal{E}(\beta A)_{\tau} \xi_{\tau} \right|^2\right] +  \mathbb{E}\left[  \int_{0}^{T} \mathcal{E}(\beta A)_{s} \left| \mathfrak{f}(s) \right|^2 dA_s\right] 
			\right)  . 
		\end{split}
	\end{equation*}
\end{proof}



We now identify the limiting martingale and finite-variation components and establish that the limit process satisfies the GBSDE dynamics.

\begin{lemma}\label{regulated}
	There exists a pair of processes $(M,K) \in \mathcal{M}^2_\beta \times \mathcal{K}^2$  such that the triplet $(Y,M,K)$ is a solution of the GBSDE \eqref{basic equation.indp}-(i), where $Y$ and $K$ has regulated trajectories on $[0,T]$.
\end{lemma}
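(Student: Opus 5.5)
The plan is to obtain $M$ as a weak limit of the penalized martingales, to define $K$ directly from the equation, and then to show that the limit processes inherit regulated paths.

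\emph{Step 1: the martingale component.} By Lemma \ref{lemma of unifrom estimation}, the sequence $(M^n)_n$ is bounded in $\mathcal{M}^2_\beta$; since $\mathcal{E}(\beta A)\ge 1$, it is also bounded in $\mathcal{M}^2$. Likewise, $(K^n_T)_n$ is bounded in $\mathbb{L}^2$. Because $\mathcal{M}^2$ is a Hilbert space (isometric to $\mathbb{L}^2(\mathcal{F}_T)$ via $M\mapsto M_T$), we can extract a subsequence with $M^n_T\rightharpoonup M_T$ weakly in $\mathbb{L}^2$. We then set $M_t:=\mathbb{E}[M_T\mid\mathcal{F}_t]$, taking the RCLL version. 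For every stopping time $\tau$, this gives $M^n_\tau\rightharpoonup M_\tau$ weakly in $\mathbb{L}^2$. Weak lower semicontinuity of the norm $\mathbb{E}[\int_0^T\mathcal{E}(\beta A)_s\,d[\cdot]_s]$ shows $M\in\mathcal{M}^2_\beta$; this norm is convex and equivalent to the $\mathcal{M}^2$ norm by \eqref{bounded_exp_A}.

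\emph{Step 2: definition of $K$.} We set
\[
K_t:=Y_0-Y_t-\int_0^t\mathfrak{f}(s)\,dA_s+M_t,
\]
so that \eqref{basic equation.indp}-(i) holds by construction once $Y_T=\xi_T$. The latter is immediate, since $Y^n_T=\xi_T$ for all $n$. From \eqref{Short dynamic of Yn} we have $K^n_t=Y^n_0-Y^n_t-\int_0^t\mathfrak{f}\,dA+M^n_t$. Since $Y^n_\tau\uparrow Y_\tau$ in $\mathbb{L}^2$ by monotone convergence and the bound in Lemma \ref{increasing}, we obtain $K^n_\tau\rightharpoonup K_\tau$ weakly in $\mathbb{L}^2$ for every $\tau\in\mathcal{T}_{[0,T]}$. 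For stopping times $\sigma\le\tau$ and $B\in\mathcal{F}$, we get $\mathbb{E}[\mathds{1}_B(K_\tau-K_\sigma)]=\lim\mathbb{E}[\mathds{1}_B(K^n_\tau-K^n_\sigma)]\ge 0$, hence $K_\tau\ge K_\sigma$ a.s. Since $K$ is optional, the optional section theorem then yields that $K$ is nondecreasing up to evanescence. Moreover, $\mathbb{E}|K_T|^2\le\liminf\mathbb{E}|K^n_T|^2<\infty$, so $K\in\mathcal{K}^2$.

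\emph{Step 3: regulated trajectories.} A nondecreasing process automatically has left and right limits everywhere, so $K$ is regulated. The identity $Y=Y_0-\int_0^\cdot\mathfrak{f}\,dA+M-K$ then exhibits $Y$ as an RCLL process minus a regulated one, hence regulated. In addition, $Y$ is optional as the pointwise limit of the optional processes $Y^n$, and it lies in $\mathcal{S}^2_\beta$ by Lemma \ref{increasing}.

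The main obstacle is in Step 2: turning weak convergence at each fixed stopping time into a pathwise monotone, optional version of $K$ that is defined outside a single null set. We will handle this by defining $K$ through the formula above, which is optional because $Y$ is optional and $M$, $\int\mathfrak{f}\,dA$ are RCLL. Monotonicity then only has to be checked along stopping times, which is exactly what the optional section theorem allows; a separate pathwise limit of $K^n$ is never needed. As a fallback, one could instead use Mazur-type convex combinations of $K^n$ to obtain strong convergence.
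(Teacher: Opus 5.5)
Your construction of $M$ and $K$ is the paper's: a weak $\mathbb{L}^2$-limit of $M^n_T$, the RCLL martingale it generates, weak convergence $M^n_\tau\rightharpoonup M_\tau$ at every stopping time, and $K$ defined through the equation so that $K^n_\tau\rightharpoonup K_\tau$. You differ in how monotonicity and regularity are justified.

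\textbf{The paper's route.} It simply asserts that the weak limit $K$ is increasing and predictable (``equal to its dual predictable projection''). It then cites a monotone limit theorem for regulated processes (Lemma 1.4 in \cite{Marzougue2021}) to conclude that $Y$ and $K$ have regulated paths.

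\textbf{Your route.} You prove that $K$ is nondecreasing along stopping times and upgrade this to pathwise monotonicity. Regularity then comes for free: $K$ is monotone, and $Y=Y_0-\int_0^\cdot\mathfrak{f}\,dA+M-K$. This is more self-contained and avoids the black-box limit theorem. It also does not need predictability of $K$, which the lemma does not require. The paper's route is shorter only because it cites.

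\textbf{The step to write out.} The optional section theorem compares two optional processes at a common stopping time. So it does not by itself turn ``$K_\sigma\le K_\tau$ for $\sigma\le\tau$'' into pathwise monotonicity, which is a two-time statement. A short fix:
\begin{enumerate}
\item For each rational $q$, compare $K\mathds{1}_{[q,T]}$ with the optional process $K_q\mathds{1}_{[q,T]}$, using $K_{\tau\vee q}\ge K_q$. This gives, a.s., $K_t\ge K_q$ for all rational $q$ and all $t\ge q$.
\item Let $L_t:=\inf_{q\in\mathbb{Q},\,q>t}K_q$, with $L_T:=K_T$. This process is adapted, right-continuous and nondecreasing.
\item Check $K_\tau\le L_\tau$ a.s.\ for every stopping time $\tau$, using dyadic approximations $\tau_m\downarrow\tau$ and monotonicity of $K$ on the rationals.
\item Apply the section theorem again to get $K\le L$ up to evanescence. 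Together with the first step this gives $L_{t-}\le K_t\le L_t$ for all $t$, a.s. Hence $K_s\le L_s\le L_{t-}\le K_t$ for $s<t$.
\end{enumerate}
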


\begin{proof}
Using the uniform estimation provided in Lemma \ref{lemma of unifrom estimation}, and the Hilbert structure of $\mathcal{M}^2_{\beta}$, we have $\sup_{n \geq 1} \mathbb{E}\left[ \left[ M^n \right]_{T}  \right] \leq \mathfrak{C}_{\beta, \mathfrak{C}_A}$, which implies $\sup_{n \geq 1} \mathbb{E}\left[ \left| M^n_{{T}} \right|^2  \right] \leq \mathfrak{C}_{\beta, \mathfrak{C}_A}$. Therefore, there exists a random variable $\zeta \in \mathbb{L}^2(\Omega,\mathcal{F}_{T},d\mathbb{P})$ such that $M^n_{T} \rightarrow \zeta$ weakly in $\mathbb{L}^2(\Omega,\mathcal{F}_{T},d\mathbb{P})$.

Given that the filtration $\mathbb{F}$ satisfies the standard assumptions, we can consider the RCLL version of the martingale $\left( \mathbb{E}\left[ \zeta \mid \mathcal{F}_t \right]\right)_{t \leq T}$, denoted by $M$. Indeed, for any stopping time $\sigma \in \mathcal{T}_{[0,T]}$ and any $\chi \in \mathbb{L}^2\left(\Omega, \mathcal{F}_{\sigma},d\mathbb{P} \right)$, we have
\begin{equation*}
	\begin{split}
		\mathbb{E}\left[ M^n_{\sigma} \chi  \right] = \mathbb{E}\left[ \mathbb{E}\left[ M^n_{T} \mid \mathcal{F}_{\sigma} \right] \chi  \right]=\mathbb{E}\left[  M^n_{T} \chi   \right] \rightarrow \mathbb{E}\left[ \zeta \chi   \right]
		= \mathbb{E}\left[ \mathbb{E}\left[  \zeta  \mid \mathcal{F}_{\sigma} \right] \chi  \right]=\mathbb{E}\left[ M_{\sigma} \chi  \right].
	\end{split}
\end{equation*}
Then, for any stopping time $\sigma \in \mathcal{T}_{[0,T]}$, $M^n_{\sigma} \rightarrow M_{\sigma}$ weakly in $\mathbb{L}^2\left(\Omega, \mathcal{F}_{\sigma},d\mathbb{P} \right)$.
	
Next, let $\sigma \in \mathcal{T}_{[0,T]}$ be any stopping time. From the equation
	$$
	K^n_\sigma=Y^n_0-Y^n_\sigma-\int_{0}^{t} \mathfrak{f}(s)dA_s+\int_{0}^{\sigma} dM^n_s,
	$$
	we have the following weak convergence
	$$
	K^n_\sigma \rightharpoonup K_\sigma := Y_0-Y_\sigma-\int_{0}^{t} \mathfrak{f}(s)dA_s+\int_{0}^{\sigma} dM_s \quad \text{ as }~ n \rightarrow +\infty.
	$$
	By construction the sequence of process $\{K^n\}_{n \geq 1}$ are increasing predictable with $K^n_0=0$, the limit process $K$ is also predictable increasing with $\mathbb{E}[|K_T|^2]<+\infty$ and $K_0=0$, in fact it is equal to its dual predictable projection, which allows to deduce the described properties. Finally, by the monotone limit theorem for regulated processes; see, for instance, Lemma 1.4 in \cite{Marzougue2021}; the limiting processes $Y$ and $K$ has regulated trajectories on $[0,T]$. In particular, the left limited process $Y_-$ is well defined and we have
	\begin{equation*}
		\begin{split}
			\mathbb{E}\left[  \int_{0}^{T} \mathcal{E}(\beta A)_{s} \left| Y_{s-} \right|^2 dA_s\right] 
			&  \leq \mathfrak{C}_{\beta,\mathfrak{C}_A} \left( \mathbb{E}\left[  \esssup_{\tau \in \mathcal{T}_{[0,T]}} \left| \mathcal{E}(\beta A)_{\tau} \xi_{\tau} \right|^2\right] +  \mathbb{E}\left[  \int_{0}^{T} \mathcal{E}(\beta A)_{s} \left| \mathfrak{f}(s) \right|^2 dA_s\right] 
			\right)  .
		\end{split}
	\end{equation*}
	Consequently, we derive that the limited state process $Y$ satisfies the following GBSDE:
	\begin{equation}\label{blad_beeda}
			Y_t=\xi_T+\int_{t}^{T}\mathfrak{f}(s)dA_s+(K_T-K_t)-(M_T-M_t),\quad t \in [0,T].
	\end{equation}
	
	This completes the proof of Lemma \ref{regulated}.\\
\end{proof}

	We next verify that the limiting process satisfies the obstacle
	condition \eqref{basic equation.indp}-(ii).
	\begin{lemma}\label{dom}
	The limiting state process $Y$ verifies
		$$
		Y_t\geq \xi_t,
		\qquad t\in[0,T].
		$$
	\end{lemma}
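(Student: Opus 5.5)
The plan is to combine the uniform bound on $K^n$ from Lemma \ref{lemma of unifrom estimation} with the monotonicity $Y^n\nearrow Y$ from Lemma \ref{increasing}, and to treat separately the times where $\xi$ has a negative right jump and all other times. The case $t=T$ is trivial, since $Y_T=\xi_T$.

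\emph{Step 1 (Lebesgue-a.e. domination).} Since $Y^n\leq Y^{n+1}\leq Y$ and the left limits inherit this order, we have $(Y_{s-}-\xi_{s-})^-\leq (Y^n_{s-}-\xi_{s-})^-$ for every $n$. By \eqref{K-n}, $n\int_0^T(Y^n_{s-}-\xi_{s-})^-ds\leq K^n_T$. Lemma \ref{lemma of unifrom estimation} and the Cauchy--Schwarz inequality then give
\[
\mathbb{E}\left[\int_0^T (Y_{s-}-\xi_{s-})^-\,ds\right]\leq \frac{1}{n}\,\mathbb{E}[K^n_T]\leq \frac{\mathfrak{C}}{n}\longrightarrow 0 .
\]
Hence, a.s., $Y_{s-}\geq \xi_{s-}$ for $ds$-a.e. $s\in[0,T]$. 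Both $Y$ and $\xi$ are regulated (Lemma \ref{regulated}), so for any $t<T$ we may let $s\downarrow t$ along a sequence in this full-measure set. Since $\lim_{s\downarrow t}Y_{s-}=Y_{t+}$ and similarly for $\xi$, this gives $Y_{t+}\geq \xi_{t+}$ for all $t\in[0,T)$, a.s.

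\emph{Step 2 (times with $\Delta_+\xi_t\geq 0$).} From \eqref{blad_beeda}, $\Delta_+Y_t=-\Delta_+K_t\leq 0$ because $K$ is nondecreasing. Therefore
\[
Y_t\geq Y_{t+}\geq \xi_{t+}=\xi_t+\Delta_+\xi_t\geq \xi_t .
\]

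\emph{Step 3 (times with $\Delta_+\xi_t<0$).} At each $\sigma_{n,i}<T$ the penalized scheme \eqref{penalization} gives $Y^n_{\sigma_{n,i}}=Y^n_{\sigma_{n,i}+}\vee\xi_{\sigma_{n,i}}\geq \xi_{\sigma_{n,i}}$. Since $Y\geq Y^n$, we get $Y\geq\xi$ on $\bigcup_{n,i}\llbracket\sigma_{n,i}\rrbracket$. It therefore remains to show that, a.s., every $t$ with $\Delta_+\xi_t<0$ belongs to this union. Such a $t$ satisfies $\Delta_+\xi_t<-1/m$ for some $m$. For $n\geq m$, on the event $\{\sigma_{n,j_n}=T\}$ the inductive construction lists all right-jump times with $\Delta_+\xi<-1/n$ among $\sigma_{n,1},\dots,\sigma_{n,j_n}$, so $t$ is captured. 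The graphs $\bigcup_i\llbracket\sigma_{n,i}\rrbracket$ increase in $n$ (as noted in Lemma \ref{increasing}). Hence the event that some negative right-jump time is never captured is contained in $\{\sigma_{n,j_n}<T\}$ for all large $n$. Its probability is at most $\liminf_n\mathbb{P}(\sigma_{n,j_n}<T)=0$.

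The main obstacle is Step 3: justifying carefully that the array $\{\sigma_{n,i}\}$ exhausts the negative right jumps of $\xi$ almost surely, including the bookkeeping of the merged indices $\sigma_{n+1,j_{n+1}+i}=\sigma_{n+1,j_{n+1}}\vee\sigma_{n,i}$. If that bookkeeping proves delicate, I would fall back on the nestedness of the graphs together with Remark \ref{existence}, which provides a countable cover of the right-jump times. Step 1 is routine; its only subtlety is the passage from $ds$-a.e. domination to right limits, which the regulated trajectories make harmless.
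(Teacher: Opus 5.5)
Your proof is correct and follows the paper's argument: Lebesgue-a.e. domination from the uniform bound on $K^n_T$, passage to right limits to get $Y_{t+}\geq\xi_{t+}$, and then the case split on the sign of $\Delta_+\xi_t$, using $\Delta_+Y=-\Delta_+K\leq 0$ in one case and $Y^n_{\sigma_{n,i}}=Y^n_{\sigma_{n,i}+}\vee\xi_{\sigma_{n,i}}$ at penalization times in the other. Your Step~1 shortcut, which bounds $(Y_{s-}-\xi_{s-})^-$ by $(Y^n_{s-}-\xi_{s-})^-$ via monotonicity, is cleaner than the paper's subsequence, Fubini and dense-set argument, and your $\liminf_n\{\sigma_{n,j_n}<T\}$ argument makes rigorous the exhaustion property that the paper only asserts.
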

	
	\begin{proof}
		Recall that the continuous part of the penalization process is given by
		\[
		K_t^{n,\ast}
		=
		n\int_0^t
		\left(Y_{s-}^n-\xi_{s-}\right)^-\,ds.
		\]
		By Lemma \ref{lemma of unifrom estimation}, there
		exists a constant $\mathfrak{C}_{\beta,\mathfrak{C}_A}>0$, independent of $n$, such that
		$$
		\sup_{n\geq1}
		\mathbb{E}\left[|K_T^n|^2\right]
		\leq \mathfrak{C}_{\beta,\mathfrak{C}_A}.
		$$
		Since \(0\leq K_T^{n,\ast}\leq K_T^n\) by construction, it follows that
	$$
			\mathbb{E}\left[
			\int_0^T
			\left(Y_{s-}^n-\xi_{s-}\right)^-\,ds
			\right]
			=
			\frac{1}{n}\mathbb{E}\left[K_T^{n,\ast}\right] 
			\leq
			\frac{1}{n}
			\left(
			\mathbb{E}\left[|K_T^n|^2\right]
			\right)^{1/2}
			\leq
			\frac{\sqrt{\mathfrak{C}_{\beta,\mathfrak{C}_A}}}{n}.
		$$
		Consequently,
		\[
		\left(Y_{s-}^n-\xi_{s-}\right)^-
		\longrightarrow 0
		\quad\text{in }
		\mathbb{L}^1\bigl(\Omega\times[0,T],
		d\mathbb{P}\otimes ds\bigr).
		\]
		
		Since \(Y^n\) and \(\xi\) have regulated trajectories, each of their
		sample paths has at most countably many discontinuities. Hence,
		\[
		\left(Y_s^n-\xi_s\right)^-
		\longrightarrow 0
		\quad\text{in }
		\mathbb{L}^1\bigl(\Omega\times[0,T],
		d\mathbb{P}\otimes ds\bigr).
		\]
		
		We may therefore extract a subsequence, still denoted by
	$ \{Y^n\}_{n\geq1}$, such that
		\[
		\left(Y_t^n-\xi_t\right)^-
		\longrightarrow 0,
		\qquad
		d\mathbb{P}\otimes dt\text{-a.e.}
		\]
		By Fubini's theorem, there exists a deterministic set
		\(D\subset[0,T]\), having full Lebesgue measure and therefore dense in
		\([0,T]\), such that, for every \(t\in D\),
		\[
		\left(Y_t^n-\xi_t\right)^-
		\longrightarrow0,
		\qquad \mathbb{P}\text{-a.s.}
		\]
		
		Since \(Y_t^n\nearrow Y_t\) for every \(t\in[0,T]\) from Lemma \ref{increasing}, we obtain
		\[
		Y_t\geq\xi_t,
		\qquad t\in D,
		\quad\mathbb{P}\text{-a.s.}
		\]
		By replacing \(D\), if necessary, with a countable dense subset and
		taking a countable intersection of events of probability one, the
		previous inequality may be assumed to hold simultaneously for every
		\(t\in D\).
		
		Let now \(t\in[0,T)\). Choose a sequence
		$\{t_m\}_{m\geq1}\subset D$ such that
		$
		t_m\downarrow t.
		$
		Since both $Y$ and $\xi$ admit right limits, letting
		$m\rightarrow+\infty$ in
		$
		Y_{t_m}\geq\xi_{t_m}
		$
		yields
		\[
		Y_{t+}\geq\xi_{t+}.
		\]
		
		We now prove the desired inequality at time \(t\). From the limiting
		GBSDE \eqref{blad_beeda} and the monotonicity of \(K\), the right jumps of \(Y\) satisfy
		\[
		\Delta_+Y_t=-\Delta_+K_t\leq0.
		\]
		Thus,
		\[
		Y_t
		=
		Y_{t+}-\Delta_+Y_t
		=
		Y_{t+}+\Delta_+K_t
		\geq Y_{t+}.
		\]
		
		\begin{enumerate}
			\item Suppose first that
			\[
			\Delta_+\xi_t\geq0.
			\]
			Then \(\xi_{t+}\geq\xi_t\), and consequently
			\[
			Y_t
			\geq
			Y_{t+}
			\geq
			\xi_{t+}
			\geq
			\xi_t.
			\]

			\item Consider now the case
			\[
			\Delta_+\xi_t<0.
			\]
			Then we have $t \in \bigcup_i \llbracket \sigma_{n,i} \rrbracket$ for sufficiently large $n$. Indeed, by the exhaustion property of the arrays
			\(\{\sigma_{n,i}\}\), the time \(t\) is included among the penalization
			times for all sufficiently large \(n\), on the set where the arrays
			exhaust the negative right jumps of \(\xi\). At such a time,
			$
			\Delta_+K_t^n
			=
			\left(Y_{t+}^n-\xi_t\right)^-.
			$
			Since
			$
			\Delta_+Y_t^n=-\Delta_+K_t^n,
			$
			we have
			$
			Y_t^n
			=
			Y_{t+}^n+\Delta_+K_t^n 
			=
			Y_{t+}^n+
			\left(Y_{t+}^n-\xi_t\right)^- 
			=
			Y_{t+}^n\vee\xi_t.
			$
			In particular,
			\[
			Y_t^n\geq\xi_t
			\]
			for all sufficiently large \(n\). Passing to the increasing limit
			gives
			\[
			Y_t\geq\xi_t.
			\]
		\end{enumerate}
		
		We have therefore proved that
		\[
		Y_t\geq\xi_t,
		\qquad t\in[0,T).
		\]
		Finally, at the terminal time,
		\[
		Y_T=\xi_T,
		\]
		and hence
		\[
		Y_t\geq\xi_t,
		\qquad\text{for every }t\in[0,T],
		\quad\mathbb{P}\text{-a.s.}
		\]
	
	Completing the proof.
		
	\end{proof}
	
	To conclude the proof of Theorem \ref{main-result_indp}. It remains to show the minimality condition  \eqref{basic equation.indp}-(iii) for $K$. To do so, we use the properties of the Snell envelope notion.
	\begin{lemma}\label{SKO}
	 The limited process $K$ satisfies the minimality condition \eqref{basic equation.indp}-(iii).
	\end{lemma}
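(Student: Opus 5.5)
The plan is to avoid checking the Skorokhod condition directly on the limit of the penalized scheme and instead identify the limit through the Snell envelope, as the sentence before the lemma suggests. By Lemmas \ref{increasing}, \ref{regulated} and \ref{dom}, the limit $(Y,M,K)$ satisfies \eqref{basic equation.indp}-(i)--(ii). Here $Y\in\mathcal{S}^2_\beta$, $K$ is nondecreasing, regulated and square integrable, and $M\in\mathcal{M}^2_\beta$. Let $\widetilde Y$ be the Snell-envelope process of Proposition \ref{Snell.Envlp}, so that $(\widetilde Y,\widetilde M,\widetilde K)$ is the unique solution of \eqref{basic equation.indp} and satisfies the minimality condition (iii). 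It then suffices to show $Y=\widetilde Y$ up to indistinguishability.

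Once $Y=\widetilde Y$ is known, (i) for both triplets gives
\[
\widetilde M-M=\widetilde K-K
\]
on $[0,T]$. The left side is an RCLL martingale and the right side is a predictable finite variation process vanishing at $0$, so both vanish. Hence $M=\widetilde M$, $K=\widetilde K$, and (iii) is inherited from $\widetilde K$.

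The inequality $Y\ge \widetilde Y$ is the easy half. From (i), the process
\[
S_t:=Y_t+\int_0^t\mathfrak{f}(s)\,dA_s=Y_0+M_t-K_t
\]
is a strong supermartingale: $M$ is a square integrable martingale, so optional sampling applies, and $K$ is nondecreasing and optional. By Lemma \ref{dom}, $S\ge\widehat\xi:=\xi+\int_0^\cdot\mathfrak{f}\,dA$. By minimality of the Snell envelope (Definition \ref{o22}), $S\ge \widetilde Y+\int_0^\cdot\mathfrak{f}\,dA$, that is, $Y\ge\widetilde Y$.

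For the reverse inequality I would compare each penalized $Y^n$ with $\widetilde Y$ and show $Y^n\le \widetilde Y$ for every $n$. Then monotone convergence (Lemma \ref{increasing}) gives $Y\le\widetilde Y$. I would apply the Tanaka-type estimate \cite[Corollary A.5]{klimsiak2019reflected}, as in Proposition \ref{comp_pena}, to $(Y^n-\widetilde Y)^+$ on $[t,T]$. The $\mathfrak{f}\,dA$ terms cancel, and the terminal term is $0$ since both processes equal $\xi_T$ at $T$. The martingale term has zero conditional expectation by Lemma \ref{ouma}. The $d\widetilde K$ terms enter with a favourable sign because $\widetilde K$ is nondecreasing.

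The remaining terms come from the penalization, and I would show they vanish on the set where $Y^n$ exceeds $\widetilde Y$:
\begin{itemize}
\item The absolutely continuous part $n(Y^n_{s-}-\xi_{s-})^-\,ds$ appears multiplied by $\mathds{1}_{\{Y^n_{s-}>\widetilde Y_{s-}\}}$. On that set $Y^n_{s-}>\widetilde Y_{s-}\ge\xi_{s-}$, so the integrand is $0$.
\item At a penalization time $\sigma_{n,i}$, the jump term is $(Y^n_{\sigma+}-\xi_\sigma)^-\mathds{1}_{\{Y^n_\sigma>\widetilde Y_\sigma\}}$. If it were positive, then $Y^n_\sigma=\xi_\sigma$, which contradicts $Y^n_\sigma>\widetilde Y_\sigma\ge\xi_\sigma$.
\end{itemize}
Taking conditional expectations then yields $(Y^n_t-\widetilde Y_t)^+=0$ a.s. for each $t$, and the inequality extends to all $t$ simultaneously by the regularity of the paths.

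The main obstacle is justifying the Tanaka/Meyer inequality for the difference of two merely regulated processes. One of them has right jumps both at the times $\sigma_{n,i}$ and through $\widetilde K$, and the integrator is $dB=dA+ds$ with predictable jumps. I would rely on \cite[Corollary A.5]{klimsiak2019reflected} for optional semimartingales with regulated paths and check separately the sign of the right-jump contribution $(\cdot)^+$ evaluated at $s$ versus $s+$. If this proves delicate, the fallback is a stopping argument on $\nu=\inf\{s\ge\tau: Y^n_s\le\widetilde Y_s\}$, as in Proposition \ref{GBSDE-propo3}. On $\llbracket\tau,\nu\llbracket$ the penalization increments of $Y^n$ vanish, so $Y^n-\widetilde Y$ is a strong submartingale there; combined with $Y^n_\nu\le\widetilde Y_\nu$, this gives the contradiction.
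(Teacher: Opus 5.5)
Your proof is correct. Its overall architecture matches the paper's:
\begin{itemize}
\item The lower bound $Y\ge\widetilde Y$, obtained from minimality of the Snell envelope, is exactly the paper's inequality \eqref{f1}.
\item Your endgame is the same identification the paper performs. You argue that an RCLL martingale equal to a predictable finite-variation process vanishes, so $(M,K)=(\widetilde M,\widetilde K)$ and (iii) is inherited. The paper does this through uniqueness of the Mertens decomposition plus \cite[Corollary 3.11]{klimsiak2019reflected}, and it uses your argument verbatim at the end of Theorem \ref{main result}. Like the paper, you rely on the predictability of $K$ asserted in Lemma \ref{regulated}.
\end{itemize}

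The genuine difference lies in the upper bound $Y^n\le\widetilde Y$. The paper never compares two processes. It observes that $(Y^n,M^n,K^n)$ itself solves \eqref{basic equation.indp} with the solution-dependent obstacle $\xi^n:=\xi\wedge Y^n$. It checks the Skorokhod condition through $Y^n-\xi^n=(Y^n-\xi)^+$ and the same right-jump argument you use. Proposition \ref{Snell.Envlp.general} then represents $Y^n$ as the Snell envelope of $\xi^n+\int_0^\cdot\mathfrak{f}\,dA\le\widehat\xi$, and monotonicity of the Snell envelope in the reward gives the bound.

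You instead run a Meyer--Tanaka comparison of the penalized equation against the already-constructed reflected solution, and you correctly show that both penalization terms are inactive on $\{Y^n>\widetilde Y\}$. The two routes trade off as follows:
\begin{itemize}
\item The paper's trick is shorter and needs no stochastic calculus for regulated processes. It does, however, invoke the whole existence--uniqueness--representation machinery for an obstacle built from the unknown.
\item Your route is the classical ``penalized stays below reflected'' comparison. It is more transparent about the mechanism, at the price of relying on \cite[Corollary A.5]{klimsiak2019reflected}.
\end{itemize}

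Two small points:
\begin{itemize}
\item The integral $\int_t^T\mathds{1}_{\{Y^n_{s-}>\widetilde Y_{s-}\}}\,d(M^n-\widetilde M)_s$ has zero conditional expectation simply because the integrand is bounded and $M^n-\widetilde M$ is square integrable. Lemma \ref{ouma} is not the relevant tool here.
\item $Y^n$ and $\widetilde Y$ are only regulated, so an a.s.\ inequality for each fixed $t$ does not upgrade to indistinguishability by path regularity alone. Run the same estimate from an arbitrary stopping time $\sigma$ and invoke the optional section theorem. The paper is equally loose at this point.
\end{itemize}
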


	\begin{proof}
		From the limiting GBSDE \eqref{blad_beeda}, we can see that the process $\big(Y_t+\int_{0}^{t} \mathfrak{f}(s)dA_s\big)_{t \leq T}$ is a supermartingale that dominates $\big(\xi_t+\int_{0}^{t} \mathfrak{f}(s)dA_s\big)_{t \leq T}$ from Lemma \ref{dom}. Using the definition of the Snell envelope of the process $\big(\xi_t+\int_{0}^{t} \mathfrak{f}(s)dA_s\big)_{t \leq T}$, we derive 
		\begin{equation}\label{f1}
		Y_t \geq \esssup_{\tau \in \mathcal{T}_{[t,T]}} \mathbb{E}\left[\xi_\tau+\int_{t}^{\tau} \mathfrak{f}(s)dA_s \mid  \mathcal{F}_t\right],\qquad t \in [0,T].
		\end{equation}
		Let us now show, the reverse inequality. To this end, observe that the triplet $(Y^n,M^n,K^n)$ constructed in the penalized schemes \eqref{penalization} can be viewed as a solution of the GRBSDE \eqref{basic equation.indp} associated with $(\xi^n,\mathfrak{f}(\cdot),A)$, where $\xi^n:=\xi \wedge Y^n$ for $n \geq 1$. Note that $Y^n \geq \xi^n$, and $Y^n-\xi^n=(Y^n-\xi)^+$ which gives
		\begin{equation*}
			\begin{split}
				&\int_{t}^{T}\left(Y^n_{s-}-\xi^n_{s-}\right)dK^{n,\ast}_s+\sum_{t \leq s < T}\left(Y^n_s-\xi^n_s\right)\Delta_{+}K^{n}_s\\
				&=n\int_{t}^{T}\left(Y^n_{s-}-\xi^n_{s-}\right)\left(Y^n_{s-}-\xi_{s-}\right)^{-}ds+\sum_{t \leq \sigma_{n,i} < T}  (Y^n_{\sigma_{n,i}}-\xi^n_{\sigma_{n,i}}) \left(Y^n_{\sigma_{n,i}+}-\xi_{\sigma_{n,i}}\right)^{-}\\
				&=n\int_{t}^{T}\left(Y^n_{s-}-\xi_{s-}\right)^+\left(Y^n_{s-}-\xi_{s-}\right)^{-}ds+\sum_{t \leq \sigma_{n,i} < T}  (Y^n_{\sigma_{n,i}}-\xi_{\sigma_{n,i}})^+ \left(Y^n_{\sigma_{n,i}+}-\xi_{\sigma_{n,i}}\right)^{-}\\
				&=0.
			\end{split}
		\end{equation*}
		The proof of $\sum_{t \leq \sigma_{n,i} < T}  (Y^n_{\sigma_{n,i}}-\xi_{\sigma_{n,i}})^+ \left(Y^n_{\sigma_{n,i}+}-\xi_{\sigma_{n,i}}\right)^{-}=0$ follows similar argument as those used in Lemma \ref{result needed for majoration}, where we first assume that $\sum_{t \leq \sigma_{n,i} < T}  (Y^n_{\sigma_{n,i}}-\xi_{\sigma_{n,i}})^+ \left(Y^n_{\sigma_{n,i}+}-\xi_{\sigma_{n,i}}\right)^{-} \neq 0$ and then gets a contradiction by the same approach.
		
		Therefore, $(Y^n,M^n,K^n)$ is a solution of the GRBSDE \eqref{basic equation.indp} associated with $(\xi^n,\mathfrak{f}(\cdot),A)$. Then, applying Proposition \ref{Snell.Envlp.general}, we may write 
			$$
		Y^n_t=\esssup_{\tau \in \mathcal{T}_{[t,T]}}
		\mathbb{E}\left[
		\xi^n_\tau+\int_{t}^{\tau}\mathfrak{f}(s)dA_s
		\mid \mathcal{F}_t
		\right],\quad t \in [0,T].
		$$
		By the definition of $\xi^n$, we have $\xi^n \leq \xi$ and then
			$$
		Y^n_t \leq \esssup_{\tau \in \mathcal{T}_{[t,T]}}
		\mathbb{E}\left[
		\xi_\tau+\int_{t}^{\tau}\mathfrak{f}(s)dA_s
		\mid \mathcal{F}_t
		\right],\quad t \in [0,T].
		$$
		Consequently, letting $n \rightarrow +\infty$ and using Lemma \ref{increasing}, we deduce that
		\begin{equation}\label{f2}
		Y_t \leq \esssup_{\tau \in \mathcal{T}_{[t,T]}} \mathbb{E}\left[\xi_\tau+\int_{t}^{\tau} \mathfrak{f}(s)dA_s \mid  \mathcal{F}_t\right],\qquad t \in [0,T].
		\end{equation}
		By combining the inequality \eqref{f1} and \eqref{f2}, we deduce
		$$
		Y_t=\esssup_{\tau \in \mathcal{T}_{[t,T]}}
		\mathbb{E}\left[
		\xi_\tau+\int_{t}^{\tau}\mathfrak{f}(s)dA_s
		\mid \mathcal{F}_t
		\right],\quad t \in [0,T].
		$$
		Moreover, from \eqref{blad_beeda}, we derive that the process $K$ corresponds to the increasing of the Mertens decomposition of the wealth process $\widehat{\xi}=\big(\xi_t+\int_{0}^{t}\mathfrak{f}(s)dA_s\big)_{t \leq T}$ (see the proof of Proposition \ref{Snell.Envlp}). By applying Corollary 3.11 in \cite{klimsiak2019reflected} along with the fact that $\xi$ has regulated trajectories, in particular finite left limits, we conclude
		$$
		\int_{0}^{T}\left(Y_{s-}-\xi_{s-}\right)dK^{\ast}_s+\sum_{0 \leq s <T}\left(Y_s-\xi_s\right)\Delta_{+}K_s=0~\text{ a.s.}
		$$
		Completing the proof of Lemma \ref{SKO}.
	\end{proof}
	
	The proof of Theorem \ref{main-result_indp} is now complete.
		\end{proof}

Let us point out that the conclusion of Lemma \ref{dom} is no longer
guaranteed if the penalization scheme \eqref{penalization}, used to
approximate the solution of the GRBSDE \eqref{basic equation.indp}, is
replaced by
\begin{equation}\label{penalization.1}
	\begin{split}
		Y_t^n
		={}&
		\xi_T
		+\int_t^T
		\left\{
		\mathfrak{f}(s)
		+n\left(Y_{s-}^n-\xi_{s-}\right)^-
		\right\}\,dA_s \\
		&-\int_t^T dM_s^n
		+\sum_{t\leq\sigma_{n,i}<T}
		\left(
		Y_{\sigma_{n,i}+}^n-\xi_{\sigma_{n,i}}
		\right)^-,
		\qquad t\in[0,T].
	\end{split}
\end{equation}

Indeed, although the uniform estimates established in Lemma
\ref{lemma of unifrom estimation} may still be derived for this
modified scheme, they only imply that
\[
\lim_{n\rightarrow+\infty}
\mathbb{E}\left[
\int_0^T
\left(Y_{s-}^n-\xi_{s-}\right)^-
\,dA_s
\right]
=0.
\]
This convergence only controls violations of the obstacle on the
support of the random measure \(dA\). In particular, it provides no
information on intervals that are not charged by \(dA\). Therefore,
one cannot generally deduce that the limiting process \(Y\) satisfies
\[
Y_t\geq\xi_t,
\qquad t\in[0,T],
\]
and condition \eqref{basic equation.indp}-(ii) may fail.

To see this more clearly, consider the setting of
Example \ref{exemple}-(1) with \(q=1\), and let
\[
A_t:=\mathds{1}_{\{t\geq1\}},
\qquad t\in[0,T].
\]
The Stieltjes measure induced by \(A\) is concentrated at the single
time \(t=1\). More precisely,
\[
dA_s=\delta_1(ds),
\]
and hence
\[
\int_0^T
\left(Y_{s-}^n-\xi_{s-}\right)^-
\,dA_s
=
\left(Y_{1-}^n-\xi_{1-}\right)^-.
\]
Consequently, the preceding convergence yields only
\[
\lim_{n\rightarrow+\infty}
\mathbb{E}\left[
\left(Y_{1-}^n-\xi_{1-}\right)^-
\right]
=0.
\]
Provided that \(Y_{1-}^n\) converges to \(Y_{1-}\), one may at most
infer that
\[
Y_{1-}\geq\xi_{1-},
\qquad \mathbb{P}\text{-a.s.}
\]
However, no conclusion can be drawn for times \(t\neq1\).

\begin{example}\label{alahoakbar}
Let \(T=2\), and assume that
\[
\mathfrak{f}\equiv0,
\qquad
\xi_T=\xi_2=0,
\]
and define the optional regulated obstacle $(\xi_t)_{t \leq T}$ by
\[
\xi_t
:=
\begin{cases}
	0, & 0\leq t\leq1,\\[1mm]
	t-1, & 1<t<2,\\[1mm]
	0, & t=2.
\end{cases}
\]
The process \(\xi\) has no negative right jumps on \([0,2)\), so that
the discrete penalization term involving the stopping times
\(\{\sigma_{n,i}\}\) vanishes. Moreover,
\[
\xi_{1-}=0.
\]

For every \(n\geq1\), the pair
\[
Y_t^n\equiv0,
\qquad
M_t^n\equiv0,
\]
satisfies \eqref{penalization.1}. Indeed,
\[
\int_t^2
\left(Y_{s-}^n-\xi_{s-}\right)^-
\,dA_s
=
\mathds{1}_{\{t\leq1\}}
\left(Y_{1-}^n-\xi_{1-}\right)^-
=0,
\]
and the remaining terms in \eqref{penalization.1} are also equal to
zero. It follows that
\[
Y_t^n=0,
\qquad t\in[0,2],
\]
for every \(n\), and therefore
\[
Y_t=0,
\qquad t\in[0,2].
\]

Therefore, we have $Y_{1-}\geq\xi_{1-},$ $\mathbb{P}\text{-a.s}$. Nevertheless,
\[
Y_t=0< t-1=\xi_t,
\qquad t\in(1,2).
\]
Thus, the limiting process does not dominate the obstacle, even
though
\[
\int_0^T
\left(Y_{s-}^n-\xi_{s-}\right)^-
\,dA_s=0
\]
for every \(n \geq 1\).
\end{example}

The above example shows that penalization with respect to \(dA_s\) alone
is insufficient when the measure \(dA\) does not have full support on
\([0,T]\). This is precisely why the continuous penalization term
should instead be integrated with respect to \(ds\), which is our case in \eqref{penalization}.

\subsection{The general case}

We now consider the general setting in which the generator \(f\) may also depend on the $y$-variable.

The main result of this subsection is stated as follows.

\begin{theorem}\label{main result}
	Under assumption \textbf{(H)}, the GRBSDE \eqref{basic equation} associated with the data \((\xi,f,A)\) admits a unique solution \((Y,M,K)\).
\end{theorem}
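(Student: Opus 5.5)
The plan is a Picard-type fixed-point argument built on the case already settled, where the generator does not depend on $y$. For the existence and uniqueness of that case we invoke Proposition \ref{Snell.Envlp} (equivalently Theorem \ref{main-result_indp}).

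Work in the Banach space $\mathcal{S}^2_\beta$ with norm $\|Y\|^2=\mathbb{E}[\sup_{t}\mathcal{E}(\beta A)_t|Y_t|^2]$. The parameter $\beta$ is chosen later. Given $y\in\mathcal{S}^2_\beta$, set $\mathfrak{f}^y(s):=f(s,y_{s-})$.

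\begin{itemize}
\item By Remark \ref{rmq1}, $\mathfrak{f}^y$ is predictable.
\item By \textbf{(H2)}-(ii),(iii), \textbf{(H3)} and Remark \ref{rmq2}, $\mathbb{E}\int_0^T\mathcal{E}(\beta A)_s|\mathfrak{f}^y(s)|^2dA_s<\infty$.
\end{itemize}

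So the data $(\xi,\mathfrak{f}^y,A)$ satisfy \textbf{(H)}. Proposition \ref{Snell.Envlp} then gives a unique solution $(Y,M,K)$ of \eqref{basic equation.indp}, and we define $\Psi(y):=Y$. A fixed point $Y=\Psi(Y)$ is, by construction, a solution of \eqref{basic equation}. Conversely, every solution of \eqref{basic equation} is a fixed point of $\Psi$, as in the proof of Proposition \ref{Snell.Envlp.general}. Hence existence and uniqueness both reduce to showing that $\Psi$ is a strict contraction for $\beta$ large.

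For the contraction estimate, avoid Itô's formula and use the Snell representation. By Proposition \ref{Snell.Envlp}, $\Psi(y^i)_t=\esssup_{\tau\in\mathcal{T}_{[t,T]}}\mathbb{E}[\xi_\tau+\int_t^\tau f(s,y^i_{s-})dA_s\mid\mathcal{F}_t]$. Since $|\esssup a-\esssup b|\le\esssup|a-b|$ and $f$ is Lipschitz,
$$
|\bar Y_t|\le\esssup_{\tau}\mathbb{E}\Big[\int_t^\tau K|\bar y_{s-}|dA_s\,\Big|\,\mathcal{F}_t\Big]\le K\,\mathbb{E}\Big[\int_t^T|\bar y_{s-}|dA_s\,\Big|\,\mathcal{F}_t\Big].
$$
This is exactly the inequality obtained in the proof of Theorem \ref{GBSDE-general_thm}. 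From there we follow that proof verbatim:

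\begin{itemize}
\item multiply by $\sqrt{\mathcal{E}(\beta A)_t}$;
\item bound $J_t=\int_t^T\sqrt{\mathcal{E}(\beta A)_t/\mathcal{E}(\beta A)_s}\,dA_s$ by $(1+\sqrt{1+\beta\mathfrak{C}_A})/\beta$, via \eqref{AH};
\item use the left-continuity bound $\sqrt{\mathcal{E}(\beta A)_s}|\bar y_{s-}|\le \sqrt{1+\beta\mathfrak C_A}\sup_u\sqrt{\mathcal{E}(\beta A)_u}|\bar y_u|$ from Remark \ref{rmq2}, or handle it as in that proof;
\item apply Doob's maximal inequality.
\end{itemize}

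This yields $\|\Psi(y^1)-\Psi(y^2)\|^2\le 4K^2(1+\beta\mathfrak{C}_A)\big(\tfrac{4}{\beta^2}+\tfrac{2\mathfrak{C}_A}{\beta}\big)\|\bar y\|^2$, up to the harmless factor coming from the left limit.

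The main obstacle is this constant. The factor $1+\beta\mathfrak{C}_A$ coming from $\mathcal{E}(\beta A)_s$ versus $\mathcal{E}(\beta A)_{s-}$ could destroy the smallness as $\beta\to\infty$: the product tends to $8K^2\mathfrak{C}_A^2$, which is not small in general. To avoid this, the weight should be placed on $s-$ rather than $s$. Since $\sqrt{\mathcal{E}(\beta A)_{s-}}|\bar y_{s-}|$ is the left limit of $\sqrt{\mathcal{E}(\beta A)}|\bar y|$, it is bounded by $\sup_u\sqrt{\mathcal{E}(\beta A)_u}|\bar y_u|$ with no extra factor. One then needs $\int_t^T\sqrt{\mathcal{E}(\beta A)_t/\mathcal{E}(\beta A)_{s-}}\,dA_s$ to be small. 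At a jump of size $\Delta$ its contribution is $\Delta$ times the pre-jump weight, which need not be small when $A$ has large atoms.

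If this fails, the fallback is a backward induction in time. Cut $[0,T]$ at the finitely many predictable times where $\Delta_-A>\delta$; finiteness follows from \textbf{(H3)}. On each stochastic interval between consecutive cuts, the jumps are at most $\delta$, so the contraction constant becomes $8K^2\delta^2+o(1)$. Choosing $\delta$ with $8K^2\delta^2<1$ and $\beta$ large makes $\Psi$ a contraction there. At the big atoms themselves, solve the one-step equation $Y_{\tau-}$-problem explicitly, exploiting that $f(\tau,Y_{\tau-})\Delta A_\tau$ is $\mathcal{F}_{\tau-}$-measurable together with the obstacle. Then concatenate the pieces using uniqueness on each piece.

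Finally, $M\in\mathcal{M}^2_\beta$ and $K_T\in L^2$ follow from Proposition \ref{Snell.Envlp} applied to $\mathfrak{f}^Y$. Uniqueness comes from the Banach fixed-point theorem together with the fixed-point characterization above.
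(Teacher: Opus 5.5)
Your first route is exactly the paper's proof: the same Snell-envelope map $\Psi$ on $\mathcal{S}^2_\beta$, the inequality $|\esssup a-\esssup b|\le\esssup|a-b|$, the bound \eqref{AH} on $J$, and Doob's inequality. Your objection to that route is correct, and the paper does not resolve it. In passing from \eqref{Uranus.11} to $K\,\mathbb{E}[XJ_\sigma\mid\mathcal{F}_\sigma]$, the paper uses $\sqrt{\mathcal{E}(\beta A)_s}\,|\bar y_{s-}|\le X$. Only $\sqrt{\mathcal{E}(\beta A)_{s-}}\,|\bar y_{s-}|\le X$ is true, and the lost factor $\sqrt{1+\beta\Delta_-A_s}$ destroys the smallness as $\beta\to\infty$. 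The same step appears in the proof of Theorem \ref{GBSDE-general_thm}. As you also note, moving the weight to $s-$ does not help.

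The genuine gap in your proposal is the step where you solve the one-step problem explicitly at the large atoms. This gap cannot be closed under \textbf{(H)} alone. Fix a predictable jump time $\tau$ and set $c:=\Delta_-A_\tau$ and $a:=\mathbb{E}[Y_\tau\mid\mathcal{F}_{\tau-}]$. The unknown $y=Y_{\tau-}$ must solve the implicit complementarity problem
\[
y\geq\xi_{\tau-},\qquad y-c\,f(\tau,y)-a\geq0,\qquad (y-\xi_{\tau-})\bigl(y-c\,f(\tau,y)-a\bigr)=0.
\]
If $Kc<1$, then $h(y):=y-c\,f(\tau,y)$ is a strictly increasing bijection, and $y=\max\{\xi_{\tau-},h^{-1}(a)\}$ is the unique solution. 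But \textbf{(H)} allows $Kc\geq1$, and then a solution need not exist.

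Here is a counterexample. Take $T=2$, $A_t=\mathds{1}_{\{t\geq1\}}$ and $f(t,y)=y$, so $K=\mathfrak{C}_A=1$. Take $\xi_t=0$ for $t<2$ and $\xi_2=1$. All of \textbf{(H)} holds. For $t<1$ the $dA$-integral equals $Y_{1-}$. Letting $t\uparrow1$ in the equation of Definition \ref{def-GRBSDE} gives
\[
M_2-M_{1-}=1+(K_2-K_{1-})\geq1,
\]
which contradicts $\mathbb{E}[M_2-M_{1-}\mid\mathcal{F}_{1-}]=0$. So no solution exists. With the trivial filtration one sees this directly: $\Psi(y)=\max\{0,1+y_{1-}\}$ on $[0,1)$. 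This $\Psi$ has Lipschitz constant at least $1$ in $\mathcal{S}^2_\beta$ for every $\beta$ (compare $y^1=0$ with $y^2=-\mathds{1}_{[0,1)}$), and it has no fixed point. The same example without the obstacle also defeats Theorem \ref{GBSDE-general_thm}.

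Hence the statement is false as written, in the $Y_{s-}$ formulation that both you and the paper's map $\Psi$ use, and neither your contraction nor your fallback can establish it. Your backward-induction plan is a sensible route to the theorem under an added jump restriction of the type \eqref{intro7}, e.g.\ $K\,\Delta_-A_t\leq1-\varepsilon$. It would combine three pieces:
\begin{itemize}
\item a contraction with weight at $s-$ on the pieces where $\Delta_-A\leq\delta$, where the analogue of $J$ with $\mathcal{E}(\beta A)_{s-}$ in the denominator is at most $\delta+\frac{1+\sqrt{1+\beta\delta}}{\beta}$;
\item the explicit formula above at the at most $\mathfrak{C}_A/\delta$ larger atoms;
\item concatenation of the pieces.
\end{itemize}
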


\begin{proof}
	We define the mapping
	\[
	\Psi:\mathcal{S}_{\beta}^{2}
	\longrightarrow
	\mathcal{S}_{\beta}^{2}
	\]
	as follows. For any \(y\in\mathcal{S}_{\beta}^{2}\), let
	\(\Psi(y)=Y\), where
	\[
	Y_t
	=
	\esssup_{\tau\in\mathcal{T}_{[t,T]}}
	\mathbb{E}\left[
	\xi_\tau
	+
	\int_t^\tau
	{f}(s,y_{s-})\,dA_s
	\Bigm|
	\mathcal{F}_t
	\right],
	\qquad t\in[0,T].
	\]
	
	We first verify that \(\Psi\) is well defined. By the Lipschitz
	continuity of \({f}\), we have
	\[
	\left|{f}(s,y_{s-})\right|^2
	\leq
	2\left|{f}(s,0)\right|^2
	+
	2K^2|y_{s-}|^2.
	\]
	Hence, assumptions \textbf{(H1)}, \textbf{(H2)}-(iii), and
	\textbf{(H3)}, together with
	\(y\in\mathcal{S}_{\beta}^{2}\), yield
	$$
		\mathbb{E}\left[
		\esssup_{\tau\in\mathcal{T}_{[0,T]}}
		\mathcal{E}(\beta A)_\tau|\xi_\tau|^2
		\right]
		+
		\mathbb{E}\left[
		\int_0^T
		\mathcal{E}(\beta A)_s
		\left|{f}(s,y_{s-})\right|^2
		\,dA_s
		\right]
		<+\infty.
	$$
	
	Therefore, Theorem \ref{main-result_indp} ensures that \(Y=\Psi(y)\)
	is well defined and belongs to \(\mathcal{S}_{\beta}^{2}\).
	Moreover, there exists a unique pair
	\[
	(M,K)\in\mathcal{M}_{\beta}^{2}\times\mathcal{K}^{2}
	\]
	such that the triplet \((Y,M,K)\) satisfies
	\begin{equation}\label{basic equation.indp.G}
		\left\{
		\begin{aligned}
			\textnormal{(i)}\quad
			&Y_t
			=
			\xi_T
			+\int_t^T{f}(s,y_{s-})\,dA_s
			+(K_T-K_t)
			-(M_T-M_t),
			&&t\in[0,T],
			\\
			\textnormal{(ii)}\quad
			&Y_t
			\geq\xi_t,
			&&t\in[0,T],\quad\text{a.s.},
			\\
			\textnormal{(iii)}\quad
			&\int_0^T
			\left(Y_{s-}-\xi_{s-}\right)\,dK_s^{\ast}
			+
			\sum_{0\leq s<T}
			\left(Y_s-\xi_s\right)\Delta_+K_s
			=0,
			&&\text{a.s.}
		\end{aligned}
		\right.
	\end{equation}
	
	It remains to prove that \(\Psi\) admits a unique fixed point in the
	complete metric space \(\mathcal{S}_{\beta}^{2}\).
	
	By a classical result from the general theory of processes
	(see, for instance, Theorem IV.84 in
	\cite{DellacherieMeyer1975}), for every
	\(\sigma\in\mathcal{T}_{[0,T]}\), we have
	\[
	\Psi(y)_\sigma
	=
	\esssup_{\tau\in\mathcal{T}_{[\sigma,T]}}
	\mathbb{E}\left[
	\xi_\tau
	+
	\int_\sigma^\tau
	{f}(s,y_{s-})\,dA_s
	\Bigm|
	\mathcal{F}_\sigma
	\right],
	\qquad\text{a.s.}
	\]
	
	Let \(y^1,y^2\in\mathcal{S}_{\beta}^{2}\), and set
	\[
	Y^1:=\Psi(y^1),
	\qquad
	Y^2:=\Psi(y^2).
	\]
	For any \(\mathfrak{S}\in\{Y,y\}\), denote
	\[
	\overline{\mathfrak{S}}
	:=
	\mathfrak{S}^1-\mathfrak{S}^2.
	\]
	
	For any two families of random variables $\{X_\tau : \tau\in\mathcal{T}_{[\sigma,T]}\}$ and $\{Z_\tau : \tau\in\mathcal{T}_{[\sigma,T]}\}$, one has
	\[
	\left|
	\esssup_{\tau\in\mathcal{T}_{[\sigma,T]}}X_\tau
	-
	\esssup_{\tau\in\mathcal{T}_{[\sigma,T]}}Z_\tau
	\right|
	\leq
	\esssup_{\tau\in\mathcal{T}_{[\sigma,T]}}
	|X_\tau-Z_\tau|.
	\]
	Using this inequality and the Lipschitz continuity of
	\(\mathfrak{f}\), we obtain
	\[
	\begin{aligned}
		|\overline{Y}_\sigma|
		&=
		\left|\Psi(y^1)_\sigma-\Psi(y^2)_\sigma\right|
		\\
		&\leq
		\esssup_{\tau\in\mathcal{T}_{[\sigma,T]}}
		\left|
		\mathbb{E}\left[
		\int_\sigma^\tau
		\left(
		{f}(s,y^1_{s-})
		-
		{f}(s,y^2_{s-})
		\right)
		\,dA_s
		\Bigm|
		\mathcal{F}_\sigma
		\right]
		\right|
		\\
		&\leq
		\mathbb{E}\left[
		\esssup_{\tau\in\mathcal{T}_{[\sigma,T]}}
		\int_\sigma^\tau
		\left|
		{f}(s,y^1_{s-})
		-
		{f}(s,y^2_{s-})
		\right|
		\,dA_s
		\Bigm|
		\mathcal{F}_\sigma
		\right]
		\\
		&\leq
		\mathbb{E}\left[
		\int_\sigma^T
		\left|
		{f}(s,y^1_{s-})
		-
		{f}(s,y^2_{s-})
		\right|
		\,dA_s
		\Bigm|
		\mathcal{F}_\sigma
		\right]
		\\
		&\leq
		K\,
		\mathbb{E}\left[
		\int_\sigma^T
		|\overline{y}_{s-}|
		\,dA_s
		\Bigm|
		\mathcal{F}_\sigma
		\right].
	\end{aligned}
	\]
	
	Multiplying both sides by
	\(\sqrt{\mathcal{E}(\beta A)_\sigma}\), we obtain
	\begin{equation}\label{Uranus.11}
		\begin{aligned}
			\sqrt{\mathcal{E}(\beta A)_\sigma}
			|\overline{Y}_\sigma|
			&\leq
			K\,
			\mathbb{E}\left[
			\int_\sigma^T
			\sqrt{\mathcal{E}(\beta A)_s}
			|\overline{y}_{s-}|
			\frac{
				\sqrt{\mathcal{E}(\beta A)_\sigma}
			}{
				\sqrt{\mathcal{E}(\beta A)_s}
			}
			\,dA_s
			\Bigm|
			\mathcal{F}_\sigma
			\right].
		\end{aligned}
	\end{equation}
	
	Set
	\[
	X
	:=
	\esssup_{\tau\in\mathcal{T}_{[0,T]}}
	\sqrt{\mathcal{E}(\beta A)_\tau}
	|\overline{y}_\tau|.
	\]
	Using the equivalent characterization of the
	\(\mathcal{S}_{\beta}^{2}\)-norm for optional processes with
	regulated trajectories, we obtain from \eqref{Uranus.11}
	\[
	\sqrt{\mathcal{E}(\beta A)_\sigma}
	|\overline{Y}_\sigma|
	\leq
	K\,
	\mathbb{E}\left[
	XJ_\sigma
	\Bigm|
	\mathcal{F}_\sigma
	\right],
	\]
	where
	\[
	J_\sigma
	:=
	\int_\sigma^T
	\frac{
		\sqrt{\mathcal{E}(\beta A)_\sigma}
	}{
		\sqrt{\mathcal{E}(\beta A)_s}
	}
	\,dA_s.
	\]
	
	As established in the proof of
	Theorem \ref{GBSDE-general_thm}, the process \(J\) satisfies
	\[
	J_\sigma
	\leq
	C_{\beta,\mathfrak{C}_A}
	:=
	\frac{
		1+\sqrt{1+\beta\mathfrak{C}_A}
	}{\beta},
	\qquad
	\text{a.s.}
	\]
	for every \(\sigma\in\mathcal{T}_{[0,T]}\). Consequently,
	\[
	\sqrt{\mathcal{E}(\beta A)_\sigma}
	|\overline{Y}_\sigma|
	\leq
	K C_{\beta,\mathfrak{C}_A}
	\mathbb{E}\left[
	X\mid\mathcal{F}_\sigma
	\right].
	\]
	
	Let
	\[
	N_t:=\mathbb{E}\left[X\mid\mathcal{F}_t\right],
	\qquad t\in[0,T].
	\]
	Taking the essential supremum over
	\(\sigma\in\mathcal{T}_{[0,T]}\), squaring, and then applying Doob's quadratic maximal inequality, we obtain
	\[
	\begin{aligned}
		\|\Psi(y^1)-\Psi(y^2)\|_{\mathcal{S}_{\beta}^{2}}^2
		&\leq
		K^2C_{\beta,\mathfrak{C}_A}^2
		\mathbb{E}\left[
		\esssup_{\sigma\in\mathcal{T}_{[0,T]}}
		|N_\sigma|^2
		\right]
		\\
		&\leq
		4K^2C_{\beta,\mathfrak{C}_A}^2
		\mathbb{E}\left[X^2\right]
		\\
		&=
		4K^2
		\left(
		\frac{
			1+\sqrt{1+\beta\mathfrak{C}_A}
		}{\beta}
		\right)^2
		\|y^1-y^2\|_{\mathcal{S}_{\beta}^{2}}^2.
	\end{aligned}
	\]
	
	Since
	\[
	\frac{
		1+\sqrt{1+\beta\mathfrak{C}_A}
	}{\beta}
	\longrightarrow0
	\qquad\text{as }\beta\longrightarrow+\infty,
	\]
	we may choose \(\beta>0\) sufficiently large so that
	\[
	\alpha
	:=
	2K
	\frac{
		1+\sqrt{1+\beta\mathfrak{C}_A}
	}{\beta}
	<1.
	\]
	It follows that
	\[
	\|\Psi(y^1)-\Psi(y^2)\|_{\mathcal{S}_{\beta}^{2}}
	\leq
	\alpha
	\|y^1-y^2\|_{\mathcal{S}_{\beta}^{2}},
	\]
	and hence \(\Psi\) is a strict contraction on
	\(\mathcal{S}_{\beta}^{2}\).
	
	By Banach's fixed-point theorem, \(\Psi\) admits a unique fixed point
	\(Y\in\mathcal{S}_{\beta}^{2}\). In particular,
	\[
	\Psi(Y)=Y
	\]
	in the indistinguishable sense. Applying
	\eqref{basic equation.indp.G} with \(y=Y\), there exists a unique
	pair
	\[
	(M,K)\in\mathcal{M}_{\beta}^{2}\times\mathcal{K}^{2}
	\]
	such that \((Y,M,K)\) satisfies
	\[
	Y_t
	=
	\xi_T
	+\int_t^T\mathfrak{f}(s,Y_{s-})\,dA_s
	+(K_T-K_t)
	-(M_T-M_t),
	\]
	together with the obstacle and Skorokhod conditions.
	
	Finally, suppose that
	\((\widetilde{Y},\widetilde{M},\widetilde{K})\) is another solution.
	By the Snell-envelope representation,
	\(\widetilde{Y}\) is also a fixed point of \(\Psi\). The uniqueness
	of the fixed point implies that
	\[
	\widetilde{Y}=Y.
	\]
	Next, using the fact that $\widetilde{Y}=Y$ and writing the GRBSDE \eqref{basic equation}-(i) forwardly, we derive that 
	$$
	K-\widetilde{K}=M-\widetilde{M}.
	$$
	Hence, $M-\widetilde{M}$ is a predictable martingale with finite variation, and then it is equal to zero up to an evanescent set (see. e.g., Corollary.  3.16 in \cite[p. 32]{JacodShiryaev2013}). Therefore, we claim\footnote{This can also be obtained simply from the uniqueness of the Mertens decomposition described in Theorem \ref{main-result_indp} or in Proposition \ref{Snell.Envlp}.}
	\[
	\widetilde{M}=M,
	\qquad
	\widetilde{K}=K.
	\]
	
	Therefore, the reflected GBSDE \eqref{basic equation} admits a unique
	solution.
	
	This completes the proof of Theorem \ref{main result}.
\end{proof}

\section{Perspectives}
In this paper, we have studied generalized backward stochastic differential equations and generalized reflected backward stochastic
differential equations on a general filtered probability space, without assuming that the underlying filtration is quasi-left-continuous. A distinctive feature of our framework is that the equations are driven by a predictable, nondecreasing RCLL process $A$, which can include bounded predictable jumps. We have also allowed the lower obstacle to be an optional regulated process. There are still several natural extensions of this work that remain open and deserve further investigation.

\begin{enumerate}
	\item One initial direction is to relax the boundedness assumption placed on the driver $A$. It would be quite interesting to broaden the current findings to include unbounded or square-integrable increasing processes. Such an extension would require would need more precise a priori estimates, which might involve using localization techniques and incorporating extra integrability or structural conditions on the generator.

	\item  Another key problem to tackle is the consideration of generators that depend not just on the state variable $y$, but also on a martingale integrand or a jump component $z$. When $A$ includes jumps, this becomes quite tricky because the equation creates an implicit relationship at each jump time, involving the solution's jump value. As a result, we need to impose appropriate conditions for jump-wise solvability or invertibility. It would be especially valuable to develop conditions that are weaker than the traditional ones that connect the Lipschitz constants of the generator to the jump sizes of the RCLL driver $A$.
	
	\item Another interesting area to explore is the generalized doubly reflected BSDEs that come with two optional regulated barriers. Here, the state process is kept within the bounds of a lower and an upper obstacle, necessitating the use of two regulated reflecting processes. To prove the existence and uniqueness of solutions, we would need to adapt the Mokobodzki condition to fit a discontinuous stochastic driver $A$, as well as to barriers that have both left and right discontinuities. The equations we derive could also provide a probabilistic representation of nonlinear generalized Dynkin games across various filtrations.

	\item The findings can also be extended to random terminal horizons, as well as to generators that are either monotone or locally Lipschitz. Additionally, the solutions can meet $\mathbb{L}^p$-integrability conditions for values of $p$ that are not equal to 2.
	
	\item From an applied perspective,  the current framework can be used to explore the valuation and hedging of American contingent claims in markets that are incomplete and subject to defaults or other predictable events. In a progressively refined filtration, the process $A$ could serve as the predictable compensator for a default indicator or an integer-valued random measure. The Snell-envelope characterization introduced in this paper may lead the way for nonlinear pricing formulas for American options and other contracts that allow for early exercise, especially in situations with asymmetric information. Additionally, there seem to be exciting possibilities for further applications in dynamic risk measures, nonlinear expectations, and stochastic control problems.
	
	\item At last, diving into numerical approximation methods for the GRBSDEs we've discussed here is a key area for future research. A thorough examination of how quickly the modified penalization scheme converges, along with creating stable time-discretization algorithms, would really enhance the theoretical findings we've laid out in this study.
\end{enumerate}



	\appendix
	\section{Appendix}
	\subsection{It\^o's formula for processes with regulated trajectories}
	The classical It\^o formula for right-continuous semimartingales can be extended to a particular class of possibly non-right-continuous processes with regulated trajectories.
	\begin{theorem}\label{Ito's formula Theorem}
		Let $Y=\left(Y^1,Y^2,\cdots,Y^n\right)$ be an adapted $n$-dimensional process with regulated trajectories admitting the representation
		\begin{equation}
			Y_t=Y^{\ast}_t+\sum_{0 \leq s <t} \Delta_{+} Y_s,\quad \forall t \in [0,T],
			\label{form of the process for Ito}
		\end{equation}
		where $Y^{\ast}=\left(Y^{\ast,1},Y^{\ast,2},\cdots,Y^{\ast,n}\right)$ is an adapted $n$-dimensional RCLL semimartingale and $\sum_{s <t} \left|\Delta_{+}  Y_s\right|  <+\infty$  a.s. Let $F$ be
		a twice continuously differentiable function on $\mathbb{R}^n$. Then the process
		$\left(F\left(Y_t\right)\right)_{t \leq T}$ admits a representation of the form \eqref{form of the process for Ito}. Furthermore, almost surely, for each $n \geq 1$ and all $t \leq T$,
		\begin{equation*}
			\begin{split}
				F\left(Y_t\right)
				&=F\left(Y_0\right)+\sum_{k=1}^n \int_{0}^{t} D^k F\left(Y_{s-}\right)dY^{\ast,k}_s+\dfrac{1}{2}\sum_{k,l=1}^n  \int_{0}^{t} D^k D^l F\left(Y_{s-}\right) d\left[Y^{\ast,k},Y^{\ast,l}\right]^c_s\\
				& \quad+\sum_{0 < s \leq t}\left\{ F\left(Y_s\right)-F\left(Y_{s-}\right)-\sum_{k=1}^n  D^k F\left(Y_{s-}\right)\Delta_{-}Y^{\ast,k}_s\right\}
				+\sum_{0 \leq s < t} \left\{F\left(Y_{s+}\right)-F\left(Y_{s}\right)\right\},
			\end{split}
		\end{equation*}
		where $D^k$ denotes the differentiation operator with respect to the $k$-th coordinate, while $\left[\cdot,\cdot\right]^c$ stands for the continuous part of the quadratic variation $\left[\cdot,\cdot\right]$.
	\end{theorem}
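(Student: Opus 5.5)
The plan is to reduce to the classical It\^o formula for RCLL semimartingales by handling finitely many right jumps explicitly, and then pass to the limit. Write $Y^g_t:=\sum_{0\leq s<t}\Delta_+Y_s$, a left-continuous, adapted process of finite variation, so that $Y=Y^\ast+Y^g$. Since $Y^g$ is left-continuous, $Y^g_{s-}=Y^g_s$, hence $Y_{s-}=Y^\ast_{s-}+Y^g_s$ and $\Delta_-Y_s=\Delta_-Y^\ast_s$. This is why only the left jumps of $Y^\ast$ appear in the compensating sum.

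\emph{Step 1 (finitely many right jumps).} By Remark~\ref{existence} (Gal'chuk's theorem, applied componentwise), the right-jump times of $Y$ are exhausted by a sequence of stopping times $(\theta_m)$. For $N\geq 1$, let $Y^{g,N}$ be the sum of the right jumps taken only over $\theta_1,\dots,\theta_N$, and set $Y^N:=Y^\ast+Y^{g,N}$. Order these finitely many times increasingly as $\rho_1\leq\cdots\leq\rho_N$; this is a finite sort of stopping times, so the $\rho_j$ are again stopping times. On each stochastic interval $\rrbracket\rho_j,\rho_{j+1}\rrbracket$ the process $Y^N$ coincides with the RCLL semimartingale $Y^\ast+Y^{g,N}_{\rho_j+}$, a constant shift of $Y^\ast$. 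Applying the classical It\^o formula (Protter, Thm.~II.32) on each such interval gives the increment $F(Y^N_{\rho_{j+1}})-F(Y^N_{\rho_j+})$ in terms of the $dY^{\ast}$-integral, the continuous bracket term and the left-jump sum. Adding the right-jump increments $F(Y^N_{\rho_j+})-F(Y^N_{\rho_j})$ and telescoping yields exactly the asserted formula for $Y^N$, with a finite right-jump sum.

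\emph{Step 2 (limit $N\to\infty$).} Pathwise, $\sup_{t\leq T}|Y^N_t-Y_t|\leq\sum_{m>N}|\Delta_+Y_{\theta_m}|\to0$ a.s. by absolute summability, and the same bound holds for left and right limits. First localize: with the stopping times $\tau_k:=\inf\{t:|Y^\ast_t|+\sum_{s<t}|\Delta_+Y_s|\geq k\}$, all processes $Y^N_{-}$ and $Y_-$ stay in a fixed compact set on $\llbracket0,\tau_k\rrbracket$, where $F$, $DF$ and $D^2F$ are bounded and uniformly continuous. The four terms are then handled as follows.
\begin{enumerate}
\item For the stochastic integrals $\int D^kF(Y^N_{s-})\,dY^{\ast,k}_s$ I use the dominated convergence theorem for stochastic integrals: the integrands converge pointwise and are uniformly bounded by a predictable locally bounded process, so the integrals converge in ucp.
\item The continuous-bracket integrals converge by pathwise dominated convergence.
\item For the left-jump sum, Taylor's theorem gives the domination $|F(Y^N_s)-F(Y^N_{s-})-DF(Y^N_{s-})\Delta_-Y^\ast_s|\leq C_k|\Delta_-Y^\ast_s|^2$, which is summable since $\sum_s|\Delta Y^\ast_s|^2\leq[Y^\ast]_T$.
\item For the right-jump sum, the bound $|F(Y^N_{s+})-F(Y^N_s)|\leq C_k|\Delta_+Y_s|$ gives summable domination. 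Termwise convergence holds because at a right-jump time $\theta_m$ with $m\leq N$ the jump is exactly $F(Y^N_{s+})-F(Y^N_s)$ with $Y^N\to Y$, while the contribution of the times with $m>N$ is bounded by $C_k\sum_{m>N}|\Delta_+Y_{\theta_m}|\to0$.
\end{enumerate}
Letting $k\to\infty$ removes the localization.

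\emph{Step 3 (structure of $F(Y)$).} The first three groups of terms, the $dY^\ast$-integral, the continuous-bracket integral and the left-jump sum, define an RCLL adapted semimartingale $F(Y)^\ast$, since they form the classical It\^o decomposition. The last sum equals $\sum_{s<t}\Delta_+F(Y)_s$ and is absolutely convergent by the Lipschitz bound in Step 2. This shows that $F(Y)$ again admits the representation~\eqref{form of the process for Ito}.

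The main obstacle is Step 2. It requires a uniform compact-range bound that works simultaneously for all $N$ and for the limit, and a careful justification of ucp convergence of the stochastic integrals, where the integrand must be predictable, i.e.\ evaluated at $Y^N_{s-}$. Predictability of $Y^{N}_{-}$ holds because $Y^{g,N}$ is adapted and left-continuous, hence predictable, and $Y^\ast_-$ is predictable. I would also double-check in Step 1 that a right-jump time coinciding with a left-jump time of $Y^\ast$ is accounted for correctly: the left jump is counted in the $(\rho_j,\rho_{j+1}]$ piece ending at $\rho_{j+1}$, and the right jump only afterwards.
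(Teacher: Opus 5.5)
Your proof is correct, but it takes a different route from the paper. The paper gives no argument of its own: it defers to Theorem A.1 of Klimsiak et al.\ and Theorem A.3 of Grigorova et al., and so ultimately to the Gal'chuk--Lenglart change-of-variables formula for optional semimartingales.

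You instead derive the formula from the classical RCLL It\^o formula alone, in three steps:
\begin{enumerate}
\item You exhaust the right-jump set by stopping times. This is legitimate: $\Delta_+Y=Y^g_{+}-Y^g$ is optional with countable sections.
\item You truncate to finitely many right jumps, so that between consecutive $\rho_j$ the process $Y^N$ is an $\mathcal{F}_{\rho_j}$-measurable shift of $Y^\ast$, apply the classical formula piecewise, and telescope.
\item You pass to the limit, using the stochastic dominated convergence theorem for the $dY^\ast$-integral and dominated convergence for series for the two jump sums.
\end{enumerate}

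The citation is shorter and places the result inside the general theory of optional semimartingales. That theory also handles right-jump parts that are not absolutely summable. Your argument is elementary and self-contained. It also shows exactly where $\sum_{s<T}|\Delta_+Y_s|<\infty$ is used: for the uniform convergence $Y^N\to Y$, and for the uncompensated, absolutely convergent right-jump sum. It likewise makes clear why only $\Delta_-Y^\ast$ appears in the left-jump correction, since $Y^g$ is left-continuous.

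A few details to tidy:
\begin{itemize}
\item Take the graphs of the $\theta_m$ disjoint, or sum over the set $\{\theta_1,\dots,\theta_N\}$, to avoid double counting.
\item Set $\rho_0:=0$ so that a right jump at time $0$ is included.
\item Your $\tau_k$ controls $Y^N_{s-}$ on $]\!]0,\tau_k]\!]$ but not $Y^N_{\tau_k}$ itself, which enters the left-jump term at $s=\tau_k$. This is harmless. For each $\omega$ one has $\sup_t|Y^\ast_t|+\sum_{s<T}|\Delta_+Y_s|<\infty$, so the bracket integral and both jump sums can be dominated pathwise. Localization is then needed only for the stochastic integral, where $\sup_{|x|\le R_s}|DF(x)|$, with $R_s:=\sup_{u<s}|Y^\ast_u|+\sum_{u<s}|\Delta_+Y_u|$, is a predictable, locally bounded dominating process.
\end{itemize}
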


	\begin{proof}
		The proof follows from Theorem A.1 in \cite{klimsiak2019reflected}. we also refer to Theorem A.3 in \cite{Miryana}.
	\end{proof}

	\begin{corollary}\label{Application of Ito formula}
		Let $\left(Y_t\right)_{t \leq T}$ be a one-dimensional adapted process with regulated paths of the form \eqref{form of the process for Ito}. Then, for every $t \in [0,T]$, we have
		\begin{equation*}
			\begin{split}
				\mathcal{E}(\beta A)_{t} \left| Y_t \right|^2&=  \left| Y_0 \right|^2+ \int_{0}^{t} \dfrac{\beta}{1+\beta \Delta_- A_s} \mathcal{E}(\beta A)_{s} \left| Y_{s-} \right|^2 dA_s+2\int_{0}^{t}  \mathcal{E}(\beta A)_{s} Y_{s-} dY^{\ast}_s+\int_{0}^{t} \mathcal{E}(\beta A)_{s} d \left[Y^{\ast}\right]^c_s\\
				&\quad +\sum_{0 < s \leq t} \mathcal{E}(\beta A)_{s} \left| \Delta_{-} Y_{s} \right|^2+\sum_{0 \leq s < t} \mathcal{E}(\beta A)_{s} \left(\left| \Delta_{+} Y_{s}\right|^2+2Y_{s}\Delta_{+} Y_s \right).
			\end{split}
		\end{equation*}
	\end{corollary}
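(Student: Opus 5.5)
We need to prove Corollary \ref{Application of Ito formula}. The natural route is to apply Theorem \ref{Ito's formula Theorem} to the two-dimensional process $(\mathcal{E}(\beta A),Y)$ with $F(x,y)=xy^2$. Since $\mathcal{E}(\beta A)$ is RCLL, predictable and of finite variation, it has no right jumps, and its regulated decomposition is trivial: $\mathcal{E}(\beta A)^{\ast}=\mathcal{E}(\beta A)$. The pair therefore has the form \eqref{form of the process for Ito}, with right-continuous part $(\mathcal{E}(\beta A),Y^{\ast})$. We have $D_xF=y^2$, $D_yF=2xy$, $D_{yy}F=2x$, and $D_{xy}F=2y$. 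Since $\mathcal{E}(\beta A)$ has finite variation, $[\mathcal{E}(\beta A)]^c=[\mathcal{E}(\beta A),Y^{\ast}]^c=0$, so only the term $\frac12\cdot 2\mathcal{E}(\beta A)_{s-}\,d[Y^{\ast}]^c_s$ survives among the second-order terms. Because $[Y^\ast]^c$ is continuous, we may replace $\mathcal{E}(\beta A)_{s-}$ by $\mathcal{E}(\beta A)_s$ there. Note also that $\mathcal{E}(\beta A)_0=1$.

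The first-order terms are $\int_0^t|Y_{s-}|^2\,d\mathcal{E}(\beta A)_s+2\int_0^t\mathcal{E}(\beta A)_{s-}Y_{s-}\,dY^\ast_s$. The first becomes $\int_0^t\beta\mathcal{E}(\beta A)_{s-}|Y_{s-}|^2\,dA_s$, and \eqref{matla} turns it into the announced $\int_0^t\frac{\beta}{1+\beta\Delta_-A_s}\mathcal{E}(\beta A)_s|Y_{s-}|^2\,dA_s$. The right-jump sum gives $\sum_{0\le s<t}\mathcal{E}(\beta A)_s\bigl(|Y_{s+}|^2-|Y_s|^2\bigr)$, using $\mathcal{E}(\beta A)_{s+}=\mathcal{E}(\beta A)_s$. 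Since $Y_{s+}=Y_s+\Delta_+Y_s$, this equals $\mathcal{E}(\beta A)_s(|\Delta_+Y_s|^2+2Y_s\Delta_+Y_s)$, exactly the last term of the statement.

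The main bookkeeping step is the left-jump sum, together with converting $\mathcal{E}(\beta A)_{s-}\,dY^\ast_s$ into $\mathcal{E}(\beta A)_s\,dY^\ast_s$. Writing $E=\mathcal{E}(\beta A)$, $a=\Delta_-E_s$ and $b=\Delta_-Y_s$, the left-jump term is
\[
E_sY_s^2-E_{s-}Y_{s-}^2-Y_{s-}^2a-2E_{s-}Y_{s-}b
=E_sb^2+2aY_{s-}b .
\]
The identity uses $E_s=E_{s-}+a$ and $Y_s=Y_{s-}+b$, and needs $\Delta_-Y_s=\Delta_-Y^\ast_s$, which holds because the right-jump part is left-continuous. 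Separately, $2\int_0^t E_{s-}Y_{s-}\,dY^\ast_s=2\int_0^tE_sY_{s-}\,dY^\ast_s-2\sum_{0<s\le t}\Delta_-E_s\,Y_{s-}\Delta_-Y^\ast_s$. The correction term cancels exactly against the $2aY_{s-}b$ part of the left-jump term, leaving $\sum_{0<s\le t}E_s|\Delta_-Y_s|^2$ plus $2\int_0^tE_sY_{s-}\,dY^\ast_s$.

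This rewriting of the stochastic integral against $E_s$ instead of $E_{s-}$ needs justification. $E$ is predictable and locally bounded (Remark \ref{rmq4}-(i)), so $\int E_sY_{s-}\,dY^\ast_s$ is well defined. The difference $(E_s-E_{s-})Y_{s-}$ is nonzero only on the countably many predictable jump times of $A$, and integrating it against $dY^\ast$ yields the pure-jump sum $\sum\Delta_-E_s\,Y_{s-}\Delta_-Y^\ast_s$. This is the same computation as in \eqref{Ito1}, via $d[X,V]=\Delta_-V\,dX$ from Remark \ref{rmq1}, where absolute convergence of the sum follows from finite variation of $E$ and boundedness of $Y_-$ on compacts. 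Collecting the terms gives the stated formula.
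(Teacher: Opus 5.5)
Your proposal is correct and follows the same route as the paper. Both apply Theorem \ref{Ito's formula Theorem} to $(\mathcal{E}(\beta A),Y)$ with $F(x,y)=x|y|^2$, rewrite the $dA$-term via \eqref{matla}, use $\mathcal{E}(\beta A)_{s+}=\mathcal{E}(\beta A)_s$ for the right-jump sum, and absorb the left-jump correction $2\sum_{0<s\le t}\Delta_-\mathcal{E}(\beta A)_s\,Y_{s-}\Delta_-Y_s$ into the stochastic integral (via Remark \ref{rmq1} and $\Delta_-Y=\Delta_-Y^{\ast}$) to replace $\mathcal{E}(\beta A)_{s-}$ by $\mathcal{E}(\beta A)_s$. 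Your version is slightly more careful than the paper's: you keep the factor $\beta$ in $d\mathcal{E}(\beta A)_s=\beta\,\mathcal{E}(\beta A)_{s-}\,dA_s$, which is dropped in the paper's intermediate display \eqref{mm1}, and you justify replacing $\mathcal{E}(\beta A)_{s-}$ by $\mathcal{E}(\beta A)_s$ in the $d[Y^{\ast}]^c$-integral.
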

	
	\begin{proof}
		Set $n=2$ and let $F:\mathbb{R}^2 \to \mathbb{R}$ be the twice continuously differentiable function on $\mathbb{R}^2$ defined by $F(x,y)=x \cdot \left| y\right|^2$.
		Applying Theorem \ref{Ito's formula Theorem} to the processes $Y^1:=\left(\mathcal{E}(\beta A)_{t}\right)_{t \leq T}$\footnote{We recall that this process is right-continuous and increasing. Consequently, $\mathcal{E}(\beta A)_{t}=\mathcal{E}(\beta A)_{t+}$ and $d [\mathcal{E}(\beta A)_{\cdot},Y^\ast]_t=\Delta_-\mathcal{E}(\beta A)_{t} dY^\ast_t$; see Remark \ref{rmq1}.} and $Y^2:=\left(Y_t\right)_{t \leq T}$ yields
		\begin{equation}\label{mm1}
			\begin{split}
				\mathcal{E}(\beta A)_{t} \left| Y_t \right|^2&=  \left| Y_0 \right|^2+ \int_{0}^{t}  \mathcal{E}(\beta A)_{s-} \left| Y_{s-} \right|^2 dA_s+2\int_{0}^{t} \mathcal{E}(\beta A)_{s-} Y_{s-} dY^{\ast}_s+\int_{0}^{t} \mathcal{E}(\beta A)_{s} d \left[Y^{\ast}\right]^c_s\\
				&\quad+\sum_{0 < s \leq t}\left\{ \mathcal{E}(\beta A)_{s} \left| Y_s \right|^2-\mathcal{E}(\beta A)_{s-} \left| Y_{s-} \right|^2-\beta\mathcal{E}(\beta A)_{s-} \left| Y_{s-} \right|^2 \Delta_- A_s-2Y_{s-} \mathcal{E}(\beta A)_{s-} \Delta_- Y_s \right\}\\
				&\quad+\sum_{0 \leq s < t} \mathcal{E}(\beta A)_{s} \left\{|Y_{s+}|^2-|Y_{s}|^2\right\}.
			\end{split}
		\end{equation}
		
		By \eqref{matla},
		$
		\mathcal{E}(\beta A)_{s}
		=
		\mathcal{E}(\beta A)_{s-}\left(1+\beta \Delta A_s\right)
		$.
		Hence,
		\begin{equation*}
			\begin{split}
				&\sum_{0 < s \leq t}\left\{ \mathcal{E}(\beta A)_{s} \left| Y_s \right|^2-\mathcal{E}(\beta A)_{s-} \left| Y_{s-} \right|^2-\beta\mathcal{E}(\beta A)_{s-} \left| Y_{s-} \right|^2 \Delta_- A_s \right\}\\
				&=\sum_{0 < s \leq t}\mathcal{E}(\beta A)_{s}\left\{  \left| Y_s \right|^2-\left| Y_{s-} \right|^2\right\}\\
				&=\sum_{0 < s \leq t} \mathcal{E}(\beta A)_{s} \left\{\left| \Delta_{-} Y_{s} \right|^2+2Y_{s-}  \Delta_- Y_s\right\}.
			\end{split}
		\end{equation*}
		Consequently,
		\begin{equation*}
			\begin{split}
				&\sum_{0 < s \leq t}\left\{ \mathcal{E}(\beta A)_{s} \left| Y_s \right|^2-\mathcal{E}(\beta A)_{s-} \left| Y_{s-} \right|^2-\beta\mathcal{E}(\beta A)_{s-} \left| Y_{s-} \right|^2 \Delta_- A_s-2Y_{s-} \mathcal{E}(\beta A)_{s-} \Delta_- Y_s \right\}\\
				&=\sum_{0 < s \leq t}  \left\{\mathcal{E}(\beta A)_{s}\left| \Delta_{-} Y_{s} \right|^2+2Y_{s-}  \Delta_- \mathcal{E}(\beta A)_{s} \Delta_- Y_s\right\}.
			\end{split}
		\end{equation*}
		
		On the other hand, Remark \ref{rmq1}, together with the identity $\Delta_- Y=\Delta_- Y^\ast$, gives
		$$
		2\sum_{0 < s \leq t} Y_{s-}  \Delta_- \mathcal{E}(\beta A)_{s} \Delta_- Y_s=2\int_{0}^{t}Y_{s-}  \Delta_- \mathcal{E}(\beta A)_{s}  dY^\ast_s.
		$$
		It follows that
		\begin{equation}\label{mm2}
			\begin{split}
				&\sum_{0 < s \leq t}\left\{ \mathcal{E}(\beta A)_{s} \left| Y_s \right|^2-\mathcal{E}(\beta A)_{s-} \left| Y_{s-} \right|^2-\beta\mathcal{E}(\beta A)_{s-} \left| Y_{s-} \right|^2 \Delta_- A_s-2Y_{s-} \mathcal{E}(\beta A)_{s-} \Delta_- Y_s \right\}\\
				&=\sum_{0 < s \leq t}  \mathcal{E}(\beta A)_{s}\left| \Delta_{-} Y_{s} \right|^2+2\int_{0}^{t}Y_{s-}  \Delta_- \mathcal{E}(\beta A)_{s}  dY^\ast_s.
			\end{split}
		\end{equation}
		
		Similarly, using the identity
		$\left|Y_{s+}\right|^2-\left|Y_s\right|^2
		=\left|\Delta_+Y_s\right|^2+2Y_s\Delta_+Y_s$, we obtain
		\begin{equation}\label{mm3}
			\sum_{0 \leq s < t} \mathcal{E}(\beta A)_{s} \left\{|Y_{s+}|^2-|Y_{s}|^2\right\}=\sum_{0 \leq s < t} \mathcal{E}(\beta A)_{s} \left(\left| \Delta_{+} Y_{s}\right|^2+2Y_{s}\Delta_{+} Y_s \right).
		\end{equation}
		
		Substituting \eqref{mm2} and \eqref{mm3} into \eqref{mm1} yields
		\begin{equation*}
			\begin{split}
				\mathcal{E}(\beta A)_{t} \left| Y_t \right|^2&=  \left| Y_0 \right|^2+ \int_{0}^{t}  \dfrac{\beta}{1+\beta \Delta_- A_s} \mathcal{E}(\beta A)_{s} \left| Y_{s-} \right|^2 dA_s+2\int_{0}^{t} \mathcal{E}(\beta A)_{s} Y_{s-} dY^{\ast}_s+\int_{0}^{t} \mathcal{E}(\beta A)_{s} d \left[Y^{\ast}\right]^c_s\\
				&\quad+\sum_{0 < s \leq t}  \mathcal{E}(\beta A)_{s}\left| \Delta_{-} Y_{s} \right|^2+\sum_{0 \leq s < t} \mathcal{E}(\beta A)_{s} \left(\left| \Delta_{+} Y_{s}\right|^2+2Y_{s}\Delta_{+} Y_s \right).
			\end{split}
		\end{equation*}
		This completes the proof.
	\end{proof}

Suppose that the right-continuous component $Y^\ast$ of the process $Y$ defined in \eqref{form of the process for Ito} solves the GBSDE \eqref{GBSDE}, with martingale component $M$, associated with the data $(\xi_T,\mathfrak{f}(\cdot),A)$, where the driver does not depend on $y$. Then,
\begin{equation*}
	\begin{split}
		&\int_{0}^{t} \mathcal{E}(\beta A)_{s} d \left[Y^{\ast}\right]^c_s+\sum_{0 < s \leq t}  \mathcal{E}(\beta A)_{s}\left| \Delta_{-} Y_{s} \right|^2\\
		&=\int_{0}^{t} \mathcal{E}(\beta A)_{s} d \left[M\right]_s+\sum_{0 < s \leq t}|\mathfrak{f}(s)|^2 |\Delta_- A_s|^2-2\int_{0}^{t}  \mathfrak{f}(s)\Delta_- A_s dM_s 
	\end{split}
\end{equation*}
Consequently, the formula established in Corollary \ref{Application of Ito formula} can be written, for any $\tau,\eta \in \mathcal{T}_{0,T}$ satisfying $\eta \leq \tau$ a.s., as
\begin{equation*}
	\begin{split}
		\mathcal{E}(\beta A)_{\tau} \left| Y_\tau \right|^2&=  \mathcal{E}(\beta A)_{\eta}\left| Y_\eta \right|^2+ \int_{\eta}^{\tau} \dfrac{\beta}{1+\beta \Delta_- A_s} \mathcal{E}(\beta A)_{s} \left| Y_{s-} \right|^2 dA_s-2\int_{\eta}^{\tau}  \mathcal{E}(\beta A)_{s} Y_{s-} \mathfrak{f}(s) d A_s\\
		&\quad+2\int_{\eta}^{\tau}  \mathcal{E}(\beta A)_{s} Y_{s-}  d M_s +\int_{\eta}^{\tau} \mathcal{E}(\beta A)_{s} d \left[M\right]_s +\sum_{\eta < s \leq \tau}\mathcal{E}(\beta A)_{s}|\mathfrak{f}(s)|^2 |\Delta_- A_s|^2\\
		&\quad -2\int_{\eta}^{\tau} \mathcal{E}(\beta A)_{s} \mathfrak{f}(s)\Delta_- A_s dM_s+\sum_{\eta \leq s < \tau} \mathcal{E}(\beta A)_{s} \left(\left| \Delta_{+} Y_{s}\right|^2+2Y_{s}\Delta_{+} Y_s \right).
	\end{split}
\end{equation*}

\section*{Disclosure statement}
No potential conflict of interest was reported by the authors.

\section*{Funding}
No funding was received.


	
\end{document}